\numberwithin{equation}{section}
\newcommand{\1}{\mathds{1}}
\newcommand{\A}{\mathscr{A}}
\newcommand{\Co}{\mathscr{C}}
\newcommand{\C}{\mathds{C}}
\newcommand{\D}{\mathds{D}}
\renewcommand{\d}{\mathrm{d}}
\newcommand{\E}{\mathds{E}}
\newcommand{\g}{\vec{\gamma}}
\newcommand{\G}{\mathrm{G}}
\renewcommand{\i}{\mathbf{i}}
\renewcommand{\O}{\mathcal{O}}
\renewcommand{\P}{\mathds{P}}
\newcommand{\M}{\mathrm{M}}
\newcommand{\N}{\mathbb{N}}
\newcommand{\R}{\mathds{R}}
\newcommand{\T}{\mathscr{T}}
\newcommand{\X}{\mathrm{X}}
\newcommand{\dbar}{\overline{\partial}}
\newcommand{\supp}{\operatorname{supp}}
\newtheorem{theorem}{Theorem}[section]
\newtheorem{assumption}[theorem]{Assumptions}
\newtheorem{convention}[theorem]{Conventions}
\newtheorem{proposition}[theorem]{Proposition}
\newtheorem{corollary}[theorem]{Corollary}
\newtheorem{lemma}[theorem]{Lemma}
 \newtheorem{remark}[theorem]{Remark}
\title{
\vspace*{-2cm}
Maximum of the characteristic polynomial of the Ginibre ensemble}
\date{\today}
\author{Gaultier Lambert\thanks{University of Zurich, Switzerland. E-mail: \href{mailto:gaultier.lambert@math.uzh.ch}{gaultier.lambert@math.uzh.ch} }}
\begin{document}

\maketitle



\begin{abstract}{\normalsize
We compute the leading asymptotics for the maximum  of the (centered) logarithm of the absolute value of the characteristic polynomial, denoted $\Psi_N$,  of the Ginibre ensemble as the dimension of the random matrix $N$ tends to $+\infty$.
The method relies on the log-correlated structure of the field $\Psi_N$ and we obtain the lower--bound for the maximum by constructing a family of Gaussian multiplicative chaos measures associated with certain regularization of $\Psi_N$ at small mesoscopic scales. 
We also obtain the leading  asymptotics for the dimensions of the sets of thick points and verify that they are consistent with the predictions coming from the Gaussian Free Field.
A key technical input is the approach from \cite{AHM15} to derive the necessary asymptotics, as well as the results from \cite{WW19}. 
}

\end{abstract}

\section{Introduction and main results}

The Ginibre ensemble is the canonical example of a non--normal random matrix. It consists of a $N\times N$ matrix filled with independent complex Gaussian random variables of variance $1/N$,  \cite{Ginibre65}. 
It is well--known that the eigenvalues $(\lambda_1, \dots , \lambda_N)$ of a Ginibre matrix are asymptotically uniformly distributed inside the unit disk $\mathbb{D} = \{ z\in \mathbb{C} : |z| < 1\}$ in the complex plane -- this is known as the circular law \cite{BZ98, BC14}.
The Ginibre eigenvalues have the same law as the particles in a one component two--dimensional Coulomb gas confined by the potential  $Q(x) = |x|^2/2$ at a specific temperature, see \cite{Serfaty18}.
That is, the joint law of the eigenvalues is given by
$\d \P_N  \propto e^{- \mathrm{H}_N(x)} {\textstyle\prod_{j=1}^N} \d^2 x_j$
where the energy of a configuration $x\in\C^N$ is
\begin{equation} \label{Hamiltonian}
 \mathrm{H}_N(x) : = \sum_{\substack{ j, k =1 , \dots , N \\ j\neq k}} \log |x_j-x_k|^{-1} + 2 N \sum_{j=1, \dots ,N} Q(x_j) , 
\end{equation}
and  $\d^2x$ denotes the Lebesgue measure on $\C$.
Moreover, these eigenvalues form a determinantal point process on $\C$ with a correlation kernel 
\begin{equation} \label{Gkernel}
K_N(x,z) =  {\textstyle\sum_{j=0}^{N-1}} \frac{x^j \overline{z}^j}{j!}  N^{j+1} e^{-N |x|^2/2 - N|z|^2/2}.
\end{equation}
This means that all the correlation functions (or marginals $\P_{N,n}$) of this point process are given by 
\begin{equation} \label{Ginibre}
\P_{N,n}[\d x_1, \cdots , \d x_n]  =  \frac{1}{(N)_n} \det\big[K_N(x_i,x_j) \big]_{i,j =1}^n  \tfrac{\d^2x_1}{\pi} \cdots \tfrac{\d^2x_n}{\pi} , 
\qquad\text{for } n=1, \dots , N,
\end{equation}
where $(N)_n = N(N-1)\cdots (N-n+1)$. 
We refer to \cite[Chapter 4]{HKPV09} for an introduction to determinantal processes and to \cite[Theorem~4.3.10]{HKPV09} for a derivation of the Ginibre correlation kernel.

In this article we are interested in the asymptotics of the modulus of the characteristic polynomial $z \in \C \mapsto \prod_{j=1}^N|z-\lambda_j|$ of the Ginibre ensemble and in particular on the maximum size of its fluctuations. Before stating our main result, we need to review some basic properties of the Ginibre eigenvalues process. 

\medskip

First, it follows from a classical result in potential theory that the \emph{equilibrium measure} which describes the limit of the empirical measure $\frac{1}{N} \sum_{j=1}^N \delta_{\lambda_j}$ is indeed the circular law: $\sigma(\d x) = \frac{1}{\pi} \1_{\D} \d^2 x$, see \cite[Section 3.2]{Serfaty18}. 
Let $(x)_+ = x \vee 0 $ for $x\in\R$. 
This can be deduced from the fact that the logarithmic potential of the circular law 
\begin{equation} \label{varphi}
\varphi(z)  : = \int \log|z-x| \sigma(\d x)
= (\log|z|)_+ - \frac{(1- |z|^2)_+}{2} ,
\end{equation}
satisfies the condition 
\begin{equation} \label{EL}
 \varphi(z) = Q(z) - 1/2  \qquad\text{for all $z\in\D$}.
 \end{equation}

\medskip
 
 Then, Rider--Vir\`ag \cite{RV07} showed that the fluctuations of the empirical measure of the Ginibre eigenvalues around the circular law are described by a Gaussian noise. This result was generalized to other ensembles of random matrices in \cite{AHM11, AHM15}, as well as to two--dimensional Coulomb gases at an arbitrary positive temperature in \cite{BBNY19,LS18}. 
Let us define
\begin{equation} \label{X}
\X(\d x) : = {\textstyle  \sum_{j=1}^N \delta_{\lambda_j}} - N \sigma(\d x) . 
\end{equation}
 This measure describes the fluctuations of the Ginibre eigenvalues and, by \cite[Theorem~1.1]{RV07}, for any function $f\in \Co^2(\C)$ with at most exponential growth, we have as $N\to+\infty$, 
 \begin{equation} \label{clt}
 \X(f) =  {\textstyle  \sum_{j=1}^N f(\lambda_j)} - N \int f(x)\sigma(\d x)   
\ \overset{\rm law}{\longrightarrow}\ \mathscr{N}\left(0, \Sigma^2(f) \right) . 
 \end{equation}
If $f$ has compact support inside the support of the equilibrium measure, then the asymptotic variance is given by 
 \begin{equation} \label{Sigma}
 \Sigma^2(f)=  \int  \dbar f (x)  \partial f(x) \sigma(\d x) . 
 \end{equation}
 
The object that we study in this article is the \emph{centered} logarithm of the Ginibre characteristic polynomial: 
\begin{equation} \label{logcharpoly}
\Psi_N(z) : = \log\left({\textstyle\prod_{j=1}^N } |z-\lambda_j| \right) - N \varphi(z) .  
\end{equation}
See Figure~\ref{fig:logcharpoly} below for a sample of the random function $\Psi_N(z)$.
Note that it follows from the convergence of the empirical measure to the circular law that for any $z\in\C$, we have in probability as $N\to+\infty$,
\[
\frac 1N \log\left({\textstyle\prod_{j=1}^N } |z-\lambda_j| \right) \to \varphi(z) ,  
\]
so that the second term on the RHS of \eqref{logcharpoly}  is necessary to have the field $\Psi_N$ asymptotically centered. 
In fact, it follows from the result of Webb--Wong \cite{WW19} that  $\E_N[\Psi_N(z)] \to 1/4$ for all $z\in\D$ as $N\to+\infty$. 
Moreover, if we interpret $\Psi_N$ as a random generalized function, then the central limit theorem \eqref{clt} implies that $\Psi_N$ converges in distribution to the Gaussian free field (GFF)\footnote{We briefly review the definition of the GFF in Section~\ref{sect:gmc}. } on~$\D$ with free boundary conditions, see \cite[Corollary 1.2]{RV07} and also \cite{AHM15, Webb} for further details.
Even though the GFF is a random distribution, it can be though of as a \emph{random surface} which corresponds to the two--dimensional analogue of Brownian motion, \cite{Sheffield07}. 
The convergence result of Rider--Vir\`ag  indicates that we can think of the field $\Psi_N$ as an approximation of the GFF in $\D$. 
The main feature of the GFF is that it is a log-correlated Gaussian process on $\C$. 
This log-correlated structure is already visible for the absolute value of the Ginibre  characteristic polynomial as it is possible to show that for any $z, x\in\D$, 
\begin{equation} \label{Psicov}
\E_N\left[ \Psi_N(z) \Psi_N(x) \right] =  \frac{1}{2} \log\left(\sqrt{N} \wedge |x-z|^{-1}\right) + \O(1) ,
\end{equation}
as $N\to+\infty$.
By analogy with the GFF and other log-correlated fields, we can make the following prediction regarding the maximum of the field  $\Psi_N$.
We have as $N\to+\infty$, 
\begin{equation} \label{max}
\max_{z\in\D} \Psi_N(z)  = \frac{\log N }{\sqrt{2}}  - \frac{3 \log \log N}{4\sqrt{2}} +\xi_N ,
\end{equation}
where the random variable $\xi_N$ is expected to converge in distribution. 
Analogous predictions have been made for other  log-correlated fields associated with \emph{normal} random matrices.
For instance, Fyodorov--Keating \cite{FK14} first conjectured the asymptotics of the maximum of the logarithm of the absolute value of the characteristic polynomial of the circular unitary ensemble\footnote{A random $N\times N$  matrix sampled from the Haar measure on the unitary group.} (CUE), including the distribution of the error term and Fyodorov--Simm \cite{FS16} made analogous prediction for the Gaussian Unitary Ensemble\footnote{A random $N\times N$ Hermitian matrix with independent Gaussian entries suitably normalized.} (GUE) . 

\medskip 

The main goal of this article is to verify the leading order in the asymptotic expansion \eqref{max}. More precisely, we prove the following result:

\begin{theorem}  \label{thm:Ginibre}
For any $0<r<1$ and any $\epsilon>0$, it holds
\[
\lim_{N\to+\infty} \P_N\left[ \frac{1-\epsilon}{\sqrt{2}} \log N \le  \max_{|z| \le r} \Psi_N(z) \le   \frac{1+\epsilon}{\sqrt{2}} \log N  \right] =1 . 
\]
\end{theorem}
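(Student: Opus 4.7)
The plan is to prove the two inequalities separately, both by extracting approximate-Gaussian concentration for $\Psi_N$ at a single point and then globalising. The common analytic input is sharp asymptotics of the Laplace transform
\[
\E_N\bigl[e^{\gamma \Psi_N(z)}\bigr] = \E_N\Bigl[\textstyle\prod_{j=1}^N |z-\lambda_j|^\gamma\Bigr]\, e^{-\gamma N \varphi(z)},
\]
which is a planar Coulomb-gas partition function with an inserted charge of strength $\gamma/2$ at $z$ and is precisely the type of object handled by the AHM15 potential-theoretic machinery. The guiding estimate is that, for $\gamma$ in a compact subset of $(-2\sqrt{2},2\sqrt{2})$ and $|z|\le r$,
\[
\E_N\bigl[e^{\gamma \Psi_N(z)}\bigr] = \exp\bigl(\gamma^2 (\log N)/8 + \O(1)\bigr),
\]
consistent with $\Psi_N(z)$ being approximately Gaussian of variance $\tfrac{1}{4}\log N$, in line with \eqref{Psicov}.

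For the upper bound I would fix $\gamma$ slightly below $2\sqrt{2}$ and apply Chernoff's inequality together with the above estimate to obtain
\[
\P_N[\Psi_N(z) > t] \le \exp\bigl(-\gamma t + \gamma^2 (\log N)/8 + \O(1)\bigr),
\]
which at $t = \frac{1+\epsilon}{\sqrt 2}\log N$ is bounded by $N^{-1-c(\epsilon)}$ for some $c(\epsilon)>0$. A net $\mathcal N \subset \{|z|\le r\}$ at scale $N^{-1/2}$ (of cardinality $\O(N)$) together with a union bound then handles pointwise exceedances, and a routine regularity estimate for $\Psi_N$—for example via a high-probability control of the minimum eigenvalue gap, or a direct gradient bound on a mollification—transfers the bound from $\mathcal N$ to all of $\{|z|\le r\}$.

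For the lower bound I would follow the GMC route outlined in the abstract. Fix a mesoscopic scale $\eta = N^{-1/2 + \kappa}$ with $\kappa>0$ small, let $\Psi_N^\eta$ be a smooth mollification of $\Psi_N$ at scale $\eta$, pick a subcritical coupling $\gamma \in (0, 2\sqrt 2)$, and introduce the random density
\[
M_N^\gamma(\d z) = \exp\bigl(\gamma \Psi_N^\eta(z) - \tfrac{\gamma^2}{2}\E_N[\Psi_N^\eta(z)^2]\bigr)\, \1_{|z|\le r}\, \d^2 z.
\]
The AHM-type asymptotics yield $\E_N[M_N^\gamma(B)] \asymp |B|$ for any ball $B \subset \{|z|<r\}$, while an analogous two-point asymptotic, whose pair integral is controlled by \eqref{Psicov}, produces a matching upper bound on the second moment. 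A Paley-Zygmund argument then gives $M_N^\gamma(B) \ge c$ with probability bounded away from $0$, and on this event the concentration of $M_N^\gamma$ on its $\gamma$-thick points forces the existence of some $z \in B$ with $\Psi_N(z) \ge \gamma \cdot \tfrac14\log N - o(\log N)$; sending $\gamma \uparrow 2\sqrt 2$ recovers the full leading constant $1/\sqrt 2$.

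The main obstacle is that reaching this leading constant requires $\gamma$ strictly above the $L^2$-phase $\gamma > 2$, in which range the plain second-moment integral diverges at short distances. The second-moment method must therefore be carried out on a truncated measure that restricts to points $z$ whose coarse-grained profile $(\Psi_N^{\eta'}(z))_{\eta'\ge \eta}$ stays within an appropriate multi-scale barrier around its expected ``ballistic'' trajectory, so that the truncated second moment becomes comparable with the square of the first moment. Producing the requisite exponential moment asymptotics of AHM15 type uniformly across the whole family of mesoscopic scales, and upgrading the Paley-Zygmund lower bound from positive to overwhelming probability, is where the bulk of the technical work lies.
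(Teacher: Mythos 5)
Your two-sided strategy is sound in outline but departs from the paper at both ends, and the upper-bound half as you sketched it has a real gap. For the lower bound, the paper also works through a mesoscopic regularization of $\Psi_N$ and GMC-type measures, and the crucial technical input is exactly what you identify: uniform mod-Gaussian asymptotics for $\E_N\bigl[e^{\X(g_N^{\g,\vec z})}\bigr]$ at scales $\epsilon(N)=N^{-1/2+\alpha}$, obtained via Ward identities and local Bergman-kernel expansions in the AHM15 style (this is Proposition~\ref{thm:exp} and the bulk of Sections~\ref{sect:clt}--\ref{sect:approx}). However, the paper does \emph{not} carry out the truncated/barrier second-moment argument that you rightly flag as necessary to push $\gamma$ past the $L^2$-phase; instead it feeds the mod-Gaussian estimates into the abstract framework of \cite{LOS18} (Berestycki's elementary GMC construction, which handles the entire subcritical range internally) to deduce convergence in distribution of $\mu_N^\gamma$ to a nondegenerate GMC limit for all $\gamma<\sqrt 8$ (Theorem~\ref{thm:gmc}), and then invokes \cite[Theorem 3.4]{CFLW} to convert GMC convergence directly into a high-probability lower bound on the maximum. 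This sidesteps both the multi-scale barrier construction and the ``upgrade Paley--Zygmund to overwhelming probability'' step you mention; you get these for free from the distributional convergence.

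For the upper bound, your proposed Chernoff plus union-bound-over-a-net argument has the right first step---the pointwise tail bound $\P_N[\Psi_N(z)>t]\le\exp(-\gamma t+\tfrac{\gamma^2}{8}\log N+\O(1))$ for $\gamma$ up to about $2\sqrt2$ follows from Webb--Wong and gives $N^{-1-2\epsilon}$ at the target level---but the transfer from the net $\mathcal N$ to all of $\overline{\D_r}$ is not ``routine.'' The field $\Psi_N$ has logarithmic singularities at the eigenvalues: the gradient of $\log P_N$ at a point $z$ at distance $d$ from the nearest eigenvalue is of order $1/d$, and a crude gradient bound at the typical spacing $d\sim N^{-1/2}$ contributes an error of order $\sqrt N$, not $o(\log N)$. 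A mollified gradient bound controls only the mollified field, and comparing $\max\Psi_N$ to its mollification is precisely the nontrivial step. The paper resolves this via subharmonicity: $\log P_N$ is subharmonic, so $\Psi_N$ at its maximizer $x_*$ is at most the average of $\Psi_N$ over a small disk around $x_*$ (plus an explicit $N\delta^2/4$ correction from the curvature of $\varphi$), and if $\Psi_N(x_*)$ were too large this would force the disk to contain a set of thick points of positive Lebesgue measure, which is excluded by a Markov bound on $\E_N[|\T_N^\beta|]$ from the same moment estimate. This converts the pointwise Chernoff bound into a global one without any net or regularity estimate, and is the cleaner route; if you want to salvage the net argument you would need to exclude small disks around eigenvalues and control the remaining gradient, which is substantially more delicate.
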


It is worth pointing out that like many other asymptotic properties of the eigenvalues of random matrices, we expect  the results of  Theorem~\ref{thm:Ginibre}, as well as the prediction \eqref{max} modulo the limiting distribution of $\xi_N$, and Theorem~\ref{thm:TP} below to be \emph{universal}. This means that these results should hold for other random normal matrix ensembles with a different confining potential $Q$ as well as for other non--Hermitian Wigner ensembles under reasonable assumptions on the entries of the random matrix. 
In the remainder of this section, we review the context and most relevant results related to Theorem~\ref{thm:Ginibre}, and we provide several  motivations to study the characteristic polynomial of the Ginibre ensemble.

\subsection{Comments on Theorem~\ref{thm:Ginibre} and further results}

The study of characteristic polynomials  for different ensembles of random matrices is an interesting and active topic because of its connections to  several problems in diverse areas of mathematics. 
In particular, there are the analogy between the  logarithm of the absolute value of the characteristic polynomial of the CUE and the Riemann $\zeta$-function \cite{KS00}, as well as the connections with Toeplitz or Hankel determinant with Fisher--Hartwig symbols, e.g. \cite{Krasovsky07, DIK14, Charlier19, CG19}. 
Of essential importance is also the connection between characteristic polynomial of random matrices, log-correlated fields and the theory of Gaussian multiplicative chaos \cite{HKO01, FK14}. 
 This connection has been used in several recent works to compute the asymptotics of the maximum of the logarithm of the characteristic polynomial for various ensembles of random matrices. 
For the CUE, a result analogous to Theorem~\ref{thm:Ginibre}  was first obtained by Arguin--Belius--Bourgade \cite{ABB17}.  Then, the correction term was computed by Paquette--Zeitouni \cite{PZ18} and the counterpart of the conjecture \eqref{max} was established  for the circular $\beta$-ensembles for general $\beta>0$ by Chhaibi--Madaule--Najnudel\footnote{They obtain tightness of the appropriately centered maximum for both real and imaginary part of the logarithm of the characteristic polynomial. See also \cite{L19} for the asymptotics of the measures of thick points  by a different approach.} \cite{CMN18}. 
For the characteristic polynomial  of the GUE, as well as other Hermitian unitary invariant ensembles, the law of large numbers for the maximum of the absolute value of the characteristic polynomial was obtained in \cite{LP19}. 
Cook and Zeitouni \cite{CZ} also obtained a law of large numbers for the maximum of the characteristic polynomial of a random permutation matrix, in which case their result does not match with the prediction from Gaussian log-correlated field because of arithmetic effects.
These results rely on the log-correlated structure of characteristic polynomials  and proceed by analogy with the case of branching random walk using a modified second moment method, \cite{Kistler15}.
This method has also been successful to compute the asymptotics of the Riemann $\zeta$-function in a random 
interval of the critical line, see \cite{SW, Najnudel18, ABBRS19, Harper_b}. 
Further recent results on the deep connections between log-correlated fields, Gaussian multiplicative chaos and characteristic polynomials of $\beta$-ensembles can be found in \cite{CFLW,CN,LP}
In particular,  we prove in~\cite{CFLW} the counterpart of  Theorem~\ref{thm:Ginibre}  for the imaginary part of the characteristic polynomial of  a large class of Hermitian unitary invariant ensembles and show that this implies \emph{optimal rigidity bounds}  for the eigenvalues.
Likewise, by adapting the proof of the upper--bound in Theorem~\ref{thm:Ginibre}, we can obtain  \emph{precise rigidity estimates} for  linear statistics of the Ginibre ensemble in the spirit of \cite[Theorem 1.2]{BBNY17} and  \cite[Theorem 2]{LS18}.

\begin{theorem} \label{thm:concentration}
For any $0<r<1$ and $ \kappa >0$, define
\begin{equation} \label{classF}
 \mathscr{F}_{r,\kappa} : = \left\{  f\in \Co^2(\C) : \Delta f(z) =0\ \text{ for all } z\in \C \setminus \D_r \text{ and }\max_\C |\Delta f| \le N^\kappa  \right\} . 
\end{equation}
For any $\eta>0$ (possibly depending on $N$ with $\eta \le \frac{N}{\log N}$),  there exists a constant $C_{r}>0$ such that 
\[
\P_N\bigg[  \sup\left\{ |\X(f)| : f\in \mathscr{F}_{r,\kappa} \text{ and } \int_\D |\Delta f(z)| \frac{\d^2z}{\pi}  \le 1  \right\} \ge \eta \log N +1 \bigg] 
\le C_r N^{5/4+\kappa-\eta}.
\]
\end{theorem}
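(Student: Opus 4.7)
The plan is to reduce Theorem~\ref{thm:concentration} to a maximum-modulus bound on a suitably regularized version of $\Psi_N$ via Green's identity, and then control that maximum using the exponential-moment techniques underlying the upper bound in Theorem~\ref{thm:Ginibre}. For $f \in \mathscr{F}_{r,\kappa}$, since $\Delta f$ is compactly supported in $\overline{\D_r}$, one has the logarithmic potential representation $f(w) = \frac{1}{2\pi}\int\log|w-z|\Delta f(z)\,\d^2 z$ up to a harmonic remainder; fixing the gauge so that $f$ is determined by $\Delta f$ (the harmonic part being absorbed into the implicit normalization) and combining with the definition \eqref{logcharpoly} of $\Psi_N$ yields
\begin{equation*}
\X(f) = \frac{1}{2}\int_{\D_r}\Psi_N(z)\, \Delta f(z)\,\frac{\d^2 z}{\pi}.
\end{equation*}

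I would then regularize $\Psi_N$ at a small mesoscopic scale $\delta = N^{-\alpha}$, with $\alpha$ chosen large compared to $\kappa$, which smooths out the logarithmic singularities at the eigenvalues. A direct estimate using $\|\Delta f\|_\infty \le N^\kappa$, $\int_\D |\Delta f|\,\d^2 z /\pi \le 1$, and the integrability of $\log|\cdot|$ shows that the regularization error $\int_{\D_r}(\Psi_N - \Psi_N^\delta)\Delta f \,\d^2 z /\pi$ is of order $N^{\kappa+1}\delta^2 \log\delta^{-1}$, hence $o(1)$ once $\alpha$ is large enough. It therefore suffices to prove the tail bound
\begin{equation*}
\P_N\left[\sup_{|z|\le r} |\Psi_N^\delta(z)| \ge (\log N)^{1+\epsilon}\right] \le C\, e^{-(\log N)^{1+\epsilon}/2}.
\end{equation*}
At each fixed $z \in \D_r$ the exponential moment $\E_N[e^{t\Psi_N^\delta(z)}]$ is of Gaussian type with effective variance of order $\log N$, as a consequence of the linear-statistics asymptotics of \cite{AHM15, WW} consistent with the covariance \eqref{Psicov}. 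Markov's inequality with $t = \pm (\log N)^{\epsilon/2}$ gives a pointwise tail of the form $\exp(-c(\log N)^{1+3\epsilon/2})$, with ample room for a union bound over a sufficiently fine net of $\overline{\D_r}$ of polynomial cardinality, since $\Psi_N^\delta$ is deterministically Lipschitz with polynomial constant.

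The main technical obstacle is the uniform control of the regularization error over the class $\mathscr{F}_{r,\kappa}$: near each eigenvalue, both $|\Psi_N|$ and $|\Delta f|$ can simultaneously be large, and one must exploit the $L^1$ constraint on $\Delta f$ to ensure it cannot be concentrated on too many disjoint $\delta$-neighborhoods of eigenvalues at once. A secondary issue is the careful justification of the integration-by-parts identity under the stated hypotheses on $f$, which reduces to checking that any harmonic gauge contributes negligibly — a consequence of the asymptotic centering of $\Psi_N$ established in \cite{RV07, WW}.
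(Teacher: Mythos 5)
Your route is genuinely different from the paper's, and the overall structure can be made to work, but two of the supporting claims are on shakier ground than you suggest. Both proofs start from the representation $\X(f)=\tfrac{1}{2\pi}\int\Delta f\,\Psi_N$ of \eqref{X3}. From there the paper does \emph{not} regularize $\Psi_N$: it splits $\D_r$ into the ``good'' set where $|\Psi_N|\le(\log N)^{1+\varepsilon}$ (on which the $L^1$ constraint on $\Delta f$ gives the main term) and the ``bad'' set $\mathscr{G}$, where the $L^\infty$ constraint on $\Delta f$ together with Cauchy--Schwarz and a crude deterministic bound $\int_\D|\Psi_N|^2\lesssim N^2$ (on the event $\max_j|\lambda_j|\le 2$) produces a contribution of order $N^{1+\kappa}\sqrt{|\mathscr{G}|}$. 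It then bounds $\E_N[|\mathscr{G}|]$ by $N^{1/8}e^{-(\log N)^{1+\varepsilon}}$ via a single Markov step with the fixed exponent $\gamma=1$ in \eqref{ww}, and applies Markov again to $|\mathscr{G}|$. This avoids any net, any chaining, and any exponential-moment bound for a smoothed field. Your approach instead regularizes $\Psi_N$ at a sub-microscopic scale, controls the regularization error deterministically, and then controls $\sup_{\D_r}|\Psi_N^\delta|$ by a union bound over a polynomial net with a Lipschitz bound. This is a perfectly reasonable alternative; its cost is that you need pointwise exponential-moment control on the mollified field and a Lipschitz bound for the net, and those are slightly more structure than the paper actually needs.

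On the exponential-moment step, taking $t=\pm(\log N)^{\epsilon/2}$ is not actually supported by the inputs available here: the bound \eqref{ww} used in the paper is stated for $\gamma$ in a bounded window $[-1,4]$, and while the Webb--Wong asymptotics \eqref{WW} are stated ``uniformly for $\gamma>-2$'', the paper never uses them for $\gamma$ tending to infinity with $N$, so your claimed tail of order $\exp(-c(\log N)^{1+3\epsilon/2})$ is not justified by the cited references. The fix is easy and you should state it explicitly: with $t=\pm1$ fixed, Markov plus Jensen (to transfer moments from $\Psi_N$ to the mollification $\Psi_N^\delta$) give a pointwise tail $C_rN^{1/8}e^{-(\log N)^{1+\epsilon}}$, and the polynomial cardinality of the net is then absorbed by the super-polynomial factor $e^{-(\log N)^{1+\epsilon}}$ with room to spare. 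You do not need growing $t$ for this statement.

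The second issue is your treatment of the harmonic part of $f$. You write that ``any harmonic gauge contributes negligibly --- a consequence of the asymptotic centering of $\Psi_N$''. That is not correct. If $h$ is any nonconstant entire harmonic function then $\X(h)$ is a nontrivial random variable (e.g. $h=\Re(z^2)$ gives $\X(h)=\sum_j\Re(\lambda_j^2)$), and since $h$ leaves both $\Delta f$ constraints unchanged, scaling $h$ by a large factor makes $|\X(f)|$ arbitrarily large. The centering of $\Psi_N$ has nothing to do with this; the quantity in question is a fluctuation, not a mean. The harmonic part is therefore not \emph{negligible} --- it must be \emph{excluded} by fiat: $f$ is implicitly taken to be the logarithmic potential of its own Laplacian (equivalently, $\X(f)$ is interpreted through \eqref{X3}), which is exactly what the paper's proof does and what makes the class well posed. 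You were right to flag the issue, but your proposed justification of it does not hold.
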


We believe that Theorem~\ref{thm:concentration} is of independent interest since it covers any smooth mesoscopic linear statistic at arbitrary small scales in a uniform way. 
This is to be compared to the local law of \cite[Theorem 2.2]{BYY14} which is valid for general Wigner ensembles, but not with the (optimal) logarithmic bound for the fluctuations and without such uniformity in~$f$.  
The proof of Theorem~\ref{thm:concentration} is given in Section~\ref{sect:concentration} and it relies on the basic observation that in the sense of distribution, the Laplacian of the field $\Psi_N$ is related to the empirical measure of the Ginibre ensemble suitably centered: 
$\Delta \Psi_N = 2\pi N \left( \frac 1N \sum_{j=1}^N \delta_{\lambda_j} - \frac 1\pi \1_\D  \right)$. 

\medskip

The proof of Theorem~\ref{thm:Ginibre} consists of an upper--bound\footnote{See Theorem~\ref{thm:WW} below.} which is based on the subharmonicity of the logarithm of the absolute value of the Ginibre characteristic polynomial and the moments asymptotics from Webb--Wong  \cite{WW19} and of a lower--bound which exploits the log-correlated structure of the field $\Psi_N$. 
More precisely, by relying on the robust approach from \cite{LOS18}, we obtain the lower--bound in Theorem~\ref{thm:Ginibre}  by constructing a family of subcritical Gaussian multiplicative chaos measures associated with certain mesoscopic regularization of the field $\Psi_N$ -- see Theorem~\ref{thm:gmc} below for further details.
Gaussian multiplicative chaos (GMC) is a theory which goes back to Kahane \cite{Kahane85} and it aims at encoding geometric features of a  log-correlated field by means of a family of random measures. 
These GMC measures are defined by taking the exponential of a log-correlated field through a renormalization procedure.
We refer the readers to Section~\ref{sect:gmc} for a brief overview of the theory and to the review of Rhodes--Vargas \cite{RV14}  or the elegant and short article of Berestycki \cite{Berestycki17}  for  more comprehensive presentations. 
It is well--known that in the subcritical phase, these GMC measures \emph{live} on the sets of so--called \emph{thick points}\footnote{
The concept of thick points is crucial to describe the geometric properties of log-correlated fields. Informally, these points corresponds to the extremal values of the field.} of the underlying  field, \cite[Section 4]{RV14}. 
 By exploiting this connection, we obtain from our analysis the leading order of the measure of the sets of thick points of the characteristic polynomial for large $N$.

\begin{theorem} \label{thm:TP}
Let us define the set of  $\beta$-thick points of the Ginibre characteristic polynomial:
\begin{equation} \label{TP}
\T_N^\beta(r): = \big\{ x\in \overline{\D_r} : \Psi_N(x)  \ge \beta \log N  \big\} 
\end{equation}
and let $|\T_N^\beta(r)|$ be its Lebesgue measure.
For any $0<r<1$, any $0 \le \beta < 1/\sqrt{2}$ and any small $\epsilon>0$, we have
\begin{equation} \label{TP0}
\lim_{N\to+\infty} \P_N\left[ N^{-2\beta^2-\delta}  \le |\T_N^\beta(r)| \le N^{-2\beta^2+\delta}\right] =1. 
\end{equation}
\end{theorem}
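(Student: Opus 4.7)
The plan is to prove matching bounds $|\T_N^\beta| \asymp N^{-2\beta^2 \pm \delta}$ via first-moment computations rooted in the Gaussian heuristic suggested by \eqref{Psicov}: the pointwise variance of $\Psi_N$ is $\tfrac14 \log N$, hence $\P_N[\Psi_N(x) \ge \beta \log N] \approx N^{-2\beta^2}$ for each fixed $x \in \D_r$. The upper bound will follow from this heuristic together with the moment asymptotics of Webb--Wong, whereas the lower bound will exploit the GMC measures produced in Theorem~\ref{thm:gmc}.

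For the upper bound, I would write
\[
\E_N |\T_N^\beta| = \int_{\overline{\D_r}} \P_N[\Psi_N(x) \ge \beta \log N] \, \d^2 x
\]
and control the pointwise tail through an exponential Markov inequality: for $s > 0$,
\[
\P_N[\Psi_N(x) \ge \beta \log N] \le e^{-s \beta \log N} \, \E_N[e^{s \Psi_N(x)}] .
\]
The exponential moments on the right are precisely the quantities controlled by Theorem~\ref{thm:WW}; for $x \in \D_r$ and $s$ in a compact subset of $(0, \infty)$ they satisfy $\E_N[e^{s \Psi_N(x)}] = N^{s^2/8 + o(1)}$, consistent with $\Psi_N(x)$ being asymptotically Gaussian of variance $\tfrac14 \log N$. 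Optimising in $s$ gives $s = 4\beta$ and $\P_N[\Psi_N(x) \ge \beta \log N] \le N^{-2\beta^2 + o(1)}$, whence $\E_N |\T_N^\beta| \le N^{-2\beta^2 + o(1)}$; Markov's inequality then yields the upper tail of \eqref{TP0}.

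For the lower bound, fix a small $\delta' > 0$ and set $\gamma = 4(\beta + \delta')$, chosen to remain strictly below the $L^2$-critical threshold $2\sqrt{2}$ associated with the log-covariance coefficient $\tfrac12$ in \eqref{Psicov}. Theorem~\ref{thm:gmc} supplies a nontrivial subcritical GMC measure $\mu_\gamma^{(\eta)}$ built from a mesoscopic regularization $\Psi_N^{(\eta)}$ of $\Psi_N$ at a scale $\eta$ close to $N^{-1/2}$, with $\E_N[\mu_\gamma^{(\eta)}(\D_r)] \asymp 1$ and bounded second moment; Paley--Zygmund then yields $\mu_\gamma^{(\eta)}(\D_r) \ge c > 0$ with probability bounded away from zero. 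A Cameron--Martin tilt identifies the mean of $\Psi_N^{(\eta)}(x)$ under $\mu_\gamma^{(\eta)}$-biased sampling as $(\beta + \delta')\log N + O(1)$, with fluctuations of order $\sqrt{\log N}$, so the overwhelming fraction of the GMC mass localises on the narrow band
\[
S_N = \bigl\{ x \in \overline{\D_r} : (\beta + \delta')\log N \le \Psi_N^{(\eta)}(x) \le (\beta + 2\delta')\log N \bigr\} .
\]
On $S_N$ the explicit density $e^{\gamma \Psi_N^{(\eta)}(x) - \gamma^2 \sigma^2(\eta)/2}$ (where $\sigma^2(\eta)$ denotes the variance of $\Psi_N^{(\eta)}$) is bounded by $N^{2\beta^2 + O(\delta')}$, so comparing $\mu_\gamma^{(\eta)}(S_N) \ge c/2$ with the trivial bound $\mu_\gamma^{(\eta)}(S_N) \le |S_N| \cdot N^{2\beta^2 + O(\delta')}$ produces $|S_N| \ge N^{-2\beta^2 - O(\delta')}$. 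A uniform control of $|\Psi_N - \Psi_N^{(\eta)}|$ over $\overline{\D_r}$, available as a byproduct of Theorem~\ref{thm:concentration}, ensures $S_N \subset \T_N^\beta$, and the lower tail of \eqref{TP0} follows upon sending $\delta' \to 0$.

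The main technical hurdle is the Cameron--Martin concentration step, namely proving quantitatively that $\mu_\gamma^{(\eta)}$ truly lives on the thin strip $S_N$. This demands sharp second-moment bounds on the GMC mass lying outside $S_N$, which in turn hinge on uniform multi-point exponential-moment asymptotics of the form $\E_N\bigl[\prod_{j=1}^n e^{s_j \Psi_N(x_j)}\bigr]$ produced by the approach of \cite{AHM15} -- the very ingredients underpinning the construction of $\mu_\gamma^{(\eta)}$ in Theorem~\ref{thm:gmc}. Once these estimates are available, the regularization-to-field matching and the first-moment upper bound are comparatively routine.
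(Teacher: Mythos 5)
Your upper bound matches the paper's exactly: Markov's inequality with $\gamma = 4\beta$ against the Webb--Wong estimate \eqref{ww}, giving $\E_N|\T_N^\beta| \le C_r N^{-2\beta^2}$, then Markov again. No issues there.

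For the lower bound, the GMC strategy is the right one and is in fact what the paper uses through the intermediary estimate \eqref{TP1}, which packages precisely the Paley--Zygmund/Cameron--Martin reasoning you describe but applied to the mesoscopic field $\X(g_N^z)$ rather than repeated from scratch. The fatal gap is your final transfer step: the claimed inclusion $S_N \subset \T_N^\beta$ via ``uniform control of $|\Psi_N - \Psi_N^{(\eta)}|$'' does not hold and cannot be made to hold. First, $\log\prod_j|\cdot - \lambda_j|$ is subharmonic, so its mollification satisfies the \emph{sub}-mean-value inequality $\Psi_N^{(\eta)}(x) \ge \Psi_N(x) - O(N\eta^2)$; this bounds $\Psi_N(x)$ \emph{from above} by the regularized field, not from below, which is the wrong direction for concluding $\Psi_N(x) \ge \beta\log N$ from $\Psi_N^{(\eta)}(x) \ge (\beta+\delta')\log N$. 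Worse, $\Psi_N$ has genuine $-\infty$ logarithmic singularities at each $\lambda_j$, so $\Psi_N - \Psi_N^{(\eta)}$ is unbounded below pointwise; there is no uniform two-sided comparison. Finally, Theorem~\ref{thm:concentration} controls $|\X(f)|$ only for $\Co^2$ test functions $f$; the test function corresponding to $\Psi_N(x) - \Psi_N^{(\eta)}(x)$ has a log singularity at $x$ and falls outside that class, so the theorem is not applicable.

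The paper replaces this pointwise transfer by a measure-theoretic pigeonhole argument, and that is the idea you are missing. Set $\Upsilon_N^\beta = \{z : \X(\psi_\epsilon(\cdot-z)) \ge (\beta+\delta/8)\log N\}$ and work on the event $\A = \{\max_{\D_r}\Psi_N \le 2\gamma_*\log N\}$, which has probability tending to $1$ by Proposition~\ref{prop:UB}. Since $\X(\psi_\epsilon(\cdot-z)) = \int \Psi_N(x)\phi_\epsilon(x-z)\,\d^2x$ is a local average, splitting the integral over $\T_N^\beta$ and its complement and using the uniform upper bound from $\A$ gives, for $z\in\Upsilon_N^\beta$,
\[
(\beta+\tfrac{\delta}{8})\log N \le \beta\log N + \mathrm{c}\,\gamma_* \, \big|\T_N^\beta \cap \D(z,\epsilon/4)\big|\,\epsilon^{-2}\log N,
\]
which forces $|\T_N^\beta\cap\D(z,\epsilon/4)| \gtrsim \delta\epsilon^2$. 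A tiling of $\D_{\epsilon_0}$ by squares of side $\epsilon$ then converts the lower bound on $|\Upsilon_N^\beta|$ (which follows from \eqref{TP1} and \eqref{max2}) into the desired lower bound on $|\T_N^\beta|$. This averaging argument sidesteps the singularity problem entirely and is the nontrivial ingredient your proposal lacks.
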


The proof of Theorem~\ref{thm:TP} will be given in Section~\ref{sect:TP} and the result has the following interpretation. 
By \eqref{logcharpoly}, the field $-\Psi_N$ corresponds to the  (electrostatic) potential energy  generated by the random charges $(\lambda_1, \dots , \lambda_N)$ and the negative uniform background $\sigma$.  
One may view $-\Psi_N$ as a complex energy landscape and the asymptotics \eqref{TP0} describe the multi--fractal spectrum of the level sets near the extreme local minima of this landscape. 
Moreover,  as a consequence  of Theorems~\ref{thm:Ginibre} and~\ref{thm:TP}, we obtain the leading order of the corresponding \emph{free energy}, i.e. the logarithm of the partition function of the Gibbs measure $e^{\beta \Psi_N}$ for $\beta>0$. Namely, by adapting the proof of \cite[Corollary~1.4]{ABB17}, it holds  for any $0<r<1$,  in probability, 
\vspace{-.3cm}
\begin{equation}\label{freezing}
 \lim_{N\to+\infty} \frac{1}{\beta \log N} \log\left( \int_{\D_r} e^{\beta \Psi_N(z)} \frac{\d^2z}{\pi} \right)  = \max_{\gamma \in [0, 1/\sqrt{2}]}\Big\{ \frac 1\beta+ \gamma  -\frac2\beta \gamma^2 \Big\} 
 = \begin{cases}
\displaystyle\frac1\beta+ \frac\beta8 , & \beta \in [0, \sqrt{8}] \\
\displaystyle 1/\sqrt{2} , &\beta > \sqrt{8}
 \end{cases}. 
\end{equation}

The fact that the free energy is constant and equal to ${\displaystyle\lim_{N\to+\infty}}\frac{\max_{\D_r} \Psi_N}{\log N}$ in the \emph{supercritical regime} $\beta>\sqrt{8}$ is called \emph{freezing}. 
This property is typical for Gaussian log-correlated fields and our results rigorously establish that the Ginibre characteristic polynomial behave according to the Gaussian predictions which is a well--known heuristic in random matrix theory. 
Moreover, this \emph{freezing scenario} is instrumental to predict the full asymptotic behavior \eqref{max} of the maximum of the field $\Psi_N$, including the law of the error term, see e.g. \cite{FB08}. 
For an illustration of level sets of the random function and in particular of the geometry of thick points, see Figure~\ref{fig:levelsets}.

\medskip

Let us return to the connections between our results and the theory  of Gaussian multiplicative chaos.
The family of GMC measures associated to the GFF are called \emph{Liouville measures} and they play a fundamental role in recent probabilistic constructions in the context of quantum gravity, imaginary geometries, as well as conformal field theory. We refer to the reviews \cite{Aru, RV17} for further references on these aspects of the theory. 
Thus, motivated by the result of Rider--Vir\`ag, it is expected that  a random measure whose density is given by a small\footnote{That is in the subcritical phase -- the critical value being $\gamma_*=\sqrt{8}$ as in \eqref{freezing} or  in Theorem~\ref{thm:gmc} below. }  power of the characteristic polynomial (see Figure~\ref{fig:charpoly} below)  converges when  suitably normalized:
\begin{equation} \label{charpoly}
\frac{\prod_{j=1}^N|z-\lambda_j|^\gamma}{\E_N\left[\prod_{j=1}^N|z-\lambda_j|^\gamma \right] } \frac{\d^2z}{\pi} = \frac{e^{\gamma \Psi_N(z)}}{\E_N\left[ e^{\gamma \Psi_N(z)}\right]}  \frac{\d^2z}{\pi}
\ \overset{\rm law}{\longrightarrow} \ \mu_\G^\gamma , 
\end{equation}
where $\mu_\G^\gamma$ is a Liouville measure with parameter $0<\gamma< \sqrt{8}$. 
Hence, this provides an interesting connection between the Ginibre ensemble of random matrices and random geometry. 
As we observed in \cite[Section 3]{CFLW}, this convergence result in the subcritical phase implies the lower--bound in Theorem~\ref{thm:Ginibre}. An important observation that we make in this paper is that it suffices to establish the convergence of  
$\frac{e^{\gamma \psi_N(z)}}{\E_N\left[ e^{\gamma \psi_N(z)}\right]}  \frac{\d^2z}{\pi}$ to a GMC measure for a suitable regularization $\psi_N$ of the field $\Psi_N$ in order to capture the correct leading order asymptotics of its maximum and thick points. The main issues are to work with a regularization at an \emph{optimal mesoscopic scale} $N^{-1/2+\alpha}$ for arbitrary small $\alpha>0$ and to be able to obtain the convergence in the whole subcritical phase. 
In particular, our result on GMC, Theorem~\ref{thm:gmc},  provides  strong evidence that  the prediction \eqref{charpoly} holds.

\medskip

It is an important and challenging problem to obtain \eqref{charpoly} already in the subcritical phase. In particular, this requires to derive the asymptotics of joint moments of the characteristic polynomials. For a single $z\in\D_r$, such  asymptotics are obtained by Webb--Wong in  \cite{WW19} using Riemann--Hilbert techniques. Let us recall their main result which is also a key input in our method.

\begin{theorem}[\cite{WW19}, Theorem 1.1] \label{thm:WW}
For any fixed $0<r<1$, we have
\begin{equation} \label{WW19}
\E_N[e^{\gamma \Psi_N(z)}] = (1+ o(1)) \frac{(2\pi)^{\gamma/4}}{G(1+\gamma/2)} N^{\gamma^2/8} ,
\end{equation}
where the error term is uniform for $\gamma$ in compact sets of $\{ \gamma \in \C : \Re\gamma>-2\}$ and $z\in \D_r$.
\end{theorem}

\begin{remark} \label{rk:Ginibremoment}
{\normalfont The asymptotics of the joint exponential moments of $\Psi_N$ remain conjectural, see e.g. 
\cite[Section~1.2]{WW19}, except for even moments for which there are explicit formulae, see \cite{AV03,FK07,FR09}. 
These formulae rely on  the determinantal structure of the Ginibre ensemble: for any $n\in\N$, we have for  any $z_1, \dots , z_n \in\C$ such that  $z_1 \neq \cdots \neq z_n$,
\begin{equation} \label{mcharpoly}
\E_N\left[  {\textstyle\prod_{i=1}^n \prod_{j=1}^N } |z_i-\lambda_j|^2\right]
=   \frac{ \pi^n  \prod_{k=N}^{N+n-1} k! }{N^{-Nn - \frac{n(n+1)}{2}} }  \frac{\det_{n\times n}[K_{N+n}(z_i,z_j)]}{  \prod_{1\le i< j \le n}| z_i-z_j|^2}   e^{N \sum_{i=1}^n |z_i|^2} , 
\end{equation}
where $K_{N+n}$ is the Ginibre kernel as in \eqref{Gkernel}. Using the \emph{off--diagonal} (Gaussian) decay of the Ginibre kernel,  we can show that
\[
\det_{n\times n}[ K_{N+n}(z_i,z_j)] = {\textstyle \prod_{k=1}^{n} } K_{N+n}(z_i,z_i) \left( 1+ \O(N^{-1}) \right) ,
\]
uniformly for all $\inf_{i\neq j} |z_i -z_j| \ge c\sqrt{\frac{\log N}{N}}$  if  $c>0$ is a sufficiently large constant.
If $|z_i| \le c\sqrt{\frac{\log N}{N}}$, we also have 
$K_{N+n}(z_i,z_i)  =  \tfrac N\pi \left( 1+ \O(N^{-1}) \right)$.
Thus, by \eqref{EL} and \eqref{logcharpoly}  we obtain that for any given $z_1, \dots , z_n \in \D$, such that $z_1 \neq \cdots \neq z_n$,
\begin{equation}  \label{evenmoment}
\E_N\big[ {\textstyle \prod_{i=1}^n} e^{2 \Psi_N(z_i)}\big] = 
 \left( \sqrt{2\pi N} \right)^n  |\triangle(z_1,\dots, z_n) |^{-2}   \left( 1+ \O(N^{-1}) \right) , 
\end{equation}
which  matches exactly with the \emph{Fisher--Hartwig  predictions} from \cite[Section~1.2]{WW19} with $\gamma_1= \cdots = \gamma_n =2$.}
\hfill $\blacksquare$
\end{remark}

\begin{figure}[H]
\vspace*{-.5cm}
\caption{\small \label{fig:logcharpoly}Sample of the logarithm of the absolute value of the Ginibre characteristic polynomial $\Psi_N(z)$ for $z\in\D$ for a random matrix of dimension $N=3000$.}
\centering
\includegraphics[width=.7\textwidth]{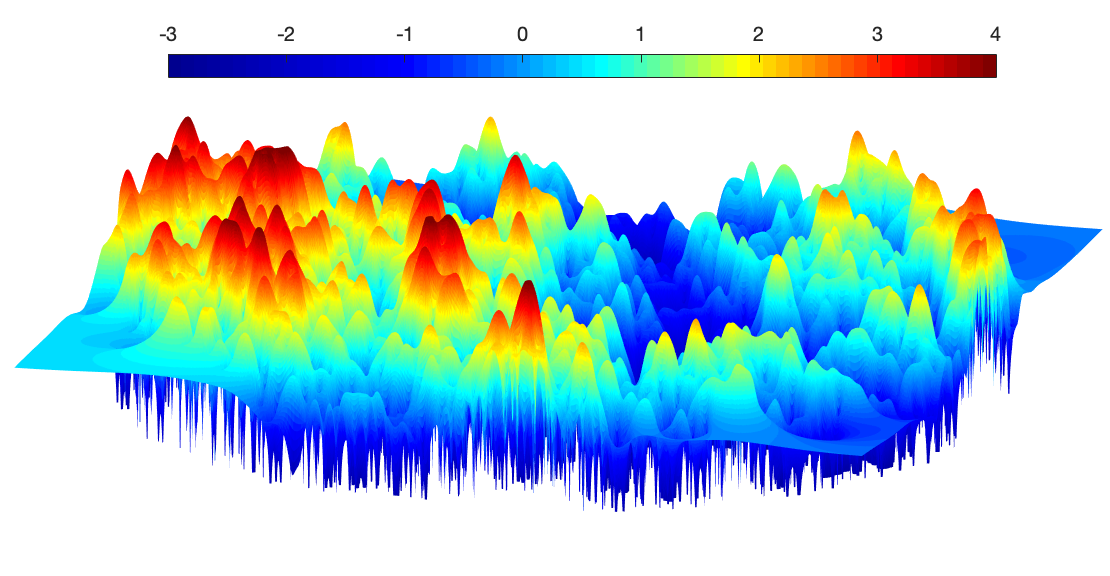}
\end{figure}

\begin{figure}[H]
\vspace*{-.5cm}
\caption{\small \label{fig:levelsets}Level sets of the logarithm of the absolute value of the Ginibre characteristic polynomial $\Psi_N(z)$ for a random matrix of dimension $N=5000$. }
\centering
\includegraphics[width=.5\textwidth]{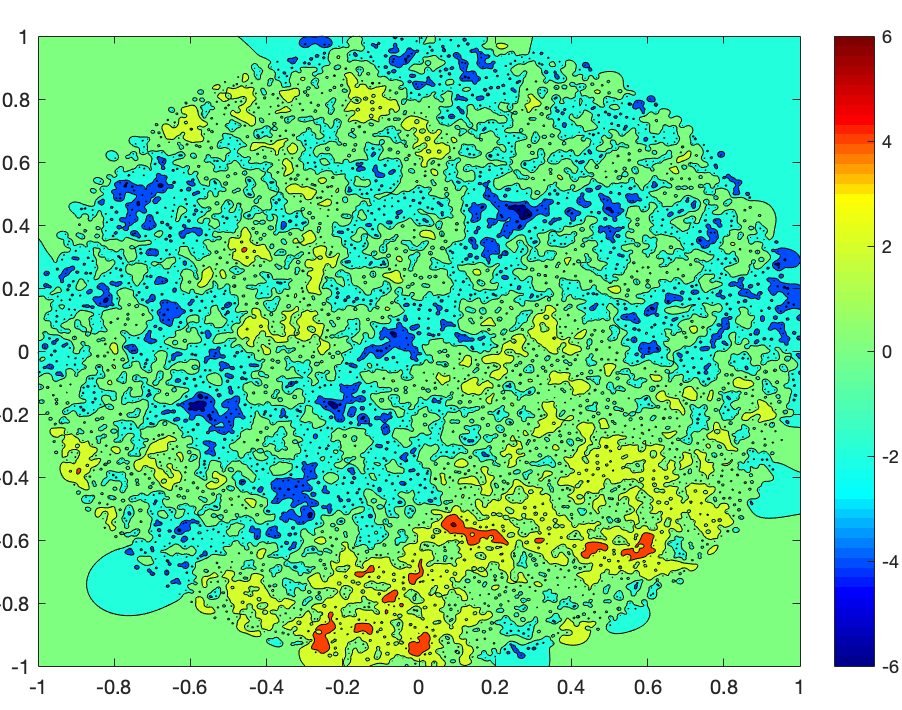}
\end{figure}

\begin{figure}[H]
\caption{\small \label{fig:charpoly} Sample of the (normalized) Ginibre characteristic polynomial $\frac{\prod_{j=1}^N  |z-\lambda_j| }{\E_N[ \prod_{j=1}^N  |z-\lambda_j| ] }$ for a random matrix of dimension $N=3000$.  This is an approximation of the Liouville measure $\mu_G^\gamma$ with (subcritical) parameter $\gamma=1$.}
\centering
\includegraphics[width=.7\textwidth]{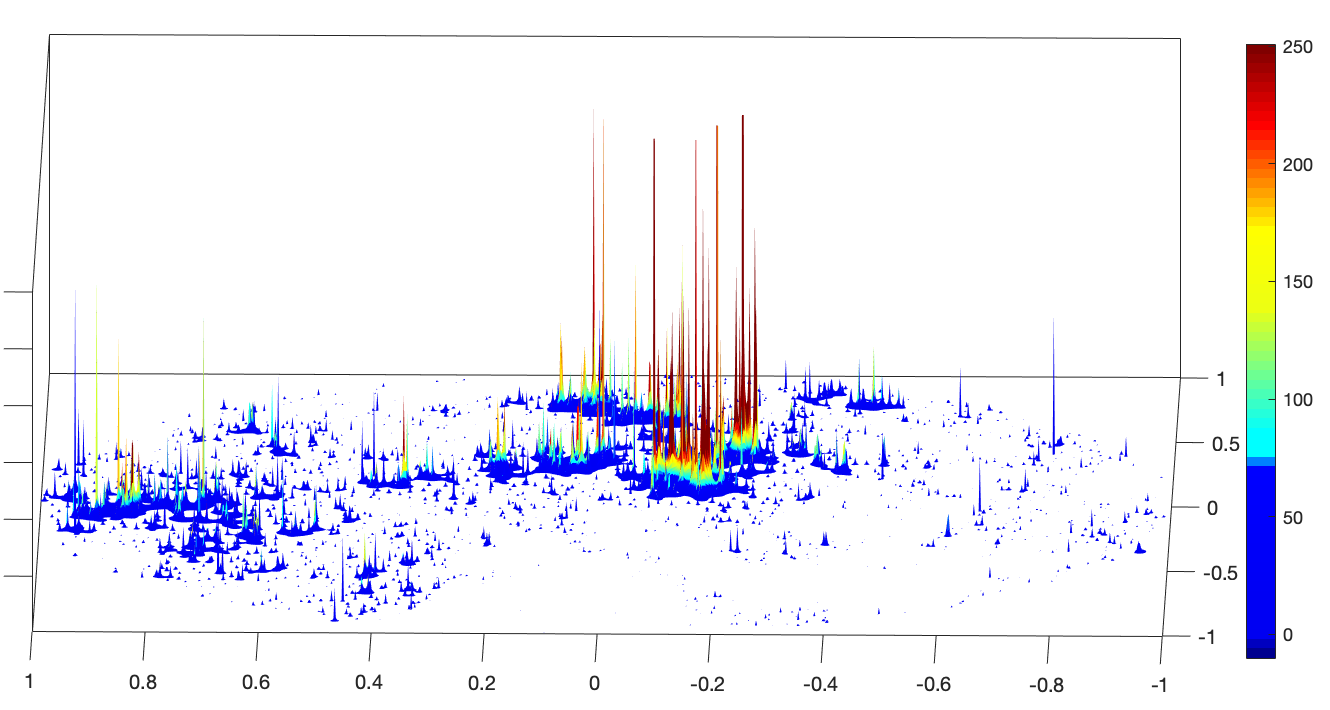}
\end{figure}

\subsection{Outline of the article}
The remainder of this article is devoted to the proof of Theorem~\ref{thm:Ginibre}. 
The result follows directly by combining the upper--bound of
Proposition~\ref{prop:UB} and the lower--bound from Proposition~\ref{prop:LB}. 
As we already emphasized the proof of the lower--bound follows from the connection with GMC theory and the details of the  argument  are reviewed in Section~\ref{sect:proofLB}. In particular, it is important to obtain \emph{Gaussian asymptotics} for the exponential moments of a mesoscopic regularization of the field $\Psi_N$, see Proposition~\ref{thm:exp}. These asymptotics are obtained by using the method developed by Ameur--Hedenmalm--Makarov \cite{AHM15} which relies on \emph{Ward identity}, also known as \emph{loop equation},  and the determinantal structure of the Ginibre ensemble. Compared with the proof of the central limit theorem in \cite{AHM15}, we face two significant extra technical challenges: we must consider a mesoscopic linear statistic coming from a test function which develops logarithmic singularities as $N\to+\infty$.  
This implies that we need a more precise approximation for the correlation kernel of the \emph{biased determinantal process}.
For these reasons, we give a detailed proof of Proposition~\ref{thm:exp} in Section~\ref{sect:clt} and Section~\ref{sect:approx}.
Our proof for the upper--bound is given in Section~\ref{sect:UB} and it relies on the subharmonicity of the logarithm of the absolute value of the Ginibre characteristic polynomial and the asymptotics from Theorem~\ref{thm:WW}. 
In Section~\ref{sect:concentration}, we discuss an application to linear statistics of the Ginibre eigenvalues and give the proof of Theorem~\ref{thm:concentration}. 

\subsection{Acknowledgment}
G.L. is  supported by the University of Zurich Forschungskredit grant FK-17-112 and  by the grant SNSF Ambizione  S-71114-05-01.
G.L. wishes to thank P. Bourgade for 
insightful discussions about the problem considered here and the referee for  interesting comments which helped to improve the presentation of this article and for pointing out several references.

\section{Proof of the lower--bound} \label{sect:proofLB}

Recall that $\Psi_N$ denotes the centered logarithm of the absolute value of the Ginibre characteristic polynomial, \eqref{logcharpoly}. The goal of this section is to obtained the following result:

\begin{proposition} \label{prop:LB}
For any $r>0$ and any $\delta>0$, we have 
\[
\lim_{N\to+\infty} \P_N\left[ \max_{|x| \le r} \Psi_N(x) \ge\frac{1-\delta}{\sqrt{2}} \log N \right] =1 . 
\]
\end{proposition}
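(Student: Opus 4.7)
The strategy is to exploit the log--correlated structure of $\Psi_N$ through the theory of Gaussian multiplicative chaos, following the approach sketched in the introduction around \eqref{charpoly}. Fix a subcritical parameter $\gamma \in (0,\sqrt{8})$ and a small mesoscopic exponent $\alpha>0$, and set $\epsilon_N = N^{-1/2+\alpha}$. Define a mesoscopic regularization $\psi_N(z) = (\Psi_N * \chi_{\epsilon_N})(z)$ obtained by convolving with a smooth bump $\chi_{\epsilon_N}$ of scale $\epsilon_N$. Equivalently, $\psi_N(z) = \X(f_{z,\epsilon_N})$ is a mesoscopic linear statistic with test function essentially $\log|z-\cdot|$ smoothed at scale $\epsilon_N$. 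The plan is to show that the random measure
\[
\mu_N^\gamma(\d z) = \frac{e^{\gamma \psi_N(z)}}{\E_N[e^{\gamma \psi_N(z)}]}\,\frac{\d^2 z}{\pi}
\]
on $\D_r$ is asymptotically non-degenerate and concentrated on the set of $\gamma$-thick points of $\psi_N$.

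The first main input is a precise mod--Gaussian asymptotic expansion of the joint exponential moments $\E_N\!\left[\exp\bigl(\sum_{k} \gamma_k \psi_N(z_k)\bigr)\right]$ for finitely many base points $z_1, \dots, z_n \in \D_r$, which is the content of the forthcoming Proposition~\ref{thm:exp} and refines Theorem~\ref{thm:WW} to the mesoscopic multi-point setting. It must in particular recover the limiting log-covariance $\frac 12 \log(\epsilon_N^{-1} \wedge |z_i-z_j|^{-1}) + O(1)$ as suggested by \eqref{Psicov}. Assuming this input, a standard second moment computation along the lines of \cite{Berestycki17} shows that, for every $\gamma<\sqrt{8}$, the family $\bigl(\mu_N^\gamma\bigr)_N$ is $L^2$--bounded and converges weakly to a non-trivial subcritical GMC measure $\mu_\gamma$ on $\D_r$. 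In particular, there exists $c_\gamma>0$ such that $\mu_N^\gamma(\D_r) \ge c_\gamma$ with probability tending to $1$.

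To extract the lower bound on $\max \Psi_N$, I would exploit the tilting interpretation of the density $e^{\gamma\psi_N(z)}/\E_N[e^{\gamma\psi_N(z)}]$. The mod--Gaussian asymptotics imply that, under the tilted law, $\psi_N(z)$ is approximately Gaussian with shifted mean $\gamma\,\mathrm{Var}_N(\psi_N(z)) = \frac{\gamma(1-2\alpha)}{4}\log N + O(1)$ and variance $\asymp \log N$. A Gaussian tail estimate then yields, for any $\delta>0$,
\[
\E_N\!\left[ \mu_N^\gamma\!\left(\bigl\{ z \in \D_r : \psi_N(z) < \tfrac{\gamma(1-2\alpha)}{4}\log N - \delta \log N\bigr\}\right)\right] \le N^{-c(\delta)} \longrightarrow 0.
\]
Combining this with the $L^2$ lower bound $\mu_N^\gamma(\D_r) \ge c_\gamma$ forces the existence, with probability tending to $1$, of some $z \in \D_r$ for which $\psi_N(z) \ge \bigl(\tfrac{\gamma(1-2\alpha)}{4} - \delta\bigr)\log N$. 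Passing from $\psi_N$ to $\Psi_N$ uses that $\log\prod_j|\cdot - \lambda_j|$ is subharmonic, so its smooth average at scale $\epsilon_N$ is dominated by its maximum on the corresponding ball; subtracting the smooth piece $N\varphi * \chi_{\epsilon_N} = N\varphi + O(1)$ on $\D_r$ gives $\max_{|y|\le r} \Psi_N(y) \ge \psi_N(z) - O(1)$. Finally, letting $\gamma \uparrow \sqrt{8}$ and $\alpha,\delta \downarrow 0$ drives $\tfrac{\gamma(1-2\alpha)}{4} - \delta$ to $1/\sqrt{2}$, which establishes Proposition~\ref{prop:LB}.

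The main obstacle I expect is the proof of Proposition~\ref{thm:exp}: the natural test function $f_{z,\epsilon_N}$ is singular (logarithmic), and its support has mesoscopic scale $N^{-1/2+\alpha}$ arbitrarily close to the microscopic scale. The Ameur--Hedenmalm--Makarov method via Ward identities, which was developed in \cite{AHM15} for smooth macroscopic test functions, must be substantially refined: one needs a sharper asymptotic expansion of the correlation kernel of the biased determinantal process than what the CLT requires, with error terms stable under both the logarithmic singularity and the shrinking support. This refined kernel analysis is the most delicate part of the argument and is carried out in Sections~\ref{sect:clt}--\ref{sect:approx}.
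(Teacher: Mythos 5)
Your proposal follows the same high-level strategy as the paper: regularize $\Psi_N$ at a mesoscopic scale $N^{-1/2+\alpha}$, establish mod--Gaussian asymptotics for the exponential moments of the regularized field, build a subcritical GMC measure from it, and use the non-triviality of this measure to extract a lower bound on the maximum, then take $\gamma\uparrow\sqrt{8}$ and $\alpha\downarrow 0$. The passage from the regularized field to $\Psi_N$ by subharmonicity is a valid variant of the paper's even simpler observation that $\int\Psi_N(z+x)\phi_\epsilon(x)\,\d^2x\le\max_{\D(z,\epsilon_0)}\Psi_N$ because $\phi_\epsilon$ is a probability density.

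There is, however, one genuine error that would break the constant. You claim that ``a standard second moment computation along the lines of \cite{Berestycki17} shows that, for every $\gamma<\sqrt{8}$, the family $(\mu_N^\gamma)_N$ is $L^2$-bounded.'' This is false: with the normalization $\E[\G(x)\G(z)]=\frac12\log|x-z|^{-1}$ used here, the second moment $\E_N[\mu_N^\gamma(\D_r)^2]$ diverges as soon as $\gamma\ge 2$, because the double integral $\iint |x-z|^{-\gamma^2/2}\,\d^2x\,\d^2z$ ceases to converge. Hence a genuine second moment argument only covers $\gamma<2=\sqrt{4}$, and letting $\gamma\uparrow 2$ in your final limit gives the constant $1/2$, not $1/\sqrt{2}$; the whole point of working in the full subcritical phase $\gamma<\sqrt{8}$ is precisely to close this factor-$\sqrt{2}$ gap. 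Reaching the whole subcritical phase is what Berestycki's argument in \cite{Berestycki17} is designed for, but his approach is emphatically \emph{not} a second moment computation -- it is a change-of-measure (``rooted measure'') argument that circumvents the divergence of second moments. The paper invokes this through \cite{LOS18} (Theorem~\ref{thm:gmc}) and then transfers the result to the maximum via the general extreme-value machinery \cite[Theorem 3.4]{CFLW} (Corollary~\ref{cor:LB}), rather than the ad hoc tilting argument you sketch. Your tilting heuristic is essentially correct in spirit and parallels what \cite[Theorem 3.4]{CFLW} formalizes, but it needs the full-subcritical-phase convergence as input, and that input cannot be obtained by a second moment bound. A secondary (but technically important) difference: the paper does not regularize $\Psi_N$ directly but instead works with $g_N^z=\psi_\epsilon(\cdot-z)-\psi(\cdot-z)$, which is compactly supported inside $\D$; this compact support is what makes the Ward-identity analysis of Section~\ref{sect:clt} go through, and the discrepancy $\X(\psi(\cdot-z))$ is controlled separately in Lemma~\ref{lem:maX}.
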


Our strategy to prove Proposition~\ref{prop:LB} is to obtain an analogous  lower--bound for a  mesoscopic regularization of $\Psi_N$ which is also compactly supported inside $\D$. 
Note that it is also enough to consider the maximum in a disk  $\D_{\epsilon_0} = \{x\in \C : |x|\le \epsilon_0 \}$ for a small $\epsilon_0>0$. 
To construct such a regularization, let us  fix $0< \epsilon_0 \le 1/4$ and a mollifier $\phi \in \Co^\infty_c(\D_{\epsilon_0})$ which is radial\footnote{This means that $\phi(x)$ is a smooth probability density function which only depends on $|x|$ with compact support in the disk $\D_{\epsilon_0}$. Note that we can work with any such mollifier.}.
For any $0<\epsilon<1$, we denote $\phi_\epsilon(\cdot) = \phi(\cdot/\epsilon) \epsilon^{-2}$ and to approximate the logarithm of the characteristic polynomial, we consider the test function 
\begin{equation} \label{psi}  
\psi_\epsilon(z) : = \int \log |z-x|  \phi_\epsilon(x) \d^2x . 
\end{equation}
We also denote $\psi=\psi_1$. 
For technical reason, it is simpler to work with test function compactly supported inside $\D$ -- which is not the case for $\psi_\epsilon$. 
However, this can be fixed by making the following modification: for any $z \in \D_{\epsilon_0}$, we define 
\begin{equation} \label{gN}
g_N^z(x) := \psi_{\epsilon}(x-z) - \psi(x-z) , \qquad x\in \C. 
\end{equation}
It is easy\footnote{This follows from the fact that since the mollifier $\phi$ is radial and compactly supported, $\psi_\epsilon(z) = \log |z|$ for all $|z| \ge \epsilon$ and for any $\epsilon>0$.}
to see that the function $g_N^z$ is smooth and compactly supported inside  $\D(z, \epsilon_0)$.
Since we are  interested in the regime where $\epsilon(N)\to 0$ as $N\to+\infty$, we emphasize that $g_N^z$ depends on the dimension $N\in\N$ of the matrix.   Then, the random field $z\mapsto \X(g_N^z)$ is related to the logarithm of the Ginibre characteristic polynomial as follows:
\begin{equation} \label{X4}
 \X(g_N^z)
=   \int \Psi_N(x) \phi_\epsilon(z+x) \d^2x - \int \Psi_N(x) \phi(z+x) \d^2x  .
\end{equation}

In particular, $z\mapsto  \X(g_N^z)$ is still an approximate log-correlated field. Indeed, according to \eqref{clt}, \eqref{Sigma} and formula \eqref{cov1} below, we expect that as $N\to+\infty$
\[
\E_N\left[  \X(g_N^z)  \X(g_N^x) \right] = \frac{1}{2} \log\left( \epsilon(N)^{-1}\wedge |x-z|^{-1}\right) + \O(1) . 
\]
This should be compared with formula \eqref{Psicov}.

\subsection{Gaussian multiplicative chaos} \label{sect:gmc}

Let $\G$ be the Gaussian free field (GFF) on $\D$ with free boundary conditions. That is,  $\G$ is a Gaussian process taking values in the space of  Schwartz distributions with covariance kernel:
\begin{equation} \label{cov3}
\E\left[\G(x) \G(z)\right] = \frac{1}{2} \log|z-x|^{-1} .
\end{equation}
Up to a factor of $1/\pi$, the RHS of \eqref{cov3} is the Green's function\footnote{We chose this unusual normalization in order to match with formula  \eqref{Psicov}.} for the Laplace operator $-\Delta$ on $\C$.
Because of the singularity of the kernel \eqref{cov3} on the diagonal, $\G$  is called a \emph{log-correlated field} and it cannot be defined pointwise. 
In general, $\G$ is interpreted as a random distribution valued in a Sobolev space $H^{-\alpha}(\D)$ for any $\alpha>0$, \cite{Aru}. In particular, for any mollifier $\phi$ as above and any $\epsilon>0$, we  view
\begin{equation} \label{reg}
\G_\epsilon(z) := \int \G(x) \phi_\epsilon(z+x) \d^2x
\end{equation}
as a regularization of $\G$. 

\medskip

The theory of  Gaussian multiplicative chaos  aims at defining the exponential of a log-correlated field. Since such a field is merely a  random  distribution, this is a non trivial problem. However, in the so-called \emph{subcritical phase}, this can be done by a quite simple renormalization procedure. Namely, for $\gamma>0$, we define 
$\mu^\gamma_\G = : \hspace{-.1cm}e^{\gamma \G} \hspace{-.15cm}:$ as 
\begin{equation} \label{muG}
\mu^\gamma_\G(\d x) : = \lim_{\epsilon\to0} \frac{e^{\gamma\G_\epsilon(x)}}{\E[e^{\gamma\G_\epsilon(x)}]} \sigma(\d x) . 
\end{equation}
It turns out that this limit exists almost surely as a random measure on $\D$ and that it does not depend on the mollifier $\phi$ within a reasonable class.
Moreover, in the case of the GFF normalized as in \eqref{cov3}, it is a non trivial measure if and only if the parameter $0<\gamma < \sqrt{8}$ -- this is called the \emph{subcritical phase},
\cite{RV10, Berestycki17, Aru}. 
For general log-correlated field, the theory of GMC goes back to the work of Kahane \cite{Kahane85} and in the case of the GFF, the construction $\mu^\gamma_\G$ was re-discovered by Duplantier--Sheffield \cite{DS11} and Rhodes--Vargas \cite{RV11} from different perspectives. 
In a sense, the random measure  $\mu^\gamma_\G$ encodes the \emph{geometry} of the GFF. For instance, the support of $\mu^\gamma_\G$ is a fractal set which is closely related to the concept of \emph{thick points}, \cite{HMP10}. We will not discuss these issues here and refer instead to \cite{Aru, CFLW}  for further details. Let us just point out that the  relationship between Theorem~\ref{thm:gmc} and Corollary~\ref{cor:LB} below is based on such arguments.  

\medskip

For  log-correlated fields which are only asymptotically Gaussian, especially those coming from random matrix theory such as the logarithm of the Ginibre characteristic polynomial $\Psi_N$, the theory of Gaussian multiplicative chaos has been developed in \cite{Webb15, LOS18}.  
The construction in \cite{LOS18} is inspired from the approach of Berestycki \cite{Berestycki17} and it has been recently applied to unitary random matrices in  \cite{NSW}, as well as to Hermitian unitary invariant random matrices in \cite{BWW18, CFLW}.
In this paper, we construct subcritical GMC measures coming from the regularization $ \X(g_N^z)$, \eqref{X4}, of the logarithm of the Ginibre characteristic polynomial at a scale  $\epsilon = N^{-1/2+\alpha}$ for any small $\alpha>0$.
This mesoscopic regularization makes it simpler to compute the leading asymptotics of the exponential moments
of the field $ \X(g_N^z)$ -- see Proposition~\ref{thm:exp} below.
Then, using the main results from \cite{LOS18},  it allows us to prove that the limit of the renormalized exponential $\mu^\gamma_N = : \hspace{-.1cm}e^{\gamma \X(g_N^z)} \hspace{-.15cm}:$ exists for all $\gamma>0$ in the subcritical phase and that it is absolutely continuous with respect to the GMC measure $\mu^\gamma_\G$.

\begin{theorem} \label{thm:gmc}
Recall that $0< \epsilon_0 \le 1/4$ is fixed.
Let $\gamma >0$ and $g_N^{z}$  be as in \eqref{gN} with  $\epsilon=\epsilon(N) = N^{-1/2+\alpha}$ for  a fixed $0<\alpha<1/2$. Let us define the random measure $\mu^\gamma_N$ on $ \D_{\epsilon_0}$ by
\[
\mu^\gamma_N(\d z) := \frac{\exp\left(\gamma \X(g_N^{z}) \right)}{\E_N[\exp\left(\gamma  \X(g_N^{z}) \right)]}  \sigma(\d z) . 
\]
For any $0<\gamma < \gamma_* = \sqrt{8}$, the measure $\mu^\gamma_N$ converges in law as $N\to+\infty$ with respect to the weak topology toward a random measure 
$\mu^\gamma_\infty$ which has the same law, up to a deterministic constant, as $e^{\gamma\G_1(x)}\mu^\gamma_\G(\d x)$, where $\G_1$ is the smooth Gaussian process obtained from $\G$ as in \eqref{reg} with $\epsilon=1$ and $\mu^{\gamma}_\G$ is the GMC measure \eqref{muG}. In particular, our convergence covers the whole subcritical phase. 
\end{theorem}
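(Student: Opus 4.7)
The overall strategy is to reduce the statement to the abstract convergence criterion for sequences of exponentiated approximately log--correlated fields developed in \cite{LOS18}, with the mod--Gaussian asymptotics for $\X(g_N^z)$ (Proposition~\ref{thm:exp}) providing the main analytic input. The plan has four steps.

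\emph{Step 1: decompose the test function.} By construction $g_N^z(x) = \psi_\epsilon(x-z) - \psi(x-z)$, so we may regard $\X(g_N^z)$ as the sum of a genuinely mesoscopic piece, $\X(\psi_\epsilon(\cdot-z))$, which will provide the log--correlated fluctuations at scale $\epsilon = N^{-1/2+\alpha}$, and a macroscopic piece $-\X(\psi(\cdot-z))$, which is a smooth linear statistic of $z\in \D_{\epsilon_0}$. The Rider--Vir\'ag central limit theorem \eqref{clt}--\eqref{Sigma} identifies the macroscopic piece with the smooth Gaussian process $\G_1$ appearing in the statement (up to a deterministic centering). So I expect the limit to factor as $e^{\gamma \G_1(z)}\mu_\G^\gamma(\d z)$, and the task is to justify this splitting at the level of the random measure.

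\emph{Step 2: verify the covariance structure.} Using \eqref{Sigma} applied to the compactly supported $g_N^z$ and the Green's function identity, one computes
\[
\E_N[\X(g_N^z)\X(g_N^w)] = \tfrac{1}{2}\log\bigl(\epsilon^{-1}\wedge |z-w|^{-1}\bigr) + c(z,w) + o(1),
\]
uniformly for $z,w \in \D_{\epsilon_0}$ at positive distance from $\partial \D$, where $c(z,w)$ is a smooth, bounded correction coming from the mollifier $\phi$ and from the subtraction of $\psi$. This is precisely the covariance of a mollification, at scale $\epsilon$, of the GFF \eqref{cov3} plus the smooth Gaussian field $\G_1$, which is what one needs to match the Gaussian reference model.

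\emph{Step 3: upgrade to mod--Gaussian asymptotics at several points.} Proposition~\ref{thm:exp} provides mod--Gaussian asymptotics of the form
\[
\E_N\Bigl[\exp\Bigl(\textstyle\sum_{k=1}^n \gamma_k \X(g_N^{z_k})\Bigr)\Bigr] = \exp\Bigl(\tfrac12 \sum_{k,\ell}\gamma_k\gamma_\ell \E_N[\X(g_N^{z_k})\X(g_N^{z_\ell})] + m_N(\g)\Bigr)(1+o(1)),
\]
with $m_N(\g)$ converging to an explicit functional of $\g=(\gamma_1,\dots,\gamma_n)$. Combined with Step~2, this identifies the joint law of $(\X(g_N^{z_k}))_{k\le n}$ with that of a Gaussian vector with the GFF--type covariance above, up to explicit multiplicative corrections that are exactly absorbed by the normalisation in the definition of $\mu_N^\gamma$.

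\emph{Step 4: invoke the LOS18 criterion.} With Steps 1--3 in hand, the verification of the hypotheses of \cite{LOS18} is essentially mechanical: one checks the uniform control on $L^p$ moments of $\mu_N^\gamma(A)$ for $p$ slightly greater than one (which follows from Proposition~\ref{thm:exp} together with the Kahane convexity inequality for GMC), together with the tightness and the identification of all subsequential limits via the moment/covariance comparison above. The subcriticality assumption $\gamma<\sqrt{8}$ enters precisely in order to have such an $L^p$ bound and in order for the limiting GMC measure $\mu_\G^\gamma$ to be non--degenerate.

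The main obstacle, for which Sections~\ref{sect:clt} and~\ref{sect:approx} of the paper are devoted, is Step~3: obtaining the mod--Gaussian asymptotics for mesoscopic test functions with logarithmic singularities, which requires a sharper version of the Ward identity / loop equation analysis of \cite{AHM15} and, in particular, a more delicate approximation of the correlation kernel of the conditioned (biased) determinantal process. Once Proposition~\ref{thm:exp} is granted, the remaining work is a reasonably direct application of the GMC convergence machinery of \cite{LOS18}, which is why the structure of this section reduces essentially to citing that result and checking its hypotheses.
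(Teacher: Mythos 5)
Your proposal follows essentially the same route as the paper: reduce to the abstract GMC convergence criterion of \cite{LOS18} (the paper uses \cite[Theorem 2.6]{LOS18} directly, checking Assumptions 2.1--2.4), identify the limiting covariance of $\X(g_N^z)$ with that of a mollified GFF plus a smooth Gaussian correction as in \eqref{cov1}--\eqref{cov2}, and recognise that the only nontrivial analytic input is the uniform mod--Gaussian asymptotics of Proposition~\ref{thm:exp}. Your Step~4 glosses the internal machinery of \cite{LOS18} (Kahane convexity, $L^p$ moment bounds) where the paper simply invokes the theorem as a black box, but this is an elaboration of the same argument rather than a different one.
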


The proof of Theorem~\ref{thm:gmc} follows from applying \cite[Theorem 2.6]{LOS18}. Let us check that the correct assumptions hold.  First, we can deduce  \cite[Assumption 2.1, Assumption 2.2]{LOS18} from the CLT of  Rider--Vir\`ag \eqref{clt}. Indeed, for any fixed $\epsilon>0$, as  $\psi_\epsilon$ is a smooth function,  the process   $(z, \epsilon)\mapsto \X\big(\psi_\epsilon(\cdot-z)\big)$ converges in the sense of finite dimensional distributions to a mean--zero Gaussian process whose covariance is given by \eqref{Sigma}\footnote{This formula for the limiting covariance in the Rider--Vir\`ag CLT  holds for test functions which are harmonic outside of $\D$, \cite{RV07}. In particular, it can be applied to \eqref{psi} if $z\in\D$ and $\epsilon>0$ is small enough.  
Then, one deduces the counterpart of \eqref{cov2}--\eqref{cov1} holds for the field \eqref{X4}  which is supported in $\D_{2\epsilon_0}$ by linearity.}. Namely, letting $\Sigma(\cdot;\cdot)$ be the quadratic form associated with $\Sigma(\cdot)$, we have for any  $z_1, z_2 \in \D_{\epsilon_0}$ and $0<\epsilon_1, \epsilon_2 \le \epsilon_0$, 
\begin{align}
\lim_{N\to+\infty} \E_N\left[ \X\left( \psi_{\epsilon_1}(\cdot - z_1 )  \right)  \X\left(\psi_{\epsilon_2}(\cdot - z_2) \right)\right] 
&\notag 
=\Sigma\left( \psi_{\epsilon_1}(\cdot - z_1 )  ;  \psi_{\epsilon_2}(\cdot - z_2) \right)\\
&\notag
 = - \frac{1}{2} \iint  \log |x_1-x_2| \phi_{\epsilon_1}(x_1 - z_1)  \phi_{\epsilon_2}(x_2-z_2)  \d^2 x_1 \d^2 x_2 \\
 &\label{cov2}
 =  \E\left[ \G_{\epsilon_1}(z_1) \G_{\epsilon_2}(z_2) \right] \\
&\notag
 = - \frac{1}{2} \iint  \log |z_1-z_2 + \epsilon_1 u_1- \epsilon_2u_2|  \phi(u_1) \phi(u_2) \d^2 u_1 \d^2 u_2  \\
&\label{cov1}
 =  \frac{1}{2} \log\left(  |z_1-z_2 |^{-1} \wedge \epsilon_1^{-1} \wedge \epsilon_2^{-1}  \right) + \underset{\epsilon_1, \epsilon_2 \to0}{\O(1)} ,
\end{align}
where the error term is uniform. 
In particular, \eqref{cov2} shows that the process $(z, \epsilon)\mapsto \X\big(\psi_\epsilon(\cdot-z)\big)$ converges in the sense of finite dimensional distributions to  $(z, \epsilon)\mapsto \G_\epsilon(z)$ as in \eqref{reg}, which comes from mollifying a GFF. 
In this case, the \cite[Assumption 2.3]{LOS18} follows e.g. from \cite[Theorem 1.1]{Berestycki17}. 
So, the only important input to deduce Theorem~\ref{thm:gmc} is to verify  \cite[Assumption 2.4]{LOS18} which consists in obtaining \emph{Gaussian asymptotics} for the joint exponential moments of the field  $\X(g_N^z)$.
Namely, we need the following asymptotics: 

\begin{proposition} \label{thm:exp}
Fix $0<\alpha<1/2$, $R>0$ and let $\epsilon=\epsilon(N) = N^{-1/2+\alpha}$.
For any $n\in\N$,  $\gamma_1 , \dots \gamma_n \in \R$, $z_1, \dots, z_n \in \C$, we  denote
\begin{equation} \label{g}
g_N^{\g, \vec{z}} (x) : = {\textstyle \sum_{k=1}^n} \gamma_k  \left( \psi_{\epsilon_k}(x-z_k) - \psi(x-z_k) \right) , \qquad x\in \C, 
\end{equation}
with parameters $ \epsilon(N)\le  \epsilon_1(N) \le \cdots \le \epsilon_n(N) <1$. 
We have
\begin{equation} \label{exp}
\E_N\left[\exp\left( \X(g_N^{\g, \vec{z}} ) \right)\right] =  \exp\Big( \frac 12 \Sigma^2(g_N^{\g, \vec{z}} )  + \underset{N\to+\infty}{o(1)}\Big) ,
\end{equation}
where  $\Sigma$ is given by \eqref{Sigma} and the error term is uniform for all $\vec{z} \in \D_{\epsilon_0}^{\times n}$ and $\vec{\gamma} \in [-R,R]^n$.
\end{proposition}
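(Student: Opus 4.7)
The plan is to adapt the tilting and Ward--identity scheme of Ameur--Hedenmalm--Makarov \cite{AHM15}. Set $g := g_N^{\g,\vec{z}}$. For $t \in [0,1]$, introduce the tilted probability measure $\P_N^{(t)}$ with Radon--Nikodym derivative proportional to $e^{t\X(g)}$. Since $e^{t\X(g)}$ factors over the eigenvalues up to the deterministic constant $e^{-tN\int g\,\d\sigma}$, this tilted measure is again a determinantal process, with correlation kernel $K_N^{(t)}$ obtained by reorthogonalising the monomials in the weighted Bergman space with weight $e^{-N|x|^2 + tg(x)}$. Differentiating and integrating in $t$ gives the identity
\begin{equation*}
\log \E_N \bigl[e^{\X(g)}\bigr] \;=\; \int_0^1 \E_N^{(t)}\bigl[\X(g)\bigr] \, \d t ,
\end{equation*}
so the whole problem reduces to the uniform linear-response estimate $\E_N^{(t)}[\X(g)] = t\, \Sigma^2(g) + o(1)$, valid for $t \in [0,1]$, $\g \in [-R,R]^n$ and $\vec{z} \in \D_{\epsilon_0}^{\times n}$. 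Integration in $t$ then yields exactly \eqref{exp}.

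To obtain this linear-response estimate, I would invoke the Ward (or loop / Dyson--Schwinger) identity for the Ginibre ensemble, which provides an exact relation between the one-point density of $\P_N^{(t)}$ and a convolution of $\Delta g$ against the diagonal of $K_N^{(t)}$. Using \eqref{EL} and integration by parts (together with $\Delta \varphi = 2\pi \1_\D$), the leading shift becomes the Dirichlet-type quantity $t\,\Sigma^2(g)$ of \eqref{Sigma}, plus a remainder controlled by a short-range pairing of $\Delta g$ with the local structure of $K_N^{(t)}$ at the microscopic scale $N^{-1/2}$. To carry this out, one must refine the Bergman-kernel asymptotics and establish that in the bulk of $\D$ the biased kernel $K_N^{(t)}$ is well-approximated by a perturbed reproducing kernel, with quantitative off-diagonal decay on the scale $N^{-1/2}$ that is uniform in $(t,\g,\vec{z})$.

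The main obstacle is uniformity in the regularisation scale $\epsilon = N^{-1/2+\alpha}$. Unlike the smooth mesoscopic test functions considered in \cite{AHM15}, each summand $\psi_{\epsilon_k}(\,\cdot\,-z_k)$ carries a logarithmic singularity down to its own mollification scale $\epsilon_k \ge \epsilon$, so $g$ fails to be smooth below $\epsilon$ and $\|\Delta g\|_\infty \asymp \epsilon^{-2}$. Consequently, both the Ward-identity error and the kernel-approximation error accumulate factors logarithmic in $\epsilon^{-1} \le N^{1/2}$ near each singularity $z_k$, and these must be absorbed into the $o(1)$ using the strict gap $\alpha > 0$ between the regularisation scale and the microscopic scale $N^{-1/2}$. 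A delicate local analysis of $K_N^{(t)}$ in neighbourhoods of the $z_k$'s, patched with the standard global bulk asymptotics away from the singularities, is where I expect the bulk of the technical effort to lie.
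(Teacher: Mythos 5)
Your plan is precisely the paper's proof: tilt to the measure $\P_N^*$ with perturbed potential $Q^* = Q - \tfrac{t}{2N}g_N$, apply Johansson's identity $\log\E_N[e^{\X(g_N)}] = \int_0^1 \E_N^*[\X(g_N)]\,\d t$, establish the linear response $\E_N^*[\X(g_N)] = t\,\Sigma^2(g_N) + o(1)$ via the Ward identity, and control the error through a local Bergman-kernel approximation (Proposition~\ref{prop:approx}) with off-diagonal Gaussian decay, uniformly in $t$, $\vec{\gamma}$, $\vec{z}$. You have also correctly isolated the main technical difficulty, namely that $\|\Delta g_N\|_\infty \asymp \epsilon^{-2}$ forces a refined kernel approximation at the intermediate scale $\delta_N = \sqrt{(\log N)^\beta/N}$ using a higher-order Taylor expansion of $g_N$, which is exactly what Sections~\ref{sect:est}--\ref{sect:BA} carry out.
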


The proof of Proposition~\ref{thm:exp} is the most technical part of this paper and it is postponed to Section~\ref{sect:clt}. 
It relies on adapting in a non-trivial way the arguments of Ameur--Hedenmalm--Makarov from \cite{AHM15}. 
In particular, our proofs relies heavily on the determinantal structure of the Ginibre eigenvalues and we need local asymptotics for the correlation kernel of the ensemble obtained after making a small perturbation of the Ginibre potential -- see Section~\ref{sect:notation}.
It turns out that these asymptotics are \emph{universal}  and can be derived using techniques inspired  from the works of Berman \cite{Berman09, Berman12} which have also been applied to study the fluctuations of the eigenvalues of  normal random matrices  in \cite{AHM10, AHM11, AHM15}.

\medskip

As an important consequence of Theorem~\ref{thm:gmc}, we obtain the following corollary:  

\begin{corollary} \label{cor:LB}
Fix $0<\alpha<1/2$, let $\epsilon=\epsilon(N) = N^{-1/2+\alpha}$ and let $\psi_\epsilon$ be as in \eqref{psi}. If $\gamma_*=\sqrt{8}$, then for any $\delta>0$ and any $0< \epsilon_0 \le 1/4$, we have 
\begin{equation*} 
\lim_{N\to+\infty} \P_N\left[ \max_{|z| \le \epsilon_0} \X\left(\psi_\epsilon(\cdot-z)\right) \ge (1-\delta)\frac{\gamma^*}{2} \log \epsilon^{-1} \right] =1 . 
\end{equation*}
\end{corollary}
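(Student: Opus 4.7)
The plan is to deduce the lower bound from the non-triviality of the subcritical GMC limit provided by Theorem~\ref{thm:gmc}, combined with the classical principle that such measures asymptotically concentrate on the thick points of the underlying log--correlated field.

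First I would reduce from $\X(\psi_\epsilon(\cdot - z))$ to $\X(g_N^z) = \X(\psi_\epsilon(\cdot - z)) - \X(\psi(\cdot - z))$. Since $\psi$ is a fixed smooth function compactly supported in $\D$ and its translates by $z \in \D_{\epsilon_0}$ form a pre-compact family in $\Co^2(\C)$, standard concentration arguments for smooth linear statistics of the Ginibre ensemble (a Chernoff bound using the exponential moments at fixed scale, combined with a polynomial $z$-net and the $O(N)$-Lipschitz dependence of $\X(\psi(\cdot-z))$ on $z$) yield $\sup_{|z|\le\epsilon_0}|\X(\psi(\cdot-z))| = O(\sqrt{\log N}) = o(\log\epsilon^{-1})$ with probability tending to one. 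It therefore suffices to prove the analogous lower bound for $\max_{|z|\le\epsilon_0}\X(g_N^z)$.

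Fix $\gamma \in (0, \gamma^*)$ and a small $\eta > 0$. Theorem~\ref{thm:gmc} gives $\mu^\gamma_N \Rightarrow \mu^\gamma_\infty$, with the limiting measure a.s.\ a non-trivial positive measure (being a positive continuous multiple of a subcritical Liouville measure), so there exists $c = c(\gamma) > 0$ with $\P_N[\mu^\gamma_N(\D_{\epsilon_0}) \ge c] \ge 1-\delta/4$ for large $N$. The key further input is the thick-point concentration: for any $\eta > 0$,
\[
\E_N\!\left[\mu^\gamma_N\big(\{z \in \D_{\epsilon_0} : \X(g_N^z) \le (\gamma/2 - \eta)\log\epsilon^{-1}\}\big)\right] \xrightarrow[N\to\infty]{} 0.
\]
By Fubini the left-hand side equals $\int_{\D_{\epsilon_0}} \P^\gamma_z[\X(g_N^z) \le (\gamma/2-\eta)\log\epsilon^{-1}]\,\sigma(\d z)$, where $\P^\gamma_z$ denotes the tilt $\d\P^\gamma_z = e^{\gamma \X(g_N^z)}/\E_N[e^{\gamma \X(g_N^z)}]\,\d\P_N$. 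Applying Proposition~\ref{thm:exp} with $n = 1$ at the two tilt parameters $\gamma$ and $\gamma - \tau$ (for any fixed $\tau > 0$) yields the mod-Gaussian Laplace identity
\[
\E^\gamma_z[e^{-\tau \X(g_N^z)}] = \exp\!\Big({-\tau\gamma\,\Sigma^2(g_N^z)} + \tfrac{\tau^2}{2}\Sigma^2(g_N^z) + o(1)\Big),
\]
uniformly in $z \in \D_{\epsilon_0}$. Since $\Sigma^2(g_N^z) = \tfrac{1}{2}\log\epsilon^{-1} + O(1)$ by \eqref{cov1}, a Chernoff bound optimized at $\tau = 2\eta$ gives $\P^\gamma_z[\X(g_N^z) \le (\gamma/2-\eta)\log\epsilon^{-1}] \le \epsilon^{\eta^2 + o(1)}$ uniformly in $z$, proving the claim.

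Combining both bounds via Markov's inequality, outside an event of probability at most $\delta/2$ one has simultaneously $\mu^\gamma_N(\D_{\epsilon_0}) \ge c$ and $\mu^\gamma_N(\{z : \X(g_N^z) \le (\gamma/2-\eta)\log\epsilon^{-1}\}) \le c/2$, forcing the super-thick level set to carry positive mass and in particular to be non-empty. Thus $\max_{|z|\le\epsilon_0}\X(g_N^z) \ge (\gamma/2-\eta)\log\epsilon^{-1}$. Choosing $\gamma = (1-\delta/3)\gamma^*$ and $\eta = \delta\gamma^*/12$, and combining with the initial reduction, yields the corollary. The main obstacle is the thick-point step: the naive Paley--Zygmund extraction $\mu^\gamma_N(\D_{\epsilon_0}) \le \sigma(\D_{\epsilon_0})\,\sup_z e^{\gamma \X(g_N^z)}/\E_N[e^{\gamma \X(g_N^z)}]$ alone would only yield $\max\X(g_N^z) \gtrsim (\gamma/4)\log\epsilon^{-1}$, i.e.\ half the correct constant, so the Laplace transform under the tilt provided by Proposition~\ref{thm:exp} at two linearly independent parameters is essential to reach the sharp value $\gamma^*/2$.
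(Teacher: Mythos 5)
Your main step (the lower bound for $\max_{|z|\le\epsilon_0}\X(g_N^z)$) is correct and is a genuinely different route: where the paper verifies \cite[Assumptions 3.1, 3.3]{CFLW} and then cites \cite[Theorem 3.4]{CFLW} as a black box, you unfold that black box. Your version -- positivity of $\mu^\gamma_N(\D_{\epsilon_0})$ with high probability, plus the Fubini identity $\E_N[\mu^\gamma_N(A)]=\int_A\P^\gamma_z[\cdot]\,\sigma(\d z)$ and a tilted Chernoff bound fed by Proposition~\ref{thm:exp} at the two parameters $\gamma$ and $\gamma-\tau$ -- is exactly the right extraction, and you are correct that the naive Paley--Zygmund step would only give the constant $\gamma/4$. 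This is self-contained and valuable if one does not want to rely on \cite{CFLW}.

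The gap is in your reduction step, the claim that a Chernoff bound at fixed exponential scale, a polynomial $z$-net and $O(N)$-Lipschitz dependence yield $\sup_{|z|\le\epsilon_0}|\X(\psi(\cdot-z))|=O(\sqrt{\log N})$ with high probability. Fixed-scale exponential moments (the only ones available here, since Theorem~\ref{thm:WW} and estimate \eqref{ww} are stated for $\gamma$ in a bounded range) give exponential, not sub-Gaussian, tails: $\P_N[|\X(\psi(\cdot-z))|>t]\le Ce^{-ct}$ for $t$ large, so the union bound over a net of $N^{O(1)}$ points forces $t\gtrsim\log N$, not $t\lesssim\sqrt{\log N}$. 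Since $\log\epsilon^{-1}=(\tfrac12-\alpha)\log N$ is itself of order $\log N$, the resulting $O(\log N)$ bound is \emph{not} $o(\log\epsilon^{-1})$ and the reduction collapses: subtracting it can eat the entire gain from the main step once $\delta$ is small. The paper circumvents this by a different mechanism -- Lemma~\ref{lem:maX} bounds $\max_{f\in\mathscr{F}_{1/2,0}}|\X(f)|$ pathwise by $\frac{1}{2\pi}\int_{\D_{1/2}}|\Psi_N|$ (no net, no chaining), then estimates the \emph{second} moment of this single dominating random variable by $O(\log N)$ using the pointwise bound $\E_N[|\Psi_N(z)|^2]\le C\log N$, and concludes via Chebyshev as in \eqref{max2}. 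You should replace your net-and-Chernoff step by an appeal to Lemma~\ref{lem:maX} (noting $\psi(\cdot-z)/\|\phi\|_\infty\in\mathscr{F}_{1/2,0}$ for $z\in\D_{\epsilon_0}$), after which the rest of your proof goes through and gives the corollary, with your choice of $\gamma=(1-\delta/3)\gamma^*$ and $\eta=\delta\gamma^*/12$ leaving enough slack to absorb the $o(\log\epsilon^{-1})$ correction.
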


The proof of Corollary~\ref{cor:LB} follows from \cite[Theorem 3.4]{CFLW} with   a few non trivial modifications, the details are given in Section~\ref{sect:LB}. 

\subsection{Proof of Proposition~\ref{prop:LB}.}

We are now ready to complete the proof of Proposition~\ref{prop:LB}.
%
%
Observe that by \eqref{logcharpoly} and \eqref{psi}, we have for $z\in\C$ and $0<\epsilon \le 1$,
\[
\X\left(\psi_\epsilon(\cdot-z)\right) = \int \Psi_N(z+x)  \phi_\epsilon(x) \d^2x .
\]
In particular since $\supp(\phi_\epsilon) \subseteq \D_{\epsilon_0}$ for any $0<\epsilon \le 1$, this implies that we have a deterministic bound for any $z\in \C$, 
\begin{equation*} 
\X\left(\psi_\epsilon(\cdot-z)\right) \le  \max_{x\in \D(z,\epsilon_0)} \Psi_N(x) . 
\end{equation*}
Then  
\[
\max_{|z| \le \epsilon_0} \X\left(\psi_\epsilon(\cdot-z)\right)  \le  \max_{|x|\le 2\epsilon_0} \Psi_N(x)
\]
and by Corollary~\ref{cor:LB} with $\alpha=\delta$, we obtain
\[
\lim_{N\to+\infty} \P_N\left[ \max_{|x| \le 2\epsilon_0} \Psi_N(x) \ge\frac{1-3\delta}{\sqrt{2}} \log N \right] =1 . 
\]
Since  $0< \epsilon_0 \le 1/4$  and $0<\delta<1/2$ are arbitrary, this yields the claim.   

\subsection{Proof of Corollary~\ref{cor:LB}} \label{sect:LB}

This corollary follows from the results on the behavior of extreme values for general log-correlated fields which are asymptotically Gaussian developed in \cite[Section 3]{CFLW}.   
Let us fix $0<\epsilon_0 \le 1/4$. 
First of all, we verify that it follows from Proposition~\ref{thm:exp} and formula \eqref{cov1} that for any $\gamma\in\R$, as $N\to+\infty$
\[
\E_N\left[\exp\left(\gamma \X(g_N^{z}) \right)\right] =  \exp\Big( \frac{\gamma^2}{4} \log \epsilon(N)^{-1}  + \O(1)\Big) ,
\]
uniformly for all $z\in\D_{\epsilon_0}$.
These asymptotics show that the field $z\mapsto \X(g_N^{z})$ satisfies \cite[Assumptions 3.1]{CFLW} on the disk $\D_{\epsilon_0}$.
Moreover, by Theorem~\ref{thm:gmc}, $\mu^\gamma_N(\D_{\epsilon_0}) \to \mu^\gamma_\infty(\D_{\epsilon_0}) $ in distribution as $N\to+\infty$ where $0<\mu^\gamma_\infty(\D_{\epsilon_0}) <+\infty $ almost surely.
This follows from the fact that  the random measure $\mu^\gamma_\infty(\d x) \propto e^{\gamma\G_1(x)}\mu^\gamma_\G(\d x)$, $\G_1$ is a smooth Gaussian process on $\D$, $\D_{\epsilon_0}$  is a continuity set for the GMC measure $ \mu^\gamma_\G$ and $0<\mu^\gamma_\G(\D_{\epsilon_0}) <+\infty $ almost surely.
Thus,  \cite[Assumptions 3.3]{CFLW} holds and we can apply\footnote{Note that our normalization does not match with the standard  convention for log-correlated fields used in \cite[Section 3]{CFLW}. Actually,    we apply \cite[Theorem 3.4]{CFLW} to the field $\X(z)= \sqrt{2} \X(g_N^z)$ -- this explains why the critical value is $\gamma^*= \sqrt{8}$ as well as the factor $\frac{\gamma^*}{2}$ in \eqref{max1}. } \cite[Theorem 3.4]{CFLW} to obtain a lower--bound for the maximum of the  field $z\mapsto \X(g_N^{z})$.
This shows that for any $0< \epsilon_0 \le 1/4$ and any $\delta>0$, 
\begin{equation} \label{max1}
\lim_{N\to+\infty} \P_N\left[ \max_{|z| \le \epsilon_0}  \X(g_N^{ z}) \ge \left(1-\frac{\delta}{2} \right) \frac{\gamma_*}{2} \log \epsilon(N)^{-1}  \right] =1 .
\end{equation}
Let us point out that heuristically, the lower--bound \eqref{max1}  follows from the facts that the random measure  $\mu^\gamma_N$
from Theorem~\ref{thm:gmc} has most of its mass in the set $\left\{  z\in \D_{\epsilon_0} :  \X(g_N^{z}) \ge \gamma(1-\delta)\Sigma^2(g_N^{z}) \right\}$ for large $N$ and that $\mu^\gamma_N$  is a non-trivial measure if and only if $\gamma< \gamma_*$. 
Moreover, by \cite[Proposition 3.8]{CFLW}, we also obtain a lower--bound for the measure of the sets where the field $z\mapsto  \X(g_N^{ z})$ takes extreme values. Namely, under the assumptions of Proposition~\ref{thm:gmc},  we have  for any $0 \le \gamma < \frac{\gamma_*}{\sqrt{2}}$ and any small $\delta>0$, 
\begin{equation} \label{TP1}
\lim_{N\to+\infty} \P_N\left[ \bigg| \Big\{ z\in\D_{\epsilon_0}  : \X(g_N^{ z}) \ge \tfrac{\gamma}{\sqrt{2}} \log \epsilon(N)^{-1} \Big\}\bigg| \ge \epsilon(N)^{(\gamma^2-\delta)/2 } \right] =1 .
\end{equation}
In Section~\ref{sect:TP}, we use these asymptotics to compute the leading order of  the measure of the sets of thick points of the Ginibre characteristic polynomial, hence proving Theorem~\ref{thm:TP}.

\medskip

Let us return to the proof of Corollary~\ref{cor:LB} and  recall that $g_N^{z} = \psi_{\epsilon}(\cdot-z) - \psi(\cdot-z)$ with $\epsilon=\epsilon(N)$. So, in order to obtain the lower--bound,  we must show  that the random variable 
$\max_{z\in \D_{\epsilon_0}} \left| \X\left(  \psi(\cdot-z) \right)\right|$ 
remains small compared to  $\log \epsilon(N)^{-1}$ for large $N\in\N$. 
To prove this claim, we rely on the following general bound.  

\begin{lemma} \label{lem:maX}
Let $ \mathscr{F}_{r,0}$ be as in \eqref{classF}. For any $0<r<1$, there exists a constant $C_r>0$ such that
\begin{equation} \label{X2}
\E_N\Big[ \big( \max_{f \in  \mathscr{F}_{r,0}} \left| \X(f) \right| \big)^2 \Big] \le C_r \left(1+\log \sqrt{N} \right) . 
\end{equation}
\end{lemma}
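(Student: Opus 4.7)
My plan is to reduce the bound to a pointwise second moment estimate for $\Psi_N$ using the distributional identity $\Delta \Psi_N = 2\pi(\sum_j \delta_{\lambda_j} - N\sigma)$ already emphasized after Theorem~\ref{thm:concentration}. Since the function $\log|\cdot-w|$ is the fundamental solution of $\Delta/(2\pi)$ on $\C$, for every compactly supported $f \in \Co^2(\C)$ one can recover $f$ as
\[
f(w) = \frac{1}{2\pi} \int \log|z-w|\, \Delta f(z)\, \d^2 z,
\]
and summing over the eigenvalues and subtracting the smoothed version against $\sigma$ gives the key representation
\[
\X(f) = \frac{1}{2\pi} \int_\C \Psi_N(z)\, \Delta f(z)\, \d^2 z.
\]
The integral is absolutely convergent despite the logarithmic singularities of $\Psi_N$ at the eigenvalues, since these are locally integrable and $\Delta f$ is bounded with compact support.

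Now for any $f \in \mathscr{F}_{r,0}$ the condition $\Delta f \equiv 0$ outside $\D_r$ together with $\max |\Delta f| \le 1$ yields the pointwise, deterministic bound
\[
\max_{f \in \mathscr{F}_{r,0}} |\X(f)| \le \frac{1}{2\pi} \int_{\D_r} |\Psi_N(z)|\, \d^2 z.
\]
I then square, apply Cauchy--Schwarz to the right-hand side, and take expectations:
\[
\E_N\Big[\big(\max_{f \in \mathscr{F}_{r,0}} |\X(f)|\big)^2\Big] \le \frac{|\D_r|}{4\pi^2} \int_{\D_r} \E_N[\Psi_N(z)^2]\, \d^2 z.
\]

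The pointwise moment $\E_N[\Psi_N(z)^2]$ is controlled by the already stated covariance asymptotics \eqref{Psicov} specialized to the diagonal: the cut-off $\sqrt{N} \wedge |x-z|^{-1}$ equals $\sqrt{N}$ when $x=z$, giving $\E_N[\Psi_N(z)^2] = \tfrac{1}{2}\log\sqrt{N} + \O(1)$ uniformly for $z \in \D_r$ (since $r<1$ keeps us in the bulk where the error in \eqref{Psicov} is uniform). Plugging this bound back, integrating over $\D_r$, and absorbing absolute constants into $C_r$ yields exactly the claimed inequality $C_r(4+\log\sqrt{N})$.

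There is no substantial obstacle; the argument is almost entirely bookkeeping once one has the Laplacian representation of $\X(f)$. The only point that deserves care is the justification of the integration-by-parts identity with the singular random function $\Psi_N$: one should either argue by the distributional identity $\Delta\Psi_N/(2\pi) = \sum_j\delta_{\lambda_j} - N\sigma$ directly, or alternatively work with $\log(|\cdot|+\eta)$ and pass to the limit $\eta\to 0$ by dominated convergence, using the local integrability of $\log$ singularities to handle the exceptional event that some $\lambda_j$ lies in $\supp\Delta f$.
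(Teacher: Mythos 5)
Your reduction is exactly the paper's: you use the same distributional representation $\X(f) = \frac{1}{2\pi}\int \Delta f(z)\,\Psi_N(z)\,\d^2z$, the same bound $|\X(f)| \le \frac{1}{2\pi}\int_{\D_r}|\Psi_N|$ for $f\in\mathscr{F}_{r,0}$, and the same Cauchy--Schwarz step. The whole lemma therefore hinges on the pointwise second moment estimate $\E_N[\Psi_N(z)^2] \le C_r(\log\sqrt{N}+4)$, and this is where your argument and the paper's diverge in a way that leaves a genuine gap.

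You derive the second moment estimate by appealing to the covariance asymptotic \eqref{Psicov} ``on the diagonal.'' But \eqref{Psicov} is never proved in the paper -- it appears in the introduction prefaced by ``it is possible to show,'' with no proof and no citation, and is used only as heuristic motivation for why $\Psi_N$ should behave like a log-correlated field. You cannot cite it as a rigorous input, and in fact the paper deliberately routes around it. The actual proof obtains the pointwise bound from Theorem~\ref{thm:WW} (the Webb--Wong Laplace transform asymptotics, a proved external result with explicit uniformity in $z\in\D_r$): one gets $\E_N[e^{\gamma|\Psi_N(z)|}] \le \tfrac{2C_r}{\pi}N^{\gamma^2/8}$ for $\gamma\in[-1,1]$, then a Chernoff/Markov argument produces a sub-Gaussian tail $\P_N[|\Psi_N(z)|\ge\lambda] \le C_r e^{-\lambda^2/(2\ell_N)}$ for $\lambda<\ell_N := \tfrac12\log\sqrt{N}$ together with an exponential tail for larger $\lambda$, and integrating $\E_N[|\Psi_N|^2] = 2\int_0^\infty\lambda\,\P_N[|\Psi_N|\ge\lambda]\,\d\lambda$ gives the bound. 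To close the gap in your write-up, you should either replace the appeal to \eqref{Psicov} by this exponential-moment/tail argument based on Theorem~\ref{thm:WW}, or supply a genuine proof (or precise reference) for the diagonal case of \eqref{Psicov} with a uniform $\O(1)$ error on $\D_r$.
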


\begin{proof}
It follows from the estimate \eqref{ww} below that we have uniformly for all $\gamma \in [-1,1]$ and all  $z\in\D_r$,
\begin{equation} \label{LG}
\E_N\left[e^{\gamma |\Psi_N(z)|}\right]  \le \tfrac{2C_r}{\pi} N^{\gamma^2/8} . 
\end{equation}
In particular,  by Markov's inequality, this implies that for any $\lambda>0$,
\begin{equation}  \label{TB}
\P_N\left[|\Psi_N(z)| \ge \lambda\right] \le  \tfrac{2C_r}{\pi} N^{1/8} e^{- \lambda} .
\end{equation}
Observe that according to \eqref{X}, we have for any test function $f\in \Co^2(\C)$,
\begin{equation} \label{X3}
\X(f) = \frac{1}{2\pi} \int_\C \Delta f(x) \Psi_N(x) \d^2x . 
\end{equation}
In particular,  this implies that for all $f\in  \mathscr{F}_{r,0}$, 
\begin{equation*} 
| \X(f) | \le \frac{1}{2\pi} \int_{|x| \le r} |\Psi_N(x)| \d^2x . 
\end{equation*}
Then, by  Jensen's inequality,  
\begin{equation*} 
| \X(f) |^2 \le \frac{1}{4\pi} \int_{|x| \le r} |\Psi_N(x)|^2 \d^2x . 
\end{equation*}
Therefore, it holds that
\[
\E_N\Big[ \big( \max_{f \in  \mathscr{F}_{r,0}} \left| \X(f) \right| \big)^2 \Big] \le  \frac{1}{4\pi} \int_{|x| \le r}  \E_N\left[ |\Psi_N(x)|^2 \right] \d^2x
\]
Hence, to obtain the bound \eqref{X2}, it suffices to show that for all $x\in \D_r$,  
\begin{equation} \label{M2}
\E_N\left[|\Psi_N(x)|^2 \right] \le C_r( \log \sqrt{N}+ 1) . 
\end{equation}

\medskip

Let us fix $z\in \D_r$ and $\ell_N = \frac 12 \log \sqrt{N}$. Using \eqref{LG} with $\gamma= \lambda/\ell_N $, we obtain for any $0<\lambda<  \ell_N$, 
\[
\P_N[|\Psi_N(z)| \ge \lambda]  \le C_re^{ \gamma^2 \ell_N/2 - \gamma \lambda}
=  C_r e^{- \lambda^2/2\ell_N} .
\]
Then, by integrating this estimate, we obtain
\begin{equation} \label{TB1}
 \int_0^{\ell_N} \lambda \P_N\left[|\Psi_N(z)| \ge \lambda\right]  \d\lambda \le  C_r \ell_N . 
\end{equation}
Moreover, using the bound \eqref{TB}, we also have
\begin{equation} \label{TB2} \begin{aligned}
 \int_{\ell_N}^{+\infty} \lambda \P_N\left[|\Psi_N(z)| \ge \lambda\right]  d\lambda 
 &\le    C_r  N^{1/8}\int_{\ell_N}^{+\infty} \lambda  e^{-\lambda} d\lambda   = C_r N^{1/8} (\ell_N +1) e^{-\ell_N}  \\
 & \le  C_r  (\ell_N +1) , 
\end{aligned}
\end{equation}
because  $N^{1/8} e^{-\ell_N} = N^{-1/8}$. 
By  combining the estimates \eqref{TB1} and \eqref{TB2}, we obtain for any $N\in \N$,
\[\begin{aligned}
\E_N\left[|\Psi_N(z)|^2 \right] &= 2\int_0^{+\infty} \lambda \P_N\left[|\Psi_N(z)| \ge \lambda\right]  d\lambda  \le  2C_r(2\ell_N +1) . 
\end{aligned}\]
This proves the inequality \eqref{M2} and  it completes the proof. 
\end{proof}

We are now ready to complete the proof of Corollary~\ref{cor:LB}. 

\begin{proof}[Proof of Corollary~\ref{cor:LB}]
Let us recall that we let $\epsilon=\epsilon(N) = N^{-1/2+\alpha}$ for $0<\alpha<1/2$. 
Moreover, by \eqref{psi}, we have $\Delta \psi = \phi \in \Co^\infty_c(\D_{\epsilon_0})$ with $0<\epsilon_0 \le 1/4$.
Then, for any $z\in\D_{\epsilon_0}$, the function 
$x\mapsto \psi(x-z) / \| \phi\|_\infty$ belongs to $\mathscr{F}_{1/2,0}$. 
By Lemma~\ref{lem:maX} and Chebyshev's inequality, this implies that for any $\delta>0$, 
\begin{equation} \label{max2}
\P_N\left[   \max_{z\in \D_{\epsilon_0}} \left| \X\left(  \psi(\cdot-z) \right)\right| \ge \frac{\delta}{2} \log \epsilon^{-1} \right]
\le \frac{2C_{1/2} \| \phi\|_\infty^2}{ \delta^2} \frac{2+\log N }{(\log \epsilon^{-1})^2} . 
\end{equation}
In particular, the RHS of \eqref{max2} converges to 0 as $N\to+\infty$. 
Moreover, since $\X(\psi_{\epsilon}(\cdot-z)) = \X(g_N^{z}) + \X(\psi(\cdot-z))$ and $\gamma^*\ge 1$, we have 
\[\begin{aligned}
 \P_N\left[ \max_{|z| \le \epsilon_0} \X\left(\psi_\epsilon(\cdot-z)\right) \ge (1-\delta)\frac{\gamma^*}{2} \log \epsilon^{-1} \right] 
& \ge \P_N\left[ \max_{|z| \le \epsilon_0}  \X(g_N^{ z}) \ge \left(1-\frac{\delta}{2} \right) \frac{\gamma_*}{2} \log \epsilon^{-1}  \right] \\
&\qquad -\P_N\left[   \max_{z\in \D_{\epsilon_0}} \left| \X\left(  \psi(\cdot-z) \right)\right| \ge \frac{\delta}{2} \log \epsilon^{-1} \right] . 
\end{aligned}\]
By \eqref{max1} and \eqref{max2}, this implies that 
\[
\lim_{N\to+\infty}  \P_N\left[ \max_{|z| \le \epsilon_0} \X\left(\psi_\epsilon(\cdot-z)\right) \ge (1-\delta)\frac{\gamma^*}{2} \log \epsilon^{-1} \right]  =1,
\]
which completes the proof. 
\end{proof}

\section{Proof of the upper--bound} \label{sect:UB}

The goal of this section is to prove the upper--bound in Theorem~\ref{thm:Ginibre}. 
Then, in Section~\ref{sect:concentration}, we adapt the proof in order to prove Theorem~\ref{thm:concentration}.

\begin{proposition} \label{prop:UB}
For any fixed $0<r<1$ and $\varepsilon>0$,  we have
\[
\lim_{N\to+\infty} \P_N \left[ \max_{\overline{\D_r}}  \Psi_N \le   \frac{1+\varepsilon}{\sqrt{2}} \log N \right] =1 . 
\]
\end{proposition}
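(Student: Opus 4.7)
The plan is to combine the subharmonicity of $\log\prod_{j}|z-\lambda_j|$ with an exponential Markov bound driven by the sharp one--point moment asymptotics of Theorem~\ref{thm:WW}.

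Since $\log\prod_{j}|z-\lambda_j| = \Psi_N(z)+N\varphi(z)$ is subharmonic on $\C$ and $\varphi(z)=|z|^2/2-1/2$ on $\D$ by \eqref{EL}, for any $z$ and $\rho$ with $B(z,\rho)\subset \D$ a direct computation of the mean value shows
\begin{equation*}
\Psi_N(z) \;\le\; \frac{1}{\pi\rho^2}\int_{B(z,\rho)}\Psi_N(w)\,\d^2w \;+\; \frac{N\rho^2}{4},
\end{equation*}
the additive term being $\tfrac{N}{2}$ times the radial average of $|w|^2-|z|^2$. I would fix a small $\eta>0$ and set $\rho=N^{-1/2-\eta}$, so $N\rho^2/4=N^{-2\eta}/4$ is negligible. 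Cover $\overline{\D_r}$ by $M\lesssim (K/\rho)^2$ centers $z_i$ of a grid of mesh $h=\rho/K$, for a large constant $K$ to be chosen. For any $z\in B(z_i,h\sqrt{2})$ the inclusion $B(z,\rho)\subset B(z_i,\rho_K)$ with $\rho_K=\rho(1+\sqrt{2}/K)$, together with $\Psi_N\le (\Psi_N)_+$, gives
\begin{equation*}
\sup_{z\in B(z_i,h\sqrt{2})}\Psi_N(z) \;\le\; C_K\, A_i \;+\;\frac{N\rho^2}{4},\qquad A_i:=\frac{1}{\pi\rho_K^2}\int_{B(z_i,\rho_K)}(\Psi_N)_+(w)\,\d^2w,
\end{equation*}
where $C_K=(1+\sqrt{2}/K)^2\to 1$ as $K\to\infty$.

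For each $i$, Jensen's inequality applied to $x\mapsto e^{\gamma x}$ with $\gamma\ge 0$ and Fubini give $\E_N\bigl[e^{\gamma A_i}\bigr]\le \frac{1}{\pi\rho_K^2}\int_{B(z_i,\rho_K)}\E_N\bigl[e^{\gamma(\Psi_N)_+(w)}\bigr]\d^2w$. Using $e^{\gamma x_+}\le 1+e^{\gamma x}$ and applying Theorem~\ref{thm:WW} on a disk $\D_{r'}$ with $r<r'<1$ (which contains every $B(z_i,\rho_K)$ for $N$ large) yields $\E_N[e^{\gamma A_i}]\lesssim N^{\gamma^2/8}$ uniformly in $i$. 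A Markov and union bound then produce, for every $\gamma>0$,
\begin{equation*}
\P_N\Bigl[\max_{\overline{\D_r}}\Psi_N \ge t\Bigr] \;\lesssim\; \Bigl(\tfrac{K}{\rho}\Bigr)^{2} e^{-\gamma(t-N\rho^2/4)/C_K}\,N^{\gamma^2/8}.
\end{equation*}
Inserting $t=\tfrac{1+\varepsilon}{\sqrt{2}}\log N$, $\rho=N^{-1/2-\eta}$, and the optimal choice $\gamma=2\sqrt{2}\,(1+\varepsilon)/C_K$, the right--hand side has the form $\exp\!\bigl(\bigl[-(1+\varepsilon)^2/C_K^2 + 1+2\eta\bigr]\log N + 2\log K + \O(1)\bigr)$, which tends to $0$ provided $(1+\varepsilon)^2 > C_K^2(1+2\eta)$. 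Choosing $\eta=\varepsilon/2$ and then $K$ large enough that $C_K<\sqrt{1+\varepsilon/2}$ closes the argument.

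The main obstacle I expect is calibrating the grid--to--ball ratio: passing from a pointwise supremum over a cell to a ball average via subharmonicity loses a multiplicative factor $C_K>1$ in the Markov exponent, and only by letting the ratio $K=\rho/h\to\infty$ do we recover the critical threshold $1/\sqrt{2}$ that matches $\gamma^2/8$ in Theorem~\ref{thm:WW}. A minor technical point is that Theorem~\ref{thm:WW} must be invoked on a disk $\D_{r'}$ slightly larger than $\D_r$ to accommodate the enlarged balls $B(z_i,\rho_K)$, which is harmless since $r<1$ is fixed.
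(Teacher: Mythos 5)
Your proof is correct and takes a genuinely different route from the paper's. Both start from the same subharmonic mean--value inequality, but where the paper reformulates in terms of the level sets $\mathscr T_N^\beta$ of $\beta$--thick points (choosing $\delta\sim\sqrt{\varepsilon\log N/N}$, integrating $\Psi_N$ over $\mathscr T_{1/\sqrt 2}$, bounding $\E_N[|\mathscr T_\beta|]$ by Lemma~\ref{lem:TP}, and then running a two--step bootstrap through the intermediate threshold $\tfrac{\varepsilon^2}{2}(\log N)^2$ on $\D_{r'}$ and the a--priori bound $3N$ on $\overline{\D}$ obtained from Kostlan's radii estimate), you instead take $\rho$ slightly below the microscopic scale $N^{-1/2}$, cover $\overline{\D_r}$ with a grid much finer than $\rho$, and run a direct exponential Chernoff/union bound driven by Jensen and the one--point moment estimate from Theorem~\ref{thm:WW}. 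Your route avoids the thick--point lemma and the bootstrap entirely; the price is the calibration of the grid--to--ball ratio $K$, which must be sent to infinity to kill the multiplicative loss $C_K>1$ and recover the sharp exponent $1/\sqrt 2$. The paper's route is heavier but dovetails with the thick--point machinery it needs anyway for Theorem~\ref{thm:TP}. Two small points to keep in mind: the explicit bound \eqref{ww} is stated for $\gamma\in[-1,4]$, so your optimal $\gamma=2\sqrt2(1+\varepsilon)/C_K$ should be checked to lie in that range — harmless, since it suffices to prove the claim for $\varepsilon$ small and $K$ large, and one may also invoke Theorem~\ref{thm:WW} directly for any fixed $\gamma>-2$; and the grid centers $z_i$ should be taken slightly outside $\D_r$ so that the cells cover $\overline{\D_r}$, with the enlarged balls $B(z_i,\rho_K)$ still inside $\D_{r'}$ for $N$ large, which you noted.
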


In order to prove Proposition~\ref{prop:UB}, we need the following consequence of Theorem~\ref{thm:WW}: for any $0<r<1$, there exists a constant $C_r>0$ such that for any $\gamma\in[-1,4]$, 
\begin{equation} \label{ww}
\E_N[e^{\gamma \Psi_N(z)}]   \le \tfrac{C_r}{\pi} N^{\gamma^2/8} . 
\end{equation}
In fact, we do not need the precise asymptotics \eqref{WW19} and the upper--bound \eqref{ww} for the Laplace transform of the field $\Psi_N$ suffices for our applications. 
For instance, it is straightforward to deduce the following  bounds.

\begin{lemma} \label{lem:TP}
Fix $0<r<1$ and recall the definition \eqref{TP} of the set $\mathscr{T}_N^\beta$ of $\beta$-thick points.
We have for any $\beta \in [0,1]$,  
\[ 
\E_N\big[|\mathscr{T}_N^\beta|\big] \le C_r N^{-2\beta^2} .
\]
\end{lemma}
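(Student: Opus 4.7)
The plan is to bound the expected Lebesgue measure pointwise via a Chernoff-type estimate and then integrate. The assumption $\beta \in [0,1]$ is exactly what is needed so that the optimal exponential parameter lies in the admissible range $[-1,4]$ of the moment bound \eqref{ww}.

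First I would write, by Fubini's theorem,
\begin{equation*}
\E_N\big[|\mathscr{T}_N^\beta|\big] = \int_{\overline{\D_r}} \P_N\!\left[\Psi_N(x) \ge \beta \log N\right] \d^2 x.
\end{equation*}
So the task reduces to a uniform pointwise tail bound on $\Psi_N(x)$ for $x \in \overline{\D_r}$.

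Next, for any $\gamma \in [0,4]$, Markov's inequality together with \eqref{ww} gives
\begin{equation*}
\P_N\!\left[\Psi_N(x) \ge \beta \log N\right] \;\le\; e^{-\gamma \beta \log N}\, \E_N\!\left[e^{\gamma \Psi_N(x)}\right] \;\le\; \tfrac{C_r}{\pi}\, N^{\gamma^2/8 - \gamma \beta}.
\end{equation*}
The exponent $\gamma^2/8 - \gamma \beta$ is minimized at $\gamma = 4\beta$; the key observation is that when $\beta \in [0,1]$, this optimal choice lies in $[0,4]$ and is therefore permitted by \eqref{ww}. Plugging it in yields an exponent of $-2\beta^2$, so
\begin{equation*}
\P_N\!\left[\Psi_N(x) \ge \beta \log N\right] \;\le\; \tfrac{C_r}{\pi}\, N^{-2\beta^2},
\end{equation*}
uniformly for $x \in \overline{\D_r}$.

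Integrating this pointwise bound over $\overline{\D_r}$, whose Lebesgue measure is $\pi r^2 \le \pi$, gives $\E_N[|\mathscr{T}_N^\beta|] \le C_r\, N^{-2\beta^2}$ (after renaming the constant), which is the claim. No step here is a serious obstacle: the whole argument is a one-line Chernoff bound plus Fubini, and the only point requiring attention is checking that the optimizer $\gamma = 4\beta$ respects the range of validity of \eqref{ww}, which is precisely guaranteed by the hypothesis $\beta \le 1$.
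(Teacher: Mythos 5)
Your proof is correct and matches the paper's argument: both apply Fubini, Markov's inequality with the Chernoff optimizer $\gamma = 4\beta$, and the moment bound \eqref{ww}, the hypothesis $\beta\le 1$ ensuring $\gamma\in[0,4]$ lies in the admissible range.
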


\begin{proof}
By Markov's inequality, we have for any $\beta \ge 0$, 
\[ \begin{aligned}
\E_N\big[|\mathscr{T}_N^\beta|\big] & = \int_{\D_r}  \P[\Psi_N(x) \ge \beta \log N] \d^2x \\
&\le N^{-\gamma\beta} \int_{\D_r} \E\left[ e^{\gamma\Psi_N(x)}\right] \d^2x . 
\end{aligned}\] 
Taking $\gamma=4\beta$ and using the estimate \eqref{ww}, this implies the claim.
\end{proof}

For the proof of  Proposition~\ref{prop:UB}, we also need the following simple Lemma.

\begin{lemma} \label{lem:radi}
Recall that $(\lambda_1, \dots , \lambda_N)$ denotes the eigenvalues of a Ginibre random matrix. For any $\delta\in [0,1]$ $($possibly depending on $N)$, we have  for all  $N\ge 3$,
\[
\P_N\left[ \max_{j\in[N]} |\lambda_j| \ge 1+\delta\right]  \le \delta^{-1} \sqrt{N} e^{-N \delta^2/4} . 
\]
\end{lemma}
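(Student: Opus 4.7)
I would control this by a first-moment bound combined with the explicit form of the kernel. Since $\{\max_j|\lambda_j|\ge 1+\delta\}\subseteq\{\#\{j:|\lambda_j|\ge 1+\delta\}\ge 1\}$, Markov's inequality applied to the counting random variable gives
\[
\P_N\!\left[\max_j |\lambda_j|\ge 1+\delta\right] \le \E_N\!\left[\#\{j:|\lambda_j|\ge 1+\delta\}\right] = \int_{|z|\ge 1+\delta} K_N(z,z)\,\frac{\d^2z}{\pi}.
\]
Using $K_N(z,z) = Ne^{-N|z|^2}\sum_{j=0}^{N-1}\frac{(N|z|^2)^j}{j!}$ from \eqref{Gkernel}, switching to polar coordinates, and substituting $s=Nr^2$, the right-hand side collapses to
\[
\int_{N(1+\delta)^2}^\infty e^{-s}\sum_{j=0}^{N-1}\frac{s^j}{j!}\,\d s.
\]

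Next I would estimate this tail integral. Since $s\ge N(1+\delta)^2$, the ratio $(j+1)/s$ of consecutive terms in the partial exponential is bounded by $(1+\delta)^{-2}<1$, so the sum is dominated by a geometric series anchored at its largest term:
\[
\sum_{j=0}^{N-1}\frac{s^j}{j!}\le \frac{s^{N-1}}{(N-1)!}\cdot\frac{s}{s-N+1}\le \frac{2}{\delta}\cdot\frac{s^{N-1}}{(N-1)!}
\]
for $\delta\in(0,1]$. What remains is a Gamma tail, $\int_a^\infty e^{-s}\frac{s^{N-1}}{(N-1)!}\,\d s=\P\bigl[\mathrm{Gamma}(N,1)\ge a\bigr]$, which by a standard Chernoff/Cram\'er bound with $a=N(1+\delta)^2$ is at most $\exp\bigl(N(2\log(1+\delta)-2\delta-\delta^2)\bigr)$. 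An elementary Taylor computation (differentiating twice) yields $2\log(1+\delta)-2\delta-\delta^2\le -\delta^2$ on $[0,1]$, so this tail is bounded by $e^{-N\delta^2}$.

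Combining the two factors gives $\P_N[\max_j|\lambda_j|\ge 1+\delta]\le 2\delta^{-1}e^{-N\delta^2}$, which is actually stronger than the claimed $\delta^{-1}\sqrt{N}e^{-N\delta^2/4}$ for all $N\ge 3$. There is no real conceptual obstacle: the only care required is the elementary inequality $2\log(1+\delta)-2\delta-\delta^2\le -\delta^2$ and the geometric-series bound on the partial exponential. An equally short alternative would be to invoke Kostlan's theorem to identify the unordered moduli $\{|\lambda_k|^2\}$ with independent $\mathrm{Gamma}(k,1/N)$ variables, union-bound over $k$, and apply the same Chernoff estimate to the dominant $k=N$ term.
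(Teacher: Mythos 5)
Your argument is correct in its substance and takes a genuinely different route from the paper's. You bound the probability by the first moment $\E_N\bigl[\#\{j:|\lambda_j|\ge 1+\delta\}\bigr]$ computed from the diagonal of the kernel \eqref{Gkernel}, reduce the resulting partial-exponential integral to a Gamma tail via a geometric-series bound, and finish with a Chernoff estimate. The paper instead invokes Kostlan's theorem directly (your alternative), takes a union bound over the independent Gamma variables $\boldsymbol{\gamma}_1,\dots,\boldsymbol{\gamma}_N$, and estimates the dominant tail $\P[\boldsymbol{\gamma}_N\ge Nt]$ by elementary integration and a Stirling-type bound. Both routes are short; yours buys the cleaner exponent $e^{-N\delta^2}$ while the paper accepts $e^{-N\delta^2/4}$ but keeps the prefactor $\sqrt{N}$. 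The key steps in your computation are sound: the reduction to $\int_{N(1+\delta)^2}^\infty e^{-s}\sum_{j<N}s^j/j!\,\d s$, the bound $\sum_{j<N}s^j/j!\le \tfrac{s^{N-1}}{(N-1)!}\cdot\tfrac{s}{s-N+1}\le\tfrac{2}{\delta}\tfrac{s^{N-1}}{(N-1)!}$ for $s\ge N(1+\delta)^2$ and $\delta\in(0,1]$, and the Chernoff bound on the Gamma tail combined with $\log(1+\delta)\le\delta$.

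One small correction: your closing claim that $2\delta^{-1}e^{-N\delta^2}\le\delta^{-1}\sqrt{N}e^{-N\delta^2/4}$ ``for all $N\ge 3$'' is not quite right. It is equivalent to $2\le\sqrt{N}\,e^{3N\delta^2/4}$, which fails at $N=3$ when $\delta$ is close to $0$, since $\sqrt{3}<2$. It does hold uniformly for $N\ge 4$ (and for $N=3$ once $\delta$ is bounded away from zero). So your bound does not literally dominate the stated one; it is a slightly different and asymptotically stronger estimate, and if you want it to imply the lemma verbatim you should either treat $N=3$ separately or retain the weaker prefactor $\sqrt{N}$ by not discarding the $(1+\delta)^{2}/(2+\delta)$ factor so aggressively.
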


\begin{proof}
Let us recall that Kostlan's Theorem \cite{Kostlan92} states that the random variables $\left\{N |\lambda_1|^2 , \dots, N |\lambda_N|^2\right\} $ have the same law as $\left\{ \boldsymbol{\gamma}_1, \dots, \boldsymbol{\gamma}_N\right\} $
where  $\boldsymbol{\gamma}_k$ are independent random variables with distribution 
$\P[\boldsymbol{\gamma}_k \ge t] = \frac{1}{\Gamma(k)} \int_t^{+\infty} s^{k-1} e^{-s} ds$
for $k=1, \dots, N$. 
By a union bound and a change of variable, this implies that
\[ \begin{aligned}
\P_N[ \max_{j\in[N]} |\lambda_j| \ge t ]  &\le N \P[ {\boldsymbol{\gamma}_N} \ge N t] \\
&\le \frac{N^{N+1}}{\Gamma(N)} \int_{t}^{+\infty} s^{N} e^{-Ns} \frac{ds}{s} \\
&\le  \frac{N^{N+1}e^{-N}}{\Gamma(N)t} \int_{t}^{+\infty}  e^{-N\phi(s)}ds 
\end{aligned}\]
where  $\phi(s) = s- \log s -1$. Since $\phi$ is strictly convex on $[0,+\infty)$ with $\phi'(t)=1-1/t$, this implies that
\[ \begin{aligned}
\P_N[ \max_{j\in[N]} |\lambda_j| \ge t ] 
& \le\frac{N^{N+1}e^{-N- N\phi(t)}}{\Gamma(N)t} \int_0^{+\infty} e^{-N(1-\frac 1t) s} ds \\
&\le \frac{ \sqrt{N}}{\sqrt{2\pi} (t-1)} e^{-N \phi(t)} .
\end{aligned}\]
Using that $\phi(1+\delta) \ge t^2/4$ for all $\delta\in[-1,1]$, this completes the proof.
\end{proof}

We are now  ready to give  the Proof of Proposition~\ref{prop:UB}.

\subsection{Proof of Proposition~\ref{prop:UB}}

Fix $0<r<1$ and a small $\varepsilon>0$ such that $r' = r+2\sqrt{\varepsilon} <1$. 
For $z\in\C$, let $P_N(z) := {\textstyle\prod_{j=1}^N } |z-\lambda_j|$ and
recall that the logarithmic potential of the circular law is $\varphi$, \eqref{varphi}. 
  Conditionally on the event $\left\{\max_{j\in[N]} |\lambda_j| \le \frac{3}{2} \right\}$, we have the a--priori bound: $\max_{z\in\overline{\D}}|P_N(z)| \le (\frac{5}{2})^N$. 
 Since $\Psi_N = \log P_N - N\varphi $ and $-\varphi \le 1/2$, by Lemma~\ref{lem:radi},  this shows that 
\begin{equation} \label{sub8}
\P_N\left[ \max_{\overline{\D}}  \Psi_N \ge 3N \right] \le \P_N \left[\max_{j\in[N]} |\lambda_j| \ge \frac{3}{2} \right] \le 2 \sqrt{N} e^{-N/16} . 
\end{equation}
The function $\Psi_N$ is upper--semicontinuous on $\C$, so that it attains it maximum on $\overline{\D_r}$. Let $x_* \in \overline{\D_r}$ such that 
\[
\Psi_N(x_*) =  {\textstyle \max_{\overline{\D_r}}}\  \Psi_N . 
\]
Since the function $z\mapsto \log P_N(z)$ is subharmonic on $\C$, we have
for any $\delta>0$, 
\begin{equation} \label{sub1}
\Psi_N(x_*) \le \frac{1}{\pi \delta^2} \int_{\D(x_*,\delta)} \hspace{-.3cm} \log P_N(z)\ \d^2z  -  N \varphi(x_*) .
\end{equation}
Observe that as $\varphi(z)  = \frac{|z|^2-1}{2} $  for $z\in\D$, if $\D(x_*,\delta) \subset \D$, then
\[\begin{aligned}
  \frac{1}{\pi \delta^2}  \int_{\D(x_*,\delta)}   \hspace{-.3cm}\varphi(x) \d^2 x
&  = \varphi(x_*) +   \frac{1}{\pi \delta^2}  \int_{\D_\delta}   \hspace{-.3cm} u\cdot \nabla \varphi(x_*)\ \d^2 u +   \frac{1}{2\pi \delta^2}  \int_{\D_\delta}   \hspace{-.3cm} u \cdot \nabla^2 \varphi(x_*) u\ \d^2 u  \\
& =  \varphi(x_*)  +   \frac{\Delta  \varphi(x_*)}{2\pi \delta^2}  \int_{\D_\delta}   \hspace{-.3cm} |u|^2 \d^2 u  \\
&= \varphi(x_*)  + \frac{\delta^2}{2}  .
\end{aligned}\]
By \eqref{sub1}, this implies that 
\begin{equation} \label{sub2}
\Psi_N(x_*) \le \frac{1}{\pi \delta^2} \int_{\D(x_*,\delta)} \hspace{-.3cm} \Psi_N(z) \d^2z  + \frac{N \delta^2}{2} . 
\end{equation}
Choosing $\delta=\sqrt{ \varepsilon \frac{\log N}{N}}$ in \eqref{sub2}, we obtain
\[
\Psi_N(x_*) \le  \frac{1}{\pi \delta^2}  \int_{\D(x_*,\delta)}   \hspace{-.3cm}\Psi_N(z) \d^2z + \frac{\varepsilon}{2}  \log N .
\]
On the event $\Big\{{\textstyle \max_{\overline{\D_r}}}\  \Psi_N\ge \frac{1+\varepsilon}{\sqrt{2}} \log N \Big\}$, this implies that 
\begin{equation} \label{sub6}
  \frac{1}{\pi \delta^2}  \int_{\D(x_*,\delta)}   \hspace{-.3cm}\Psi_N(z) \d^2z \ge  \Big( 
  \tfrac{1}{\sqrt{2}}+ \frac{\varepsilon}{5} \Big) \log N .
\end{equation}
On the other--hand, by \eqref{TP} with $\beta= 1/\sqrt{2}$, 
\begin{equation} \label{sub7}
 \frac{1}{\pi \delta^2}  \int_{\D(x_*,\delta)}   \hspace{-.3cm}\Psi_N(z) \d^2z \le
 \tfrac{\log N}{\sqrt{2}} +  \frac{1}{\pi \delta^2} \int_{\mathscr{T}_N^\beta(r')}\Psi_N(z) \d^2z . 
\end{equation}
Combining \eqref{sub6} and \eqref{sub7},  this implies
\[
 \int_{\mathscr{T}_N^\beta(r')}\Psi_N(z) \d^2z  \ge \frac{\varepsilon \delta^2}{2} \log N =   \frac{(\varepsilon\log N)^2}{2N} .
\]
Hence, we conclude that for any $\eta\in[0,1]$,  on the event $\Big\{  \frac{1+\varepsilon}{\sqrt{2}} \log N \le  \max_{\overline{\D_r}}  \Psi_N \le \max_{\overline{\D_{r'}}}  \Psi_N \le \frac{\varepsilon^2}{2} (\log N)^{1+\eta} \Big\}$, 
\[
|\mathscr{T}_N^\beta(r')|  \ge \frac{(\log N)^{1-\eta}}{N} .  
\]

By Lemma~\ref{lem:TP} applied with $\beta= 1/\sqrt{2}$, this implies that 
\begin{align} \notag 
\P_N\left[ \tfrac{1+\varepsilon}{\sqrt{2}} \log N \le  \max_{\overline{\D_r}}  \Psi_N \le \max_{\overline{\D_{r'}}}  \Psi_N \le \tfrac{\varepsilon^2}{2} (\log N)^{1+\eta} \right] 
&\le \P_N\left[  |\mathscr{T}_N^\beta(r')| \ge \frac{(\log N)^{1-\eta}}{N}  \right]  \\
& \label{sub3}
\le\frac{N }{(\log N)^{1-\eta}} \E_N\left[ |\mathscr{T}_N^\beta(r')| \right] \le \frac{C_{r'}}{(\log N)^{1-\eta}} . 
\end{align}
By a similar argument as \eqref{sub2}, with $\delta=  \varepsilon \sqrt{\frac{(\log N)^{1+\eta}}{2N}}$ and choosing $x_*\in \overline{\D_r}$  such that $\Psi_N(x_*) =  {\textstyle \max_{\overline{\D_{r'}}}}\  \Psi_N$, it holds conditionally on the event $ \big\{\max_{\overline{\D_{r'}}}  \Psi_N \ge  N \delta^2\big\}$, 
\[
\frac{N\delta^2}{2} =  \frac{\varepsilon^2}{4} (\log N)^{1+\eta}  \le   \frac{1}{\pi \delta^2}  \int_{\D(x_*,\delta)}   \hspace{-.3cm}\Psi_N(z) \d^2z . 
\]
Let $\mathscr{A} = \left\{ z\in \overline{\D_{r''}} : \Psi_N(z) \ge  \frac{\varepsilon^2(\log N)^{1+\eta}}{8}  \right\} $ with $r'' = r'+\epsilon <1$.   
Conditionally on the event 
$ \big\{ \tfrac{\varepsilon^2}{2} (\log N)^{1+\eta} \le \max_{\overline{\D_{r'}}}  \Psi_N$ \ 
$ \le   \max_{\overline{\D}} \Psi_N \le 3N\big\}$, this gives
\[
 \frac{3N|\mathscr{A}|}{\pi \delta^2} + \frac{\varepsilon^2(\log N)^{1+\eta}}{8}   \ge   \frac{1}{\pi \delta^2}  \int_{\D(x_*,\delta)}   \hspace{-.3cm}\Psi_N(z) \d^2z  ,
\]
so that with $\eta=1/2$, 
\begin{equation} \label{sub4}
|\mathscr{A}|  \ge   \frac{\varepsilon^4(\log N)^{3} }{16 N^2} . 
\end{equation}
A variation of the proof of Lemma~\ref{lem:TP} using the estimate \eqref{ww} with $0<r''<1$ and $\gamma=4$ shows that 
$ \E_N\left[  |\mathscr{A}| \right]  \le  C_{r''} N e^{-\varepsilon^2(\log N)^{3/2}/2}$.
By \eqref{sub4}, we conclude that
\begin{align} \notag
 \P_N \left[ \tfrac{\varepsilon^2}{2} (\log N)^{1+\epsilon} \le \max_{\overline{\D_{r'}}}  \Psi_N \le   \max_{\overline{\D}} \Psi_N \le 3N\right] 
& \le \P_N\left[ |\mathscr{A}|  \ge   \frac{\varepsilon^2(\log N)^3 }{8 N^2} \right] \\
&\notag\le 16 \varepsilon^{-4} N^2 \E_N\left[ |\mathscr{A}|  \right]  \\
&\label{sub5}
\le  16 \varepsilon^{-4} C_{r''} N^3 e^{-\varepsilon^2(\log N)^{3/2}/2}  . 
\end{align}
In order to complete the proof, it remains to observe that by combining the estimates \eqref{sub3}, \eqref{sub5} and \eqref{sub8}, we have proved that if $\varepsilon>0$ is sufficiently small, then
\[
 \lim_{N\to+\infty} \P_N\left[ \tfrac{1+\varepsilon}{\sqrt{2}} \log N \le  \max_{\overline{\D_r}}  \Psi_N \right] =0 .
\]

\subsection{Concentration for linear statistics: Proof of Theorem~\ref{thm:concentration}} \label{sect:concentration}

In order to prove Theorem~\ref{thm:concentration}, we need the following  bounds as well as Lemma~\ref{lem:radi}. 

\begin{lemma} \label{lem:concest}
Fix $\eta>0$ and $0<r<1$.  There exists a universal constant $A>0$ such that conditionally on the event $\mathscr{B} = \{ \max_{j=1,\dots, N} |\lambda_j| \le 2\} $, we have for any function $f\in \Co^2(\C)$ $($possibly depending on $N\in\N)$ which is harmonic in $\C \setminus \overline{\D_r}$, 
\begin{equation} \label{Xest1}
| \X(f) | \le 
\eta\log N  \int_\D |\Delta f(z)| \frac{\d^2z}{2\pi} 
+ C  N \sqrt{ |\mathscr{G}_\eta|} \max_\C |\Delta f| , 
\end{equation}
where $\mathscr{G}_\eta := \left\{ z\in \D_r : |\Psi_N(z)| >  \eta\log N  \right\}$ $($with $\eta>0$ possibly depending on $N\in\N)$ and $C>0$. Moreover, there exists a constant $C_r>0$ such that for any $\kappa >0$, 
\begin{equation} \label{Xest2}
\P\left[|\mathscr{G}_\eta| \ge N^{-\kappa} \right] \le C_r N^{\kappa+ 1/8-\eta } . 
\end{equation}
\end{lemma}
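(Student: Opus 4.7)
The starting point is the distributional identity $\Delta\Psi_N=2\pi\X$ together with the resulting integral representation
\[
\X(f)=\frac{1}{2\pi}\int_\C \Delta f(z)\,\Psi_N(z)\,\d^2z
\]
already recorded in \eqref{X3}. Since $f$ is harmonic off $\overline{\D_r}\subset \D$, the integrand is supported on $\D_r$, and I would split the integral into the contributions from the ``good'' set $\D_r\setminus\mathscr{G}$ and the ``bad'' set $\mathscr{G}$. On $\D_r\setminus\mathscr{G}$ the pointwise bound $|\Psi_N(z)|\le(\log N)^{1+\varepsilon}$ is built into the definition of $\mathscr{G}$ and immediately produces the first term in \eqref{Xest1}.

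For the contribution of $\mathscr{G}$, I would factor out $\max_\C|\Delta f|$ and apply Cauchy--Schwarz to obtain
\[
\int_\mathscr{G}|\Psi_N(z)|\,\d^2z\le \sqrt{|\mathscr{G}|}\,\Bigl(\int_{\D_r}\Psi_N(z)^2\,\d^2z\Bigr)^{\!1/2},
\]
which reduces matters to the deterministic $L^2$-bound $\int_{\D_r}\Psi_N^2\,\d^2z\le CN^2$ on the event $\mathscr{B}$. I would prove this by writing $|\Psi_N(z)|\le\sum_{j=1}^N\bigl|\log|z-\lambda_j|\bigr|+N|\varphi(z)|$, applying $\bigl(\sum_{j=1}^N a_j\bigr)^2\le N\sum_{j=1}^N a_j^2$, and observing that on $\mathscr{B}$ every $\lambda_j$ lies in $\overline{\D_2}$, so that
\[
\sup_{|\lambda|\le 2}\int_{\D_r}\bigl|\log|z-\lambda|\bigr|^2\,\d^2z\le \int_{|w|\le 3}|\log|w||^2\,\d^2w<\infty
\]
by local $L^2$-integrability of $\log$ in two dimensions; the $\varphi$-term is trivially $O(1)$ uniformly on $\D$ via \eqref{varphi}. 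Combining the two pieces and dividing by $2\pi$ then yields \eqref{Xest1}.

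The tail estimate \eqref{Xest2} is a direct consequence of \eqref{TB}: Markov's inequality and Fubini give
\[
\P_N[|\mathscr{G}|\ge N^{-\kappa}]\le N^\kappa\,\E_N[|\mathscr{G}|]=N^\kappa\int_{\D_r}\P_N\bigl[|\Psi_N(z)|>(\log N)^{1+\varepsilon}\bigr]\,\d^2z,
\]
and inserting $\lambda=(\log N)^{1+\varepsilon}$ into the pointwise bound $\P_N[|\Psi_N(z)|\ge\lambda]\le(2C_r/\pi)N^{1/8}e^{-\lambda}$ from \eqref{TB} and integrating over $\D_r$ (of bounded area) yields \eqref{Xest2} at once. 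The only step in the proof that is not purely mechanical is the universal $L^2$-bound for $\Psi_N$ from the second paragraph; its derivation genuinely uses the event $\mathscr{B}$ to keep all eigenvalues in a fixed compact neighborhood of $\overline{\D_r}$, which is exactly what ensures that the logarithmic singularities of the summands $\log|\cdot-\lambda_j|$ have uniformly bounded $L^2$-mass over $\D_r$.
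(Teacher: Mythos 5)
Your proof is correct and follows essentially the same route as the paper's: the same splitting of the integral representation \eqref{X3} into the ``good'' set $\D_r\setminus\mathscr{G}$ (pointwise bound) and the ``bad'' set $\mathscr{G}$ (Cauchy--Schwarz plus a deterministic $L^2$-bound on $\Psi_N$ under $\mathscr{B}$, using $\log$'s local $L^2$-integrability), followed by Markov's inequality via the exponential moment estimate for the tail bound \eqref{Xest2}. Incidentally, you correctly work with $\Psi_N$ throughout, whereas the paper's statement and proof contain a typographical slip writing $\Phi_N$ (which elsewhere denotes a Gaussian density) in place of $\Psi_N$.
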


\begin{proof}
Observe that for any $f\in \Co^2(\C)$ which is harmonic in $\C \setminus \overline{\D_r}$,  by definition of $\mathscr{G}_\eta$, we have
\begin{equation} \label{linstatest}
\left| \int_\C\Delta f(z) \Psi_N(z)\ \d^2z \right| \le 
\eta \log N\int_{\D_r} |\Delta f(z)| \d^2z + \max_{\D_r} |\Delta f| \int_{\mathscr{G}_\eta} |\Psi_N(z)|  \d^2z .
\end{equation}
Then, by the Cauchy--Schwartz inequality, 
\[
\int_{\mathscr{G}_\eta} |\Psi_N(z)|  \d^2z \le   
\sqrt{|\mathscr{G}_\eta| \int_{\D}  |\Psi_N(z)|^2  \d^2z } 
\]
and by \eqref{logcharpoly}, it holds conditionally on the event $\mathscr{B}$, 
\[ \begin{aligned}
 \int_{\D}  |\Psi_N(z)|^2  \d^2z &\le  
2\int_{\D}  \left(\textstyle{\sum_{j=1}^N \log|z-\lambda_j|} \right)^2 \d^2z  + \frac{N^2}{2}  \int_\D (1-|z|^2)^2 \d^2z\\
&\le N\left( 2 \int_\D \textstyle{\sum_{j=1}^N \left( \log|z-\lambda_j| \right)^2}  \d^2z + \frac{8\pi}{15} N\right) \\
&\le  C^2 N^2  ,
\end{aligned}\]
where
$\displaystyle C=  \sqrt{ 2 \sup_{|x| \le 2} \int_\D \left( \log|z-x| \right)^2 \d^2z + \frac{8\pi}{15}} $
is a numerical constant. This shows that 
\[
\int_{\mathscr{G}_\eta}|\Psi_N(z)|  \d^2z \le   C N \sqrt{ |\mathscr{G}_\eta|}  .
\]
Then, according to formula \eqref{X3} and \eqref{linstatest}, 
we obtain \eqref{Xest1}. 
In order to estimate the size of the set $\mathscr{G}_\eta$, let us observe that combining \eqref{LG} with $\gamma=1$ and Markov's inequality, we obtain
\[ \begin{aligned}
\E_N\left[ |\mathscr{G}_\eta| \right]  & = \int_{|x| \le r}  \P\left[ |\Psi_N(x)| \ge \eta \log N
\right]  \d^2x \\
&\le N^{-\eta}  \int_{|x| \le r}  \E[e^{|\Psi_N(x)|}]  \d^2x \\
&\le C_r N^{1/8-\eta}  . 
\end{aligned}\]
 By Markov's inequality, this yields the estimate \eqref{Xest2}. 
\end{proof}

\begin{proof}[Proof of Theorem~\ref{thm:concentration}]

By Lemma~\ref{lem:concest}, for any test function $f \in  \mathscr{F}_{r,\kappa}$, it holds conditionally on the event $\mathscr{B} = \{ \max_{j=1,\dots, N} |\lambda_j| \le 2\} $ that for any small $\eta>0$
\[
| \X(f) | \le \eta\log N \int_\D |\Delta f(z)| \frac{\d^2z}{2\pi} 
+ C  N^{1+\kappa} \sqrt{ |\mathscr{G}_\eta|} . 
\]
Hence, this implies that if $N \in \N$ is sufficiently large, 
\[
\P_N\bigg[  \sup\left\{ |\X(f)| : f\in \mathscr{F}_{r,\kappa} \text{ and } \int_\D |\Delta f(z)| \frac{\d^2z}{\pi}  \le 1  \right\} \ge  \eta \log N +1\bigg] 
\le  \P_N\left[   |\mathscr{G}_\eta| \ge N^{-9/8-\kappa}  \right]  + \P_N\left[ \mathscr{B}^c \right] . 
\]
By Lemma~\ref{lem:radi} with $\delta=1$ and \eqref{Xest2}, we have shown that $\P_N[\mathscr{B}^c] \le \sqrt{N} e^{-N}$  and  
$  \P_N\left[   |\mathscr{G}_\eta| \ge N^{-9/8-\kappa}  \right] \le C_r N^{5/4+\kappa-\eta}$.  
By combining these estimates, this completes the proof.
\end{proof}

\section{Thick points: Proof of Theorem~\ref{thm:TP}} \label{sect:TP}

Like the proof of Theorem~\ref{thm:Ginibre}, the proof of Theorem~\ref{thm:TP}  consists of a separate upper--bound \eqref{TPUB}
 and lower--bound (Proposition~\ref{prop:TPLB} below) and it relies on similar techniques. 
In particular, the upper--bound follows directly from Lemma~\ref{lem:TP}. Namely, by Markov's inequality, we have for any $\beta\in[0,1]$ and $\delta>0$, 
\begin{equation} \label{TPUB}
\P_N\left[  |\T_N^\beta(r)| \le N^{-2\beta^2+\delta}\right] \ge 1- \frac{ C_r}{N^\delta} . 
\end{equation}
Then, to obtain the lower--bound, we rely the fact that the field $\Psi_N$ can be well approximated by 
$\X\left(\psi_\epsilon(\cdot-z)\right) $ for $\epsilon = N^{-1/2+\alpha}$ with a small scale $\alpha>0$ and use the estimate \eqref{TP1}.

\begin{proposition} \label{prop:TPLB}
For any $0<r<1$, any $0\le \beta <1/\sqrt{2}$ and any $\delta>0$, we have
\[
\lim_{N\to+\infty}  \P_N\left[ | \T_N^\beta(r) | \ge N^{-2\beta^2-\delta} \right]  =1 .
\]
\end{proposition}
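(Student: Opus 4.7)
The plan is to deduce Proposition~\ref{prop:TPLB} from the super-level set estimate \eqref{TP1} for the regularized field $z \mapsto \X(g_N^z)$ by an averaging/Fubini argument, then close the estimate using the matching upper bound of Proposition~\ref{prop:UB}. Since $\beta < 1/\sqrt{2}$ we have $2\sqrt{2}\beta < \gamma_*/\sqrt{2} = 2$, so I will fix constants $\gamma \in (2\sqrt{2}\beta, 2)$, small $\alpha, \delta_0, \delta_1 > 0$, and $\epsilon_0 \le r/2$, chosen so that
\[
\tfrac{\gamma}{\sqrt{2}}\bigl(\tfrac{1}{2}-\alpha\bigr) > \beta + \delta_1
\qquad\text{and}\qquad
\tfrac{\gamma^2 - \delta_0}{2}\bigl(\tfrac{1}{2}-\alpha\bigr) < 2\beta^2 + \delta.
\]
Set $\epsilon = N^{-1/2+\alpha}$. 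These two inequalities encode exactly how one recovers the target threshold $\beta\log N$ and exponent $2\beta^2 + \delta$ from the super-level set bound on $\X(g_N^z)$.

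The first input is \eqref{TP1}, applied with parameter $\gamma$ and error $\delta_0$: with probability tending to $1$, the random set
\[
S := \bigl\{z \in \D_{\epsilon_0} : \X(g_N^z) \ge \tfrac{\gamma}{\sqrt{2}}\log \epsilon^{-1}\bigr\}
\]
has $|S| \ge \epsilon^{(\gamma^2 - \delta_0)/2}$. The second input shows that the "correction" $\X(\psi(\cdot-z))$ is negligible: the functions $\{\psi(\cdot - z)/\|\phi\|_\infty : z \in \D_{\epsilon_0}\}$ all lie in $\mathscr{F}_{r,0}$ (using $\epsilon_0 \le r/2$), so Lemma~\ref{lem:maX} and Markov's inequality yield $\max_{z \in \D_{\epsilon_0}} |\X(\psi(\cdot-z))| = O(\sqrt{\log N})$ with probability $\to 1$. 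Combined with the decomposition $\X(\psi_\epsilon(\cdot-z)) = \X(g_N^z) + \X(\psi(\cdot-z))$ coming from \eqref{gN}, the mollified field
\[
F(z) := \X(\psi_\epsilon(\cdot-z)) = \int \Psi_N(z+u)\phi_\epsilon(u)\,\d^2 u
\]
then satisfies $F(z) \ge (\beta + \delta_1)\log N$ for every $z \in S$ and all sufficiently large $N$.

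The crucial step is converting this pointwise super-level information on $F$ into a measure lower bound on $\T_N^\beta$. By Fubini (using that $\phi_\epsilon$ is radial),
\[
\int_S F(z)\,\d z = \int \Psi_N(y) G(y)\,\d^2 y, \qquad G := \1_S * \phi_\epsilon,
\]
where $G \ge 0$, $\|G\|_\infty \le 1$, $\int G\,\d^2 y = |S|$, and $\supp(G) \subseteq \D_{2\epsilon_0} \subseteq \D_r$. Splitting the right-hand side at the threshold $\beta \log N$ and invoking Proposition~\ref{prop:UB} (which gives $M := \max_{\overline{\D_r}} \Psi_N \le (1+\delta_1)(\log N)/\sqrt{2}$ with probability $\to 1$) yields
\[
(\beta+\delta_1)(\log N)|S| \le \int_S F\,\d z \le M \cdot |\T_N^\beta| + \beta(\log N)|S|,
\]
hence $|\T_N^\beta| \ge \tfrac{\delta_1 \log N}{M}|S| \ge c\,\epsilon^{(\gamma^2 - \delta_0)/2} \ge N^{-2\beta^2 - \delta}$ by the parameter choice. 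A union bound over the three events of probability $\to 1$ concludes.

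The main obstacle is precisely this transfer from the mollified field $F$ to the pointwise field $\Psi_N$: naive subharmonicity of $\log P_N$ gives only $F(z) \ge \Psi_N(z) - O(N^{2\alpha})$, the wrong direction, so pointwise largeness of $F$ does not directly imply pointwise largeness of $\Psi_N$. The workaround is to use the pointwise lower bound on $F$ only in an integrated sense via Fubini, and to saturate the resulting integral inequality using Proposition~\ref{prop:UB} as an essentially sharp upper bound on $\Psi_N$ on $\T_N^\beta$. The delicate balance of parameters, possible only because Theorem~\ref{thm:gmc} covers the whole subcritical phase up to $\gamma_*=\sqrt{8}$, is what delivers the sharp exponent $2\beta^2$.
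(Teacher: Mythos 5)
Your proof is correct and takes a genuinely different route from the paper's. Both proofs start from the super-level set estimate \eqref{TP1} for $\X(g_N^z)$ and neglect the correction $\X(\psi(\cdot-z))$ via Lemma~\ref{lem:maX}, obtaining a high-probability lower bound on $|S|$ where $S := \{z\in\D_{\epsilon_0} : F(z)\ge(\beta+\delta_1)\log N\}$ and $F(z) = \X(\psi_\epsilon(\cdot-z))$. The divergence is in how this is transferred to $|\T_N^\beta|$. The paper tiles $\D_{\epsilon_0}$ with squares of side $\epsilon$, picks the $\arg\max$ of $F$ in each square, and uses the pointwise bound $F(z) \le \beta\log N + \mathrm{c}\gamma_*\,\epsilon^{-2}|\T_N^\beta\cap\D(z,\epsilon/4)|\log N$ (valid on the event $\A$ where $\max\Psi_N = O(\log N)$) to deduce that each tile meeting $S$ contributes $\gtrsim\delta\epsilon^2$ to $|\T_N^\beta|$, closing with a bounded-overlap count. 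You instead integrate $F$ over $S$, rewrite via Fubini as $\int\Psi_N(y)(\1_S*\phi_\epsilon)(y)\,\d^2y$, and split at the threshold $\beta\log N$, bounding the supra-threshold part by $M\cdot|\T_N^\beta|$ with $M=\max_{\overline{\D_r}}\Psi_N$. Both mechanisms deliver the same proportionality $|\T_N^\beta|\gtrsim |S|$, and both use the same two inputs (Theorem~\ref{thm:gmc}/GMC for the lower bound on $|S|$, and a max upper bound); note that you only need $M=O(\log N)$, exactly what the paper's event $\A$ gives, not the sharp constant $1/\sqrt{2}$ from Proposition~\ref{prop:UB}. Your global averaging argument is arguably cleaner, avoiding the tiling and overlap bookkeeping. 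One small imprecision: Lemma~\ref{lem:maX} plus Markov gives $\P_N\big[\max_{z\in\D_{\epsilon_0}}|\X(\psi(\cdot-z))|\ge\delta'\log N\big]\to 0$ for any fixed $\delta'>0$, rather than literally ``$O(\sqrt{\log N})$ with probability $\to1$''; but this weaker statement is all that is needed to absorb the correction into $\delta_1$, so your parameter choices and conclusion stand.
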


\begin{proof}
We fix parameters $r\in (0,1)$, $\beta \in [0,1/\sqrt{2})$ and we abbreviate $\T_N^\beta= \T_N^\beta(r)$. 
Recall that  $\phi \in \Co^\infty_c(\D_{\epsilon_0})$ is a  mollifier and that for any $z\in\C$, 
 \begin{equation} \label{X5}
\X\left(\psi_\epsilon(\cdot-z)\right) = \int \Psi_N(x)  \phi_\epsilon(z-x) \d^2x ,
\end{equation}
where $\epsilon=\epsilon(N) = N^{-1/2+\alpha}$ -- the scale $0<\alpha<1/2$ will be chosen later in the proof depending on $\beta$ and $\delta$. 
Throughout the proof, we assume that $\epsilon$ is small compared to $\epsilon_0\le 1/4$, we let $\mathrm{c} = \sup_{x\in\C} \phi(x)$ and for a small $\delta\in (0, 1/2]$, 
\[
\Upsilon_N^\beta : = \Big\{ z\in\D_{\epsilon_0}  : \X\big(\psi_\epsilon(\cdot-z)\big) \ge (\beta+ \tfrac\delta8) \log N \Big\} .
\]
We also define the event (of large probability):
\[
\A : = \Big\{ \max_{|x| \le r} \Psi_N (x) \le  \log N\Big\} . 
\]
%
Since $g_N^{z}= \psi_\epsilon(\cdot-z) -\psi(\cdot-z)$ by \eqref{gN},  we have for any $\gamma>0$, 
\[\begin{aligned}
 \P_N\left[ \bigg| \Big\{ z\in\D_{\epsilon_0}  : \X(g_N^{ z}) \ge \tfrac{\gamma + \delta}{\sqrt{2}} \log \epsilon^{-1} \Big\}\bigg| \ge \epsilon^{\gamma^2/2- 3\delta/4}  \right] 
 \le   \P_N\left[   \max_{z\in \D_{\epsilon_0}} \left| \X\left(  \psi(\cdot-z) \right)\right| \ge \tfrac{\delta}{2\sqrt{2}} \log \epsilon^{-1} \right] \ \  \\
+ 
 \P_N\left[ \bigg| \Big\{ z\in\D_{\epsilon_0}  : \X\big(\psi_\epsilon(\cdot-z)\big) \ge \tfrac{\gamma+\delta/2}{\sqrt{2}} \log \epsilon^{-1} \Big\}\bigg| \ge \epsilon^{\gamma^2/2-3\delta/4} \right] . 
  \end{aligned}\]
Then, using the estimates \eqref{TP1} and \eqref{max2}, we obtain that for any $0 \le \gamma < \frac{\gamma_*}{\sqrt{2}}$, 
\begin{equation*}
\lim_{N\to+\infty}  \P_N\left[ \bigg| \Big\{ z\in\D_{\epsilon_0}  : \X\big(\psi_\epsilon(\cdot-z)\big) \ge \tfrac{\gamma+\delta/2}{\sqrt{2}} \log \epsilon^{-1} \Big\}\bigg| \ge \epsilon^{\gamma^2/2-3\delta/4} \right]  =1 .
\end{equation*}
Hence, choosing the scale $\alpha = \frac{\delta}{8\sqrt{2}(\gamma+\delta/2)}$  with $\gamma = \sqrt{8}\beta$, this implies that for any $0\le \beta <1/\sqrt{2}$, 
\begin{equation} \label{TP3}
\lim_{N\to+\infty}  \P_N\left[ | \Upsilon_N^\beta | \ge N^{-2\beta^2-\delta/2} \right]  =1 .
\end{equation}

\medskip

By  formula \eqref{X5} and the definition of $\beta$-thick points,  we have conditionally on $\A$, for any $z\in \D_{\epsilon_0}$, 
 \begin{equation} \label{TP4}
 \begin{aligned}
\X\left(\psi_\epsilon(\cdot-z)\right)  &= \int_{\D_r\setminus\T_N^\beta} \Psi_N(x)  \phi_\epsilon(x-z) \d^2x   +  \int_{\T_N^\beta} \Psi_N(x)  \phi_\epsilon(x-z) \d^2x \\
&\le  \beta \log N + \mathrm{c}  \big|  \T_N^\beta \cap \D(z, \tfrac\epsilon4) \big| \epsilon^{-2} \log N , 
\end{aligned}
\end{equation}
where we used that $ \phi_\epsilon(x-z) \le \mathrm{c} \epsilon^{-2} \1_{|x-z| \le \epsilon/4}$ at the last step.
Now, let us tile the disk $\D_{\epsilon_0}$ with squares of area~$\epsilon^{2}$. To be specific, let $M = \lceil \epsilon^{-1} \rceil$ and 
$\square_{i,j} = [i\epsilon, (i+1)\epsilon] \times [j\epsilon, (j+1)\epsilon]$ for all integers $i,j \in [-M,M]$. 
Note that since $z\mapsto \X\left(\psi_\epsilon(\cdot-z)\right) $ is a continuous process, for any $i,j \in \mathbb{Z}  \cap [-M,M]$, we can choose
\[
z_{i,j} = \arg\max \big\{ \X\left(\psi_\epsilon(\cdot-z)\right) :  z\in \square_{i,j}  \big\} . 
\]
The point of this construction is that we have the deterministic bound
 \begin{equation} \label{TP5}
|\Upsilon_N^\beta| \le\  \epsilon^2  \hspace{-.5cm} \sum_{i,j \in \mathbb{Z}  \cap [-M,M] } \hspace{-.3cm}  \1_{z_{i,j} \in \Upsilon_N^\beta} .
\end{equation}
Moreover if $z_{i,j} \in \Upsilon_N^\beta$,  \eqref{TP4} shows that  conditionally on $\A$, 
\[
 \big|  \T_N^\beta \cap \D(z_{i,j}, \frac\epsilon4) \big| \ge  \frac{\delta \epsilon^2}{8\mathrm{c} }  .
\]
By \eqref{TP5}, this implies that 
\[
|\Upsilon_N^\beta| \le  \frac{ 8 \mathrm{c}}{\delta}  \hspace{-.1cm}  \sum_{i,j \in \mathbb{Z}  \cap [-M,M] } \hspace{-.3cm}  \1_{z_{i,j} \in \Upsilon_N^\beta}  \big|  \T_N^\beta \cap \D(z_{i,j}, \frac\epsilon4) \big| . 
\] 
Since the squares $\square_{i,j}$ are disjoint (except for their sides) and $z_{i,j} \in \square_{i,j}$, we further have the deterministic bound
\[
  \sum_{i,j \in \mathbb{Z}  \cap [-M,M] } \hspace{-.3cm}    \big|  \T_N^\beta \cap \D(z_{i,j}, \frac\epsilon4) \big|   \le 4 |\T_N^\beta| . 
\]
Hence, we conclude that conditionally on $\A$, for $0\le \beta <1/\sqrt{2}$ and $\delta>0$ sufficiently small (but independent of $N$), 
\[
|\T_N^\beta|  \ge \frac{\delta}{32\mathrm{c} }   |\Upsilon_N^\beta| . 
\]
Finally, according to Proposition~\ref{prop:UB},  we have $\P_N[\A] \to 1$ as $N\to+\infty$, so that 
by combining the previous estimate with \eqref{TP3},  this completes the proof.
\end{proof}

\section{Gaussian approximation} \label{sect:clt}

In this section, we turn to the proof of our main asymptotic result:  Proposition~\ref{thm:exp}. 
Its proof relies on the so-called \emph{Ward's identity} or \emph{loop equation} which have already been used in \cite{AHM15} as well as \cite{BBNY17, BBNY19} to study  the fluctuations of linear statistics of eigenvalues of random normal matrices and two--dimensional Coulomb gases respectively. 
For completeness, we provide a detailed proof of  the loop equation that we use in Section~\ref{sect:Ward}. 
Then, to show that the error terms in this equation are small, we rely on the determinantal structure of the  ensemble obtained after making a small perturbation of the potential $Q$ and on a local approximation of its correlation kernel (see Proposition~\ref{prop:approx} below).
This approximation is justified in Section~\ref{sect:approx} based on the method from \cite{AHM15} and we use it to prove that the error terms are indeed negligible as $N\to+\infty$ in Sections \ref{sect:est}--\ref{sect:error3}. 
Finally, we finish the proof of Proposition~\ref{thm:exp} in Section~\ref{sect:proof} by using a classical argument introduced by Johansson \cite{Johansson98} to prove a CLT for linear statistics of $\beta$-ensembles on $\R$. 
Before starting our analysis, we need to introduce further notations. 

\subsection{Notation} \label{sect:notation}

For any $N\in\N$, we let
\begin{equation} \label{polynom}
\mathscr{P}_N = \{\text{analytic polynomials of degree}<N \} .
\end{equation}
Let us recall that by Cauchy's formula, if $f$ is smooth and compactly supported inside $\D$, we have
\begin{equation} \label{cauchy}
f(z) =  \int  \frac{\dbar f(x) }{z-x}\sigma(\d x) , 
\end{equation}
where $\sigma(\d x) = \frac{1}{\pi} \1_{\D} \d^2 x$ denotes the circular law.
Throughout Section~\ref{sect:clt}, we fix $n\in\N$, $\vec{\gamma} \in [-R,R]^n$, $\vec{z} \in \D_{\epsilon_0}^{\times n}$ and we let $g_N = g_N^{\g, \vec{z}}$ be as in formula \eqref{g}.
We recall that as $\vec{z} \in  \D_{\epsilon_0}^{\times n}$ varies, the functions $x\mapsto g_N^{\g, \vec{z}}(x)$ remain smooth and compactly supported inside $\D_{2\epsilon_0}$ for all $N\in\N$. 
Let us define for $t>0$, 
\begin{equation} \label{Pbias}
\d \P_N^* : =\frac{e^{t\X(g_N)}}{\E_N [e^{t\X(g_N)}]} \d \P_N .
\end{equation}
The biased measure $\P_N^*$ corresponds to an ensemble of the type \eqref{Hamiltonian} with a \emph{perturbed potential} $Q^* := Q- \frac{tg_N}{2N}$. Therefore, under  $\P_N^*$, $\lambda=(\lambda_1, \dots \lambda_N)$ also forms a determinantal point process on $\C$ with a correlation kernel:
\begin{equation} \label{kernel1}
k^*_N(x,z) :=   {\textstyle \sum_{k=0}^{N-1}} p_k^*(x) \overline{p_k^*(z)} ,
\end{equation}
where $(p_0^*, \dots , p_{N-1}^*)$ is an orthonormal basis  of $\mathscr{P}_N$ with respect to the inner product inherited from $L^2(e^{-2N Q^*})$ such that $\operatorname{deg}(p_k^*) = k$ for $k=0,\dots, N-1$. 
 We denote 
\begin{equation} \label{kernel2}
K_N^*(x,z) : = k_N^*(x,z) e^{-N Q^*(x) - N Q^*(z)} 
\end{equation} 
and we define the \emph{perturbed one--point function}: $u_N^*(x) : = K_N^*(x,x) \ge 0 $. 
By definitions, we record that for any $N\in\N$ and all $x\in \C$,
\begin{equation} \label{mass}
\int_{\C} k^*_N(x,z) \d^2z = u_N^*(x)
\qquad\text{and}\qquad
\int_{\C} u_N^*(z) \d^2z =N.
\end{equation}
Finally, we set $\widetilde{u}_N^*  : = u_N^* - \sigma$, so that for any smooth function $f:\C\to\C$, we have  
\begin{equation} \label{1pt}
\E_N^*[\X(f)] =  \int f(x) \widetilde{u}_N^*(x) \d^2x .
\end{equation}

\begin{convention}  \label{conv}
As in proposition~\ref{thm:exp}, we fix a scale $0<\alpha<1/2$ and let $\epsilon=\epsilon(N) = N^{-1/2+\alpha}$. We also  fix $\beta>1$ and let
$\delta= \delta(N) = \sqrt{(\log N)^\beta /N}$ as in Proposition~\ref{prop:approx} below.
Throughout Section~\ref{sect:clt}, we assume that the dimension $N\in\N$ is sufficiently large so that $  \delta/\epsilon  \le 1/4$ and $( \delta/\epsilon )^\ell \le N^{-1}$ for a fixed $\ell\in\N$ -- e.g. we can pick $\ell = \lfloor 2/\alpha \rfloor$.  
Moreover, $C, N_0 >0$ are positive constant which may change from line to line and depend only on  the mollifier $\phi$, the parameters $R,\alpha,\beta, \epsilon_0>0$, $n, \ell\in\N$ and $t\in[0,1]$ above. Then, we  write $A_N=\O(B_N)$ if there exists such a constant $C>0$ such that $0\le A_N \le C B_N$.  
\end{convention}

\subsection{Ward's identity} \label{sect:Ward}

Formula \eqref{ward} below is usually called \emph{Ward's equation} or \emph{loop equation} and the terms $\mathfrak{T}^k_N$ for $k=1,2,3$ should be treated as corrections because of the factor $1/N$ in front of them. 
This equation is the key input of a method pioneered by Johansson \cite{Johansson98} to establish that linear statistics of $\beta$-ensembles are  asymptotically Gaussian. In the following, we follow the approach of Ameur--Hedenmalm--Makarov \cite[Section 2]{AHM15} who applied Johansson's method to study the fluctuations of the eigenvalues of random normal matrices, including the Ginibre ensemble.

\begin{proposition} \label{prop:ward}
If $g\in \Co^2_c(\D)$, we have  for any $N\in\N$ and $t\in(0,1]$,
\begin{equation} \label{ward}
\E_N^*\left[ \X(g) \right] =
 \Sigma(g;g_N)  +\frac{1}{N} \left(\mathfrak{T}^1_N(g) + \mathfrak{T}^2_N(g) - \mathfrak{T}^3_N(g)\right) , 
\end{equation}
where $\Sigma(\cdot;\cdot)$ denotes the quadratic form associated with \eqref{Sigma}, 
\[ \begin{aligned}
\mathfrak{T}^1_N(g) := \int \Big( t \dbar g (x)  \partial g_N(x) +\frac 14 \Delta g(x) \Big) \widetilde{u}_N^*(x)  \d x   , \qquad
\mathfrak{T}^2_N(g) :=  \iint \frac{\dbar g(x) }{x-z} \widetilde{u}_N^*(z) \widetilde{u}_N^*(x) \d^2 x\d^2z
\end{aligned}\]
and 
\[
\mathfrak{T}^3_N(g) :=  \iint \frac{\dbar g(x) }{x-z}  | K_N^*(x,z) |^2 \d^2 x\d^2z .
\]
\end{proposition}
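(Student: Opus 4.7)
The plan is to derive \eqref{ward} from the Stokes integration-by-parts identity for the partition function of $\P_N^*$, combined with the equilibrium equation \eqref{EL} for the circular law and the Cauchy reproducing formula \eqref{cauchy}. The starting point is the identity
\[
0 = \int \partial_{x_j}\bigl[v(x_j)\, e^{-\mathrm{H}_N^*(x)}\bigr] \prod_k \d^2 x_k ,
\]
valid for any smooth compactly supported test field $v:\C\to\C$, where $\mathrm{H}_N^*$ is the Hamiltonian associated with the perturbed potential $Q^* = Q - \tfrac{t}{2N} g_N$. Since $\partial_{x_j}\mathrm{H}_N^*(x) = -\sum_{k\neq j}(x_j-x_k)^{-1} + 2N \partial Q^*(x_j)$, summing over $j$ and expressing expectations in terms of the one-point density $u_N^*$ and the determinantal pair-correlation $u_N^*(x) u_N^*(z) - |K_N^*(x,z)|^2$ yields the raw Ward equation
\[
\int \partial v\, u_N^*\, \d^2 x + \iint \frac{v(x)}{x-z}\bigl[u_N^*(x) u_N^*(z) - |K_N^*(x,z)|^2\bigr] \d^2 x \d^2 z = 2N \int v\, \partial Q^*\, u_N^*\, \d^2 x .
\]

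Specializing to $v = \dbar g$ gives $\partial v = \tfrac14 \Delta g$, and since $\partial Q(x) = \bar x/2$ and $\partial Q^* = \partial Q - \tfrac{t}{2N}\partial g_N$, the right-hand side splits as $N\int \dbar g\, \bar x\, u_N^* - t\int \dbar g\, \partial g_N\, u_N^*$. The cancellation of the extensive $\O(N)$ and $\O(N^2)$ contributions relies on three ingredients: the equilibrium equation \eqref{EL}, which gives $\bar x = 2 \partial \varphi(x) = \int (x-y)^{-1}\sigma(\d y)$ for $x\in\supp(\dbar g)\subset \D$; the Cauchy formula \eqref{cauchy}, which for $g\in\Co^2_c(\D)$ yields $\int \dbar g(x)(x-z)^{-1}\sigma(\d x) = -g(z)$; and the identity $\int \dbar g\, \bar x\, \d\sigma = -\int g\, \d\sigma$, obtained from $\dbar \bar x = 1$ and the vanishing of $g$ near $\partial\D$.

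I would then expand $u_N^*(x) u_N^*(z) = \widetilde u_N^*(x)\widetilde u_N^*(z) + N[\widetilde u_N^*(x)\sigma(z) + \sigma(x)\widetilde u_N^*(z)] + N^2 \sigma(x)\sigma(z)$ in the double integral (reading $\widetilde u_N^* = u_N^* - N\sigma$, consistently with \eqref{1pt}). By the three identities above, the four resulting double integrals evaluate to $\mathfrak{T}_N^2$, $N \int \dbar g\, \bar x\, \widetilde u_N^*$, $-N\E_N^*[\X(g)]$, and $-N^2 \int g\, \d\sigma$ respectively; similarly $N\int \dbar g\, \bar x\, u_N^* = N\int \dbar g\, \bar x\, \widetilde u_N^* - N^2 \int g\, \d\sigma$. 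The $\O(N)$ and $\O(N^2)$ pieces cancel exactly between the two sides of the Ward equation, leaving
\[
N\E_N^*[\X(g)] = \int \tfrac{\Delta g}{4}\, u_N^* + t \int \dbar g\, \partial g_N\, u_N^* + \mathfrak{T}_N^2 - \mathfrak{T}_N^3 .
\]
Substituting $u_N^* = \widetilde u_N^* + N\sigma$ on the right, the $N\sigma$-contribution to the first integral vanishes since $\int_\D \Delta g\, \d^2 x = 0$ by the divergence theorem (and $g \Subset \D$), while the $N\sigma$-contribution to the second produces the term $Nt \Sigma(g;g_N)$ with $\Sigma(g;g_N) = \int \dbar g\, \partial g_N\, \d\sigma$. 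Dividing by $N$ then yields \eqref{ward}.

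The main point requiring care is the conditional convergence of the double integrals containing the Cauchy kernel $(x-z)^{-1}$ and the various interchanges of integration used in the decomposition above. Since $u_N^*$ is smooth (a finite sum of polynomials times a Gaussian), $\dbar g$ is smooth and compactly supported, and $\sigma$ is bounded with compact support, each inner integral is a convergent improper Riemann integral and all the rearrangements are standard; I do not anticipate any substantial obstacle beyond careful bookkeeping with principal-value integrals.
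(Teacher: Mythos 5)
Your proof is correct and follows essentially the same route as the paper's: both start from the Ward/loop identity obtained by integrating by parts against the perturbed Gibbs density, rewrite the pair term via the determinantal two-point function $u_N^*(x)u_N^*(z)-|K_N^*(x,z)|^2$, expand $u_N^*$ around $N\sigma$, and use the equilibrium relation $\partial Q=\tfrac12\int\tfrac{1}{\cdot-y}\sigma(\d y)$ on $\D$ together with Cauchy's formula to cancel the $\O(N)$ and $\O(N^2)$ contributions. (As you correctly observe, the leading term that emerges is $t\,\Sigma(g;g_N)$; this matches the paper's own derivation in its eq.~(5.13), and is the form actually used later, so the $t$-less form in the proposition statement appears to be a typo.)
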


\begin{proof}
An integration by parts gives for any $h\in \Co^1(\C)$ with compact support: 
\begin{equation} \label{ward1}
\E_N^*\left[ \sum_{j\neq k} \frac{h(x_j)}{x_j-x_k} +  \sum_{j} \partial h(x_j) - \sum_{j} h(x_j) \left(  2N \partial Q - t\partial g_N\right)(x_j)  \right] =0 .
\end{equation}
Observe that with $h = \dbar g$, by \eqref{1pt} and \eqref{cauchy}, it holds
\begin{equation} \label{X1}
\E_N^*\left[ \X(g) \right] = \int  g(z)\ \widetilde{u}_N^*(z) \d^2 z  =   \iint  \frac{h(x) }{z-x}\sigma(\d x)\ \widetilde{u}_N^*(z) \d^2z . 
\end{equation}
On the one--hand, using the determinantal formula for the second correlation function of the ensemble $\P_N^*$, we have
\begin{equation} \label{ward2}
\E_N^*\left[ \sum_{j\neq k} \frac{h(x_j)}{x_j-x_k}  \right]
= \iint \frac{h(x) }{x-z} u_N^*(x) u_N^*(z)  \d^2z\d^2x - \frac{1}{2}  \iint \frac{h(x) - h(z)}{x-z} \left| K_N^*(x,z) \right|^2  \d^2z\d^2x, 
\end{equation}
where the second term is equal to $\mathfrak{T}^3_N(g)$ and  the first term on the RHS satisfies
\begin{equation} \label{ward3}
\begin{aligned}
\frac 1N \iint \frac{h(x) }{x-z} u_N^*(x) u_N^*(z) \d^2z\d^2x
 & = N \iint \frac{h(x) }{x-z}  \sigma(\d z)\sigma(\d x) +  \iint \frac{h(x) }{x-z}  \sigma(\d z) \widetilde{u}_N^*(x) \d^2x \\
 &\qquad+ \iint \frac{h(x) }{x-z}  \sigma(\d x) \widetilde{u}_N^*(z) \d^2z
 + \frac 1N \mathfrak{T}^2_N(g) . 
  \end{aligned}
\end{equation}
On the other--hand by \eqref{EL}, $\partial Q (x) = \partial \varphi(x) = \frac 12 \int \frac{1}{x-z} \sigma(\d z) $ for all $x\in\D$ and as= $\supp(h) \subset \D$, we also have
\begin{align} \notag
\E_N^*\left[ \sum_{j} h(x_j) \partial Q (x_j)  \right] & =  N  \int h(x) \partial Q (x) \sigma(\d x) + \int h(x) \partial Q (x)\ \widetilde{u}_N^*(x) \d x \\
&  \label{ward4}
= \frac{N}{2}   \iint \frac{h(x) }{x-z}  \sigma(\d z)\sigma(\d x) 
 +\frac 12 \iint \frac{h(x) }{x-z}  \sigma(\d z)\ \widetilde{u}_N^*(x) \d x . 
\end{align}
Combining formulae \eqref{ward2}, \eqref{ward3} and \eqref{ward4}, we obtain
\[ \begin{aligned}
\E_N^*\left[ \frac 1N \sum_{j\neq k} \frac{h(x_j)}{x_j-x_k} - 2  \sum_{j} h(x_j) \partial Q (x_j)   \right] =  \iint \frac{h(x) }{x-z}  \sigma(\d x) \widetilde{u}_N^*(z) \d^2z 
 + \frac 1N \left( \mathfrak{T}^2_N(g)  - \mathfrak{T}^3_N(g) \right) . 
\end{aligned} \]
By formula \eqref{X1}, this implies that
\begin{equation} \label{ward5}
\E_N^*\left[ \frac 1N \sum_{j\neq k} \frac{h(x_j)}{x_j-x_k} - 2  \sum_{j} h(x_j) \partial Q (x_j)   \right]  
 =  - \E_N^*\left[ \X(g) \right]   +  \frac 1N \left( \mathfrak{T}^2_N(g)  - \mathfrak{T}^3_N(g) \right). 
 \end{equation}
Combining formulae \eqref{ward1} and \eqref{ward5} with $h = \dbar f$, this shows that
\begin{equation} \label{ward6}
\E_N^*\left[ \X(g) \right]  =  \frac 1N \E_N^*\left[ \sum_j  \Big(t \dbar g(x_j)  \partial g_N(x_j) +\frac 14 \Delta g(x_j) \Big) \right]
 +  \frac 1N \left( \mathfrak{T}^2_N(g)  - \mathfrak{T}^3_N(g) \right) ,
\end{equation} 
where we used that $\partial \dbar g = \frac 14 \Delta g$.  Finally using that $g\in\Co^2_c(\D)$,
$\displaystyle \int \Delta g(x) \sigma(\d x) =0 $ and by \eqref{Sigma}, we conclude that
\begin{equation} \label{ward7}
\frac 1N \E_N^*\left[ \sum_j \Big(\dbar g(x_j)  \partial g_N(x_j) +\frac 14 \Delta g(x_j) \Big)\right] 
=   t\Sigma(g;g_N)  + \frac 1N \int \Big(t\dbar g(x)  \partial g_N(x) +\frac 14 \Delta g(x) \Big) \widetilde{u}_N^*(x)  \d x   . 
\end{equation}
Combining formulae \eqref{ward6} and \eqref{ward7}, this  completes the proof.
\end{proof}

\subsection{Kernel approximation} \label{sect:kernel}

Recall that the probability measure $\P_N^*$ induces a determinantal process on $\C^N$ with correlation kernel $K_N^*$, \eqref{kernel2}.
In order to control the RHS of \eqref{ward}, we need the asymptotics of the this kernel as the dimension $N\to+\infty$. 
In general, this is a challenging problem, however it is expected that  $K_N^*$ decays quickly off diagonal and its asymptotics near the diagonal are universal in the sense that they are similar to that of the Ginibre correlation kernel $K_N$. In Section~\ref{sect:approx}, using the method from Ameur--Hedenmalm--Makarov \cite{AHM10,AHM15} which relies on H\"ormander's inequality and the properties of reproducing kernels,  we compute the asymptotics of $K_N^*$ near the diagonal. 
Recall that $g_N = g_N^{\g, \vec{z}}$ as in \eqref{g} and  our  Conventions~\ref{conv}. 
Let us also define the \emph{approximate Bergman kernel}:
\begin{equation} \label{kapprox} 
k^\#_N(x,w) := \frac{N}{\pi} e^{N x \overline{w}}  e^{-t \Upsilon_N^w(x-w) } ,
\qquad x,w\in\C. 
\end{equation}
where $\Upsilon_N^w(u) : = \sum_{i=0}^{\ell} \frac{u^i}{i!} \partial^i g_N(w)$. 
 We also let
\begin{equation} \label{K0}
K^\#_N(x,w) := k^\#_N(x,w)  e^{-NQ^*(z) - N Q^*(w)} ,
\qquad x,w\in\C. 
\end{equation}

Let us state our main approximation result for the \emph{perturbed} kernels which corresponds to \cite[Lemma A.1]{AHM11} in the case where the test function $g_N$ depends on $N\in\N$ and develops logarithmic singularities as $N\to+\infty$. Because of these  significant differences,  we adapt the proof in Section~\ref{sect:BA}.

\begin{proposition} \label{prop:approx}
Let $\vartheta_N  : = \1_\D + \sum_{k=1}^n \epsilon_k^{-2} \1_{\D(z_k, \epsilon_k)}$ and  $\delta= \delta(N) : = \sqrt{(\log N)^\beta /N}$ for $\beta>1$. There exist constants $L,  N_0>0$   such that for all $N\ge N_0$, we have for any $z\in \D_{1-2\delta}$ and all $w\in\D(z,\delta)$, 
\[
\left|K^*_N(w,z) - K^\#_N(w,z) \right| \le   L  \vartheta_N(z) 
\]
\end{proposition}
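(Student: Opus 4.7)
The strategy is Hörmander's $\dbar$-method for near-diagonal asymptotics of Bergman-type reproducing kernels, following the template of \cite{Berman09} as adapted in \cite{AHM10,AHM15} for random normal matrices. The true kernel $k_N^*(\cdot,z)$ is the reproducing kernel of $\mathscr{P}_N$ in $L^2(e^{-2NQ^*})$, whereas $k_N^\#(\cdot,z)$ is entire in $x$ (both $e^{Nx\overline z}$ and the polynomial $\Upsilon_N^z(x-z)$ are holomorphic in $x$) but not itself a polynomial of degree $<N$. The plan is to construct a polynomial $p_z\in\mathscr{P}_N$ that is pointwise close to $k_N^\#(\cdot,z)$ on $\D(z,\delta_N)$, and then to identify $p_z$ with $k_N^*(\cdot,z)$ via reproducing identities. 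To do so, fix a smooth radial cutoff $\chi_z$ with $\chi_z=1$ on $\D(z,\delta_N/2)$ and $\chi_z=0$ off $\D(z,\delta_N)$. Holomorphicity of $k_N^\#(\cdot,z)$ implies $\dbar[\chi_z k_N^\#(\cdot,z)] = (\dbar\chi_z)\,k_N^\#(\cdot,z)$ is supported on the annulus $\delta_N/2\le|x-z|\le\delta_N$.

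Solve the equation
\[
\dbar u_z(x) = (\dbar\chi_z)(x)\,k_N^\#(x,z)
\]
using Hörmander's $L^2$-estimate with the weight $e^{-2NQ^*(x)}(1+|x|^2)^{-2(N+1)}$; the polynomial factor $(1+|x|^2)^{-2(N+1)}$ forces the entire function $p_z := \chi_z k_N^\#(\cdot,z)-u_z$ to be a polynomial of degree $<N$, hence $p_z\in\mathscr{P}_N$. Hörmander's inequality then yields
\[
\int_\C |u_z|^2 e^{-2NQ^*}(1+|x|^2)^{-2(N+1)}\d^2 x \le \int_\C \frac{|(\dbar\chi_z)\,k_N^\#(\cdot,z)|^2}{2N\,\Delta Q^*(x)}\,e^{-2NQ^*}(1+|x|^2)^{-2(N+1)}\d^2 x ,
\]
and, since $u_z$ is holomorphic on $\D(z,\delta_N/2)$ (where $\dbar\chi_z$ vanishes), standard weighted sub-mean-value bounds at microscopic scale $1/\sqrt N$ upgrade this into a pointwise estimate on $|u_z(w)|\,e^{-NQ^*(w)}$ for $w\in\D(z,\delta_N/4)$.

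To control the right-hand side, observe that on the support of $\dbar\chi_z$ we have $|x-z|\ge\delta_N/2$, and a direct computation using $2NQ(x)+2NQ(z)-2N\operatorname{Re}(x\overline z)=N|x-z|^2$ shows
\[
|k_N^\#(x,z)|^2 e^{-2NQ^*(x)-2NQ^*(z)} \le C N^2 e^{-cN\delta_N^2} = CN^2 e^{-c(\log N)^\beta},
\]
which is smaller than any negative power of $N$. Moreover, $\Delta Q^*=2-\tfrac{t}{2N}\Delta g_N$ satisfies $\Delta Q^*\ge 1$ uniformly for large $N$, since $|\Delta g_N|/N \le C\,\epsilon^{-2}/N = C\,N^{-2\alpha}$ by construction of $g_N$; hence Hörmander's hypothesis is not obstructed. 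Coupling this with the approximate reproducing property
\[
\int q(x)\,\overline{k_N^\#(x,z)}\,e^{-2NQ^*}\d^2 x = q(z) + \text{error},\qquad q\in\mathscr{P}_N,\; \|q\|_{L^2(e^{-2NQ^*})} = 1,
\]
verified by a Gaussian / saddle-point computation using the explicit formula for $\Upsilon_N^z$, identifies $p_z$ with $k_N^*(\cdot,z)$ up to the same order of error, delivering the desired estimate on $|K_N^*(w,z)-K_N^\#(w,z)|$.

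The main obstacle is making the saddle-point step uniform in $z$ even when $z$ lies close to a singular point $z_k$ of $g_N$. In such a region, although $\Delta Q^*$ itself stays positive, the holomorphic polynomial $\Upsilon_N^z$ only captures the $\partial$-part of the local expansion of $g_N$ at $z$, and the discarded $\dbar$-derivatives $\dbar^j g_N(z)$, $\partial^i\dbar^j g_N(z)$ are of size $\epsilon_k^{-i-j}$ inside $\D(z_k,\epsilon_k)$; this degrades the approximate reproducing property by a multiplicative factor bounded by $\epsilon_k^{-2}\,\mathds{1}_{\D(z_k,\epsilon_k)}(z) = \vartheta_N(z)$, which is exactly the source of the $\vartheta_N(z)+1$ term in the final bound. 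The hierarchy $\delta_N/\epsilon=\eta(N)\to 0$ plays the crucial technical role of cleanly separating the $\delta_N$-scale $\dbar$-geometry from the $\epsilon$-scale singular geometry of $g_N$; tracking this separation, together with verifying that $p_z$ is genuinely a polynomial of degree $<N$ (via the choice of weight), forms the technical core of the argument.
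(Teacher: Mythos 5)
Your outline does track the two ingredients of the paper's proof---a Hörmander $\dbar$-step to project the approximate kernel onto $\mathscr{P}_N$, and an approximate-reproducing computation whose Taylor remainder produces the factor $\vartheta_N(z)+1$---so the overall architecture is the right one. However there are two concrete gaps.

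First, the weight you use in Hörmander's inequality, $e^{-2NQ^*(x)}(1+|x|^2)^{-2(N+1)}$, does not force the resulting entire function to be a polynomial of degree $<N$. Since $e^{-2NQ^*}\sim e^{-N|x|^2}$ off the compact support of $g_N$, any entire function of order $2$ and sufficiently small type is square-integrable against this weight, so $A^2$ of this weight is strictly larger than $\mathscr{P}_N$ and the claim ``$p_z\in\mathscr{P}_N$'' fails. What is needed is a strictly subharmonic weight whose exponent grows only like $(N+1)\log|x|$ at infinity, so that $A^2(e^{-2W})=\mathscr{P}_N$ \emph{exactly}; the paper uses $W(x)=N(\varphi(x)+1/2)+\log\sqrt{1+|x|^2}$ (with $\varphi$ the logarithmic potential of the circular law), applies Hörmander in $L^2(e^{-2W})$, and only afterwards converts the $L^2(e^{-2W})$-bound into a $\|\cdot\|_{Q^*}$-bound using $W\ge NQ$ on $\D$ and the fact that the $Q$- and $Q^*$-norms differ by at most a fixed power of $N$ (since $\|g_N\|_\infty=\O(\log\epsilon^{-1})$).

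Second, ``identify $p_z$ with $k_N^*(\cdot,z)$ via reproducing identities'' is where the real work lies and you have not supplied the mechanism. What the Hörmander step actually produces is $p_z = K^*_N[\chi_z k^\#_N(\cdot,z)]$, the $L^2(e^{-2NQ^*})$-projection of the truncated approximate kernel---not $k^*_N(\cdot,z)$ itself. To close the gap the paper proves a separate lemma: for any $f$ analytic on $\D(z,2\delta_N)$,
\begin{equation*}
\bigl| f(z) - K^\#_N[\chi_z f](z)\bigr| \le \tfrac{L}{\sqrt{N}}\bigl(\vartheta_N(z)+1\bigr)e^{NQ^*(z)}\|f\|_{Q^*},
\end{equation*}
which is established not by Hörmander but by a Cauchy--Pompeiu integration by parts, with the $\vartheta_N$ coming from a Taylor bound on $\partial g_N(x)-\partial\Upsilon_N^z(x-z)$. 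This is then applied to $f(x)=k^*_N(x,w)$ (not to an arbitrary unit-norm $q\in\mathscr{P}_N$), using the reproducing property $\|k^*_N(\cdot,w)\|_{Q^*}^2=k^*_N(w,w)$ and the uniform bound $u_N^*\le CN$, and finally a Hermiticity/complex-conjugation identity $\overline{K^\#_N[\chi_z k^*_N(\cdot,w)](z)}=K^*_N[\chi_z k^\#_N(\cdot,z)](w)$ is what splices this estimate together with the Hörmander one. Your ``approximate reproducing property'' statement for a generic $q$ is in the right direction but does not, as written, produce the required pointwise comparison of the two kernels, and the Hermiticity swap---which is essential---is absent from your outline.
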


\begin{remark} \label{rk:uniformity}
{\normalfont
We emphasize again that the constants $L,  N_0>0$ do not depend on $\vec{\gamma}\in[-R,R]^n$, $\vec{z} \in \D_{\epsilon_0}^{\times n}$, nor $t\in[0,1]$. Consequently, the estimates of Sections~\ref{sect:est}--\ref{sect:error3}  bear the same uniformity even though this will not be emphasized to lighten the presentation.
In fact, since the parameter $t\in(0,1]$ is not relevant for our analysis, we will also assume that $t=1$ to simplify notation -- this amounts to changing the parameters $\vec{\gamma}$ to $t\vec{\gamma}$. 
} \hfill $\blacksquare$
\end{remark}

In the remainder of this section and in Section~\ref{sect:est},  we discuss some consequences of the approximation of Proposition~\ref{prop:approx}. 
Then, in Sections~\ref{sect:error1}--\ref{sect:error3}, we control the \emph{error} terms $\mathfrak{T}^k_N(g_N)$ for $k=1,2,3$ in order to complete the proof of Proposition~\ref{thm:exp} in Section~\ref{sect:proof}.

\medskip

By definitions, with $t=1$, we have for any $z\in\C$,
\[
K^\#_N(z,z) = \frac{N}{\pi} e^{N|z|^2 - g_N(z) - 2N Q^*(z)} = \frac{N}{\pi} . 
\]
Then according to \eqref{1pt} and by taking  $w=z$ in Proposition~\ref{prop:approx}, this implies that for any $z\in \D_{1-2\delta}$, 
\begin{equation} \label{1ptest}
| \widetilde{u}_N^*(z) | \le  L\vartheta_N(z) ,  
\end{equation}
where we used that the  circular density $\sigma(z) = 1/\pi$ if $z\in\D$.

\begin{lemma} \label{lem:04}
It holds as $N\to+\infty$,
\begin{equation*}
\int_\C | \widetilde{u}_N^*(x) | \d^2 x = \O(N \delta) . 
\end{equation*}
\end{lemma}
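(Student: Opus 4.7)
The plan is to split $\int_\C |\widetilde u_N^*(x)|\,\d^2 x$ into an interior piece on $\D_{1-2\delta_N}$, where Proposition~\ref{prop:approx} applies, and a complementary piece, on which I will rely only on the non-negativity of $u_N^*$ and the mass identity \eqref{mass}.

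On the interior, specializing Proposition~\ref{prop:approx} to $w=z$ gives the pointwise bound \eqref{1ptest}: $|\widetilde u_N^*(z)| \le L(\vartheta_N(z)+1)$ for every $z\in\D_{1-2\delta_N}$. Since
\[
\int_\C \vartheta_N(z)\,\d^2 z \,=\, \sum_{k=1}^n \epsilon_k^{-2}\cdot\pi\epsilon_k^2 \,=\, n\pi ,
\]
integration immediately yields $\int_{\D_{1-2\delta_N}} |\widetilde u_N^*(z)|\,\d^2 z = \O(1)$.

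On the complement, I use mass conservation. The interior bound produces
\[
\int_{\D_{1-2\delta_N}} u_N^*(z)\,\d^2 z \,\ge\, N\sigma(\D_{1-2\delta_N}) - \O(1) \,=\, N(1-2\delta_N)^2 - \O(1) \,=\, N - \O(N\delta_N),
\]
so that $\int_{\C\setminus\D_{1-2\delta_N}} u_N^*(z)\,\d^2 z = \O(N\delta_N)$ by \eqref{mass}. Since $\sigma$ is supported on $\D$ and $N\sigma(\D\setminus\D_{1-2\delta_N}) = N\bigl(1-(1-2\delta_N)^2\bigr) = \O(N\delta_N)$, the triangle inequality gives
\[
\int_{\C\setminus\D_{1-2\delta_N}} |\widetilde u_N^*(z)|\,\d^2 z \,\le\, \int_{\C\setminus\D_{1-2\delta_N}} u_N^*(z)\,\d^2 z + N\sigma(\D\setminus\D_{1-2\delta_N}) \,=\, \O(N\delta_N) .
\]
Adding the two contributions proves the claim, because $N\delta_N = \sqrt{N(\log N)^\beta}$ dominates $\O(1)$.

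The only delicate point is that Proposition~\ref{prop:approx} gives no pointwise information in the thin boundary ring $\D\setminus\D_{1-2\delta_N}$ nor in the exterior $\C\setminus\D$. I sidestep any further a priori bound on $u_N^*$ there by using only non-negativity and the exact total mass; this is precisely the step that produces the $\O(N\delta_N)$ scale in the statement, rather than the $\O(1)$ scale enjoyed by the interior.
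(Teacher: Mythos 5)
Your argument is correct and essentially follows the paper's decomposition: interior control on $\D_{1-2\delta_N}$ from the pointwise bound \eqref{1ptest}, then mass conservation \eqref{mass} to control the complement. The one genuine difference is in how the thin ring $\D\setminus\D_{1-2\delta_N}$ is handled: the paper invokes the crude uniform bound $u_N^*(x)\le CN$ from Lemma~\ref{lem:02} to integrate $|\widetilde u_N^*|$ over that ring directly, whereas you avoid Lemma~\ref{lem:02} altogether by using the pointwise triangle inequality $|\widetilde u_N^*|\le u_N^* + N\sigma$ over the whole exterior $\C\setminus\D_{1-2\delta_N}$ and then bounding each summand by an integrated mass estimate of order $N\delta_N$. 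This is a small but real simplification; it buys you one fewer external lemma at no cost, since the resulting bound is of the same order. (As a side note, the paper's line ``$\widetilde u_N^* = u_N^* - \sigma$'' should read $u_N^* - N\sigma$ as densities, which is how both \eqref{1ptest} and your computation implicitly treat it; your use is consistent with the intended meaning.)
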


\begin{proof}
First, let us observe that since $\sigma$ is a probability measure supported on $\overline{\D}$, we have by \eqref{mass}, 
\begin{equation} \label{1pt1}
\int_{\C\setminus\D} | \widetilde{u}_N^*(x) | \d^2 x  = \int_{\C\setminus\D} u_N^*(x)  \d^2 x = N - \int_\D u_N^*(x)  \d^2 x . 
\end{equation}
 Moreover,  by \eqref{1ptest} and using that $\displaystyle\int_\D \vartheta_N(x) \d^2x = (n+1)\pi$, we also have 
\begin{equation} \label{1pt2}
\int_{|x| \le 1-2\delta} \hspace{-.3cm} | \widetilde{u}_N^*(x) | \d^2 x =\O(1) . 
\end{equation}
Since $\widetilde{u}_N^*  = u_N^* - \sigma$ and $\displaystyle \int_{|x| \le 1-2\delta} \hspace{-.3cm}\sigma(\d x) =  (1-2\delta)^2 $, the previous estimate shows that
\begin{equation} \label{1pt3}
 \int_\D u_N^*(x)  \d^2 x \ge \int_{|x| \le 1-2\delta} \hspace{-.3cm} u_N^*(x)  \d^2 x \ge N - \O(N\delta) .
\end{equation} 
Combining \eqref{1pt3} with formula \eqref{1pt1}, we conclude that as $N\to+\infty$, 
\begin{equation} \label{1pt4}
\int_{\C\setminus\D} | \widetilde{u}_N^*(x) | \d^2 x  = \O(N\delta) . 
\end{equation}
Moreover, using the uniform bound from Lemma~\ref{lem:02} below, there exists $C>0$ such that $ | \widetilde{u}_N^*(x) | \le C N$   for all $x\in \C $ which implies that 
\begin{equation} \label{1pt5}
\int_{ 1-2\delta \le |x| \le 1} \hspace{-.3cm} | \widetilde{u}_N^*(x) | \d^2 x =\O(N \delta) . 
\end{equation}
Combining the estimates  \eqref{1pt2},  \eqref{1pt4}  and  \eqref{1pt5}, this completes the proof.  
\end{proof}

\subsection{Technical estimates} \label{sect:est}

We denote the Gaussian density with variance $2/N$ by
$\Phi_N(u) : =  \tfrac{N}{\pi} e^{-N|u|^2}$. 
Since for any $x,z\in\C$, 
\begin{equation}  \label{Q}
NQ^*(z) + NQ^*(x) - N\Re\{z\overline{x}\} + g_N(x) =  \frac N2 |z-x|^2  + \frac{ g_N(x) - g_N(z)}{2} , 
\end{equation}
we deduce from formulae \eqref{kapprox}--\eqref{K0} with $t=1$ that
\begin{equation} \label{K3}
 \begin{aligned}
| K^\#_N(z,x) |^2 & = \tfrac{N}{\pi} \Phi_N(x-z)  e^{g_N(z)-g_N(x) -2 \Re\left\{ \sum_{i=1}^\ell \frac{(z-x)^i}{i!} \partial^i g_N(x) \right\}  } .
\end{aligned}
\end{equation}
We should view the last factor of \eqref{K3} as a correction. Indeed on small scales, i.e.~if $|x-z| \le \delta$, then $ e^{g_N(z)-g_N(x) -2 \Re\left\{ \sum_{i=1}^\ell \frac{(z-x)^i}{i!} \partial^i g_N(x) \right\}  } = 1+\O(\eta)$ where $\eta = \delta/\epsilon$ goes to 0 as $N\to+\infty$. 
 In particular, this implies that for $N$ is sufficiently large, it holds for all $x,z\in\C$ such that $|x-z| \le \delta$, 
\begin{equation} \label{K9}
|K_N^{\#}(x,z)|\le N . 
\end{equation}
Actually, formula \eqref{K3} shows that on microscopic scales,  $| K^\#_N(z,x) |^2$ is well approximated by the Gaussian kernel  $\Phi_N(x-z)$. 
As in \cite[Lemma 3.3]{AHM15}, we use this fact to prove the following Lemma\footnote{Note that our approximations are more precise than in \cite{AHM15}.}. 

\begin{lemma} \label{lem:05}
It holds uniformly for all $x\in\D$, as $N\to+\infty$, 
\[
\int_{|x-z| \le \delta} \hspace{-.3cm}   | K_N^\#(z,x) |^2  \d^2z  = N \sigma(x) + \O\big( \vartheta_N(x) \big)
\]
where $\vartheta_N$ is as in Proposition~\ref{prop:approx}. 
\end{lemma}

\begin{proof}
Throughout this proof, let us fix $x\in\D$. 
Since $g_N$ is a smooth function, by Taylor's Theorem up to order $2\ell$, there exists a matrix $\mathrm{M} \in \R^{\ell \times \ell}$ (with positive entries) such that for all $u \in \D_{\delta}$, 
\[
g_N(x+u)-g_N(x) =  \underset{0<i+j < 2\ell}{\textstyle  \sum_{i,j=1}^{2\ell-1} }  \M_{i,j} u^i \overline{u}^j
\partial^{i} \dbar^j  g_N(x)  + \O\left( \big\{  \| \nabla^{2 \ell }  g_N\|_\infty \delta^{2\ell}\big\} \right) . 
\]
Let $ \mathrm{Y}^1_x (u): = \sum_{i=1}^{\ell-1} \frac{  \M_{i,i} }{4} |u|^{2i} \Delta^i g_N(x) $
and  $ \mathrm{A}^1_x (u): = \hspace{-.3cm} \underset{0<i+j < 2\ell, i\neq j}{\textstyle  \sum_{i,j=1}^{2\ell-1} }   \M_{i,j} u^i \overline{u}^j \partial^{i} \dbar^j  g_N(x) -2 \Re\left\{ {\textstyle \sum_{i=1}^\ell} \tfrac{u^i}{i!} \partial^i g_N(x)  \right\} $ for $u\in\C$.
Recall that by assumptions,  $\| \nabla^k g_N \|_{\infty}  \le C\epsilon^{-k}$ for all integer $k\in[1, 2 \ell]$ and $\eta= \delta/\epsilon$, so that with the previous notation:
\[ \begin{aligned}
g_N(x+u)-g_N(x) -2 \Re\left\{ {\textstyle \sum_{i=1}^\ell} \tfrac{u^i}{i!} \partial^i g_N(x)  \right\}
  =    {\textstyle  \sum_{i,j=1}^{\ell} }  \M_{i,j} u^i \overline{u}^j \
\partial^{i} \dbar^j  g_N(x) +\O(\eta^{2\ell}).
\end{aligned}\] 
Using the condition $\eta^\ell \le N^{-1}$, by \eqref{K3}, the previous expansion shows that for all $z\in \D(x,\delta)$, 
\begin{equation} \label{K2}
| K^\#_N(z,x) |^2  = \tfrac{N}{\pi} \Phi_N(x-z)  e^{\mathrm{A}^1_x (z-x)+ \mathrm{Y}^1_x (z-x)+\O(N^{-2})} . 
\end{equation}
Importantly, note that for $|u|\le \delta$, 
\begin{equation} \label{Aest}
| \mathrm{Y}^1_x(u)|  , | \mathrm{A}^1_x(u) |  =  \O(\eta^2) ,
\end{equation} 
and that  both $\Phi_N$  and $\mathrm{Y}^1_x$ are radial functions, so that it  holds for any $ k \in\N$, 
\begin{equation} \label{est18}
 \int_{|u| \le \delta} \hspace{-.3cm}  \left( \mathrm{A}^1_x(u) \right)^k   \exp\left( \mathrm{Y}^1_x(u) \right) \Phi_N(\d u)  = 0 .
\end{equation}

Hence, using \eqref{K2}--\eqref{est18}, this implies that for any $x\in\D$, 
\[\begin{aligned}
\int_{|x-z| \le \delta} \hspace{-.3cm}   | K_N^\#(z,x) |^2  \d^2z 
&=  \frac{N}{\pi}  \int_{|u| \le \delta} \hspace{-.3cm}   e^{\mathrm{A}^1_x (u)+ \mathrm{Y}^1_x(u)}\Phi_N(\d u)  + \O(N^{-1}) \\
&=  \frac{N}{\pi}  \int_{|u| \le \delta} \hspace{-.3cm}  e^{\mathrm{Y}^1_x (u)} \Phi_N(\d u)  + \O(N^{-1}) , 
\end{aligned}\]
where we used that $\Phi_N$ is a probability measure. 
Moreover, we verify by \eqref{g} and \eqref{psi} that $|\Delta^{k+1} g_N(x)| \le  C \epsilon^{-2k}  \vartheta_N(x)   $ for all integer $k\in[0, \ell]$, so we can bound $e^{\mathrm{Y}^1_x (u)}  =  1+ \O\big(|u|^2 \vartheta_N(x) \big)$ uniformly for all $|u|\le \delta$, 
Since for any integer $j\ge 0$,
\begin{equation} \label{Gmoment}
\int_{|u| \le \delta } |u|^{2j}   \Phi_N(\d u)  = N^{-j} \left( j! + \O(e^{- N\delta^2}) \right) , 
\end{equation}
we conclude that for all $x\in\D$, 
\begin{equation*} 
\int_{|x-z| \le \delta} \hspace{-.3cm}   | K_N^\#(z,x) |^2  \d^2z  =  \frac{N}{\pi}+ \O\big( \vartheta_N(x) \big)
\end{equation*}
with uniform errors. Since $\sigma(x)= 1/\pi$ for $x\in \D$, this completes the proof.
\end{proof}

We can use Proposition~\ref{prop:approx} and Lemma~\ref{lem:05} to estimate a similar integral for the correlation kernel $ K_N^*$. 
This corresponds to the counterpart of \cite[Corollary 3.4]{AHM15}. 

\begin{lemma} \label{lem:06}
It holds for any $x\in \D_{1-2\delta}$,  as $N\to+\infty$
\[
\int_{|x-w| > \delta} \hspace{-.3cm}|K^*_N(z,x) |^2 \d^2z
= \O\left(N \delta^2 \vartheta_N(x) \right) . 
\]

\end{lemma}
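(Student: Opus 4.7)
My plan is to exploit the reproducing identity for the determinantal kernel
$\displaystyle \int_\C |K^*_N(z,x)|^2 \d^2 z = K^*_N(x,x) = u_N^*(x)$
in order to rewrite the tail integral as
\[
\int_{|x-z|>\delta_N} |K^*_N(z,x)|^2 \d^2 z \ =\  u_N^*(x)\ -\ \int_{|x-z|\le \delta_N} |K^*_N(z,x)|^2 \d^2 z .
\]
The strategy is then to show that both terms on the RHS are each within $\O(N\delta_N^2(1+\vartheta_N(x)))$ of the common value $N\sigma(x)=N/\pi$, so that the difference is small.

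For the first term, the bound \eqref{1ptest} gives $u_N^*(x) = N\sigma(x) + \widetilde u_N^*(x)$ with $|\widetilde u_N^*(x)| \le L(1+\vartheta_N(x))$; for $x\in \D_{1-2\delta_N}$ this yields $u_N^*(x) = N/\pi + \O(1+\vartheta_N(x))$. For the second term I would replace $K^*_N$ by the approximate kernel $K^\#_N$ of \eqref{K0} and apply Lemma~\ref{lem:05}, which gives
\[
\int_{|x-z|\le \delta_N} |K^\#_N(z,x)|^2 \d^2 z \ =\ \tfrac{N}{\pi} + {\textstyle \sum_{j=1}^\ell} c_j \Delta^j g_N(x)/N^{j-1} + \O(\delta_N^2).
\]
A routine computation using $\Delta^j g_N(x) = \O(\epsilon^{-2j+2}\vartheta_N(x))$ and $N\epsilon^2 = N^{2\alpha}$ shows that each correction $\Delta^j g_N(x)/N^{j-1}$ is $\O(\vartheta_N(x))$ (decaying for $j\ge 2$), so this near-diagonal integral equals $N/\pi + \O(\vartheta_N(x)+1)$ as well.

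The main task is then to control the substitution error
\[
\bigl|\,\textstyle\int_{|x-z|\le \delta_N}(|K^*_N|^2 - |K^\#_N|^2)\d^2 z\bigr|
\ \le\ \textstyle\int_{|x-z|\le \delta_N} |K^*_N - K^\#_N|\cdot (|K^*_N|+|K^\#_N|)\d^2 z .
\]
Here I would combine the pointwise estimate $|K^*_N(z,x) - K^\#_N(z,x)|\le L(1+\vartheta_N(x))$ from Proposition~\ref{prop:approx} with the uniform bound $|K^\#_N| \le N$ from \eqref{K9} (together with the corresponding bound $|K^*_N| \le N + L(1+\vartheta_N(x))$) on the disk of area $\pi\delta_N^2$. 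This produces the dominant contribution $L(1+\vartheta_N(x))\cdot 2N\cdot\pi\delta_N^2 = \O(N\delta_N^2(1+\vartheta_N(x)))$, plus a quadratic remainder $L^2(1+\vartheta_N(x))^2\delta_N^2$ which is absorbed using $\vartheta_N(x)\le \epsilon^{-2}= N^{1-2\alpha}$ so that $L^2\delta_N^2(1+\vartheta_N(x))^2 \le N\delta_N^2(1+\vartheta_N(x))$ for $N$ large enough.

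Assembling the three contributions, the leading $N/\pi$ terms cancel and all remaining errors are of the stated order, noting finally that $1+\vartheta_N(x) \le N\delta_N^2(1+\vartheta_N(x))$ whenever $N\delta_N^2 = (\log N)^\beta \ge 1$. The main obstacle is the bookkeeping in the substitution step: one must verify that the \emph{quadratic} error $L^2(1+\vartheta_N(x))^2\delta_N^2$ is still controlled by $N\delta_N^2(1+\vartheta_N(x))$, which is precisely where the crude bound $\vartheta_N(x)\le \epsilon^{-2}$ and the choice $\epsilon = N^{-1/2+\alpha}$ are used.
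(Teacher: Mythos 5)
Your proposal is correct and takes essentially the same route as the paper's proof: reproducing identity to reduce to a near-diagonal integral, Lemma~\ref{lem:05} for $K^\#_N$, Proposition~\ref{prop:approx} combined with the pointwise bound $|K^\#_N|\le N$ to control the substitution error (the paper expands $|a|^2-|b|^2$ as $2|b||a-b|+|a-b|^2$ rather than $|a-b|(|a|+|b|)$, but this is the same estimate), and finally the observation that $1+\vartheta_N(x)\le N\delta_N^2(1+\vartheta_N(x))$ since $N\delta_N^2=(\log N)^\beta\ge 1$.
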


\begin{proof}
First of all, let us bound
\[ \begin{aligned}
\left| \int_{|x-w| \le \delta} \hspace{-.3cm}|K^*_N(z,x) |^2 \d^2z
- \int_{|x-w| \le \delta} \hspace{-.3cm}|K^\#_N(z,x) |^2 \d^2z \right| 
\le 2 \int_{|x-z| \le \delta} \hspace{-.3cm}|K^\#_N(z,x) |  \left|  K^*_N(z,x) - K^\#_N(z,x)  \right|\d^2z  \\
 + \int_{|x-z| \le \delta} \hspace{-.1cm}  \left|  K^*_N(z,x) - K^\#_N(z,x)  \right|^2\d^2z  .
\end{aligned}\]
According to Proposition~\ref{prop:approx}, it holds for any $x\in \D_{1-2\delta}$,
\begin{equation}  \label{K4}
\int_{|x-z| \le \delta} \hspace{-.1cm}  \left|  K^*_N(z,x) - K^\#_N(z,x)  \right|^2\d^2z = \O\left( \delta^2\vartheta_N(x)^2 \right) ,
\end{equation}
Similarly, using the estimate \eqref{K9},
\begin{equation}  \label{K5}
\int_{|x-z| \le \delta} \hspace{-.3cm}|K^\#_N(z,x) |  \left|  K^*_N(z,x) - K^\#_N(z,x)  \right|\d^2z \le  \pi L N \delta^2 \vartheta_N(x) . 
\end{equation}
As $\vartheta_N \le (n+1) \epsilon^{-2}  \le N$, this shows that for any $x\in \D_{1-2\delta}$, 
\[
\int_{|x-w| \le \delta} \hspace{-.3cm}|K^*_N(z,x) |^2 \d^2z
= \int_{|x-w| \le \delta} \hspace{-.3cm}|K^\#_N(z,x) |^2 \d^2z +\O\left(N \delta^2 \vartheta_N\right) . 
\]
Using the reproducing property \eqref{mass} and Lemma~\ref{lem:05}, we conclude  that for any $x\in \D_{1-2\delta}$,
\[ \begin{aligned}
\int_{|x-w| > \delta} \hspace{-.3cm}|K^*_N(z,x) |^2 \d^2z
 & =  u^*_N(x) - \int_{|x-w| \le \delta} \hspace{-.3cm}|K^\#_N(z,x) |^2 \d^2z + \O\left(N \delta^2 \vartheta_N\right)  \\
 & = \widetilde{u}^*_N(x) + \O\left(N \delta^2 \vartheta_N\right) .  
\end{aligned}\]
Using the estimate $| \widetilde{u}_N^*(x) | \le  L \vartheta_N(x)$, see   \eqref{1ptest}, this yields the claim. 
\end{proof}

Finally, we need a last Lemma which relies on the \emph{anisotropy} of the \emph{approximate Bergman kernel} $K_N^{\#}$ that we can already see from formula \eqref{K3}. 

\begin{lemma} \label{lem:07}
It holds as $N\to+\infty$, 
\[  \iint_{\substack{ |x| \le 1/2 \\  |z-x| \le \delta}} \hspace{-.1cm} \frac{\dbar g_N(x)- \dbar g_N(z)}{x-z} |K_N^{\#}(x,z)|^2  \d^2z \d^2x  =  \O(1) . 
\]
\end{lemma}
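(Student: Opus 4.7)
The plan is to exploit the explicit Gaussian-plus-correction form of $|K_N^{\#}(x,z)|^2$ in order to isolate a leading piece whose contribution vanishes exactly, so that the whole integral reduces to an error term which can be estimated.

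By Proposition~\ref{prop:approx} together with the derivation from the proof of Lemma~\ref{lem:05}, write
\[
|K_N^{\#}(x,z)|^2 = \tfrac{N}{\pi}\,\Phi_N(u)\,e^{F(x,z)},\qquad u = x-z,\quad \Phi_N(u)=\tfrac{N}{\pi}e^{-N|u|^2},
\]
where $F(x,z) = \tfrac{|u|^2}{4}\Delta g_N(z) + \mathrm{O}(\eta^3)$ is a polynomial in $u,\bar u$ with coefficients being derivatives of $g_N$ at the reference point $z$, and $|F|=\mathrm{O}(\eta^2)$ uniformly on $|u|\le\delta_N$. Changing variables to $(x,u)$ with $z = x+u$, split
\[
I = I_0 + I_{\mathrm{corr}},\qquad I_0 := \iint \frac{\dbar g_N(x) - \dbar g_N(z)}{x-z}\,\tfrac{N}{\pi}\Phi_N(u)\,\d^2u\,\d^2x.
\]

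First show $I_0 = 0$ exactly. The integrand has compact support in $x$ because $\dbar g_N$ is supported inside $\D_{2\epsilon_0}\subset\D_{1/2}$, so Fubini applies. Exchange the order of integration; the inner $x$-integral equals
\[
\int_\C\bigl[\dbar g_N(x) - \dbar g_N(x+u)\bigr]\,\d^2 x = 0
\]
since $\int \dbar g_N\,\d^2x=0$ by Stokes' theorem applied to the compactly supported $g_N$. Hence $I_0 = 0$.

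For $I_{\mathrm{corr}}$, expand $e^F - 1 = F + \tfrac12 F^2 + \cdots$ and use the radial/anisotropic decomposition $F = \mathrm{Y}(u,z) + \mathrm{A}(u,z)$ introduced in the proof of Lemma~\ref{lem:05}, together with the Taylor expansion of the difference quotient $(\dbar g_N(x)-\dbar g_N(z))/(x-z)$ around $z$. After integration in the angular variable of $u$ against the radial $\Phi_N$, only pairings with net angular phase zero survive, and each such pairing is evaluated using the Gaussian moments $\int |u|^{2m}\Phi_N\,\d^2u = m!/N^m$. Every surviving term reduces to an integral of a product of derivatives of $g_N$, which is controlled by $\mathrm{O}(1)$ after integrating by parts, repeatedly if needed, using the compact support of $g_N$ inside $\D_{1/2}$.

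The main obstacle lies in the leading pairing: the constant term $\partial\dbar g_N(z) = \Delta g_N(z)/4$ in the expansion of the difference quotient against the leading radial piece $\tfrac{|u|^2}{4}\Delta g_N(z)$ of $F$ naively produces the apparently diverging contribution $\tfrac{1}{16\pi}\int(\Delta g_N)^2\,\d^2 z = \mathrm{O}(\epsilon^{-2})$. This must be cancelled against the companion contributions generated by Taylor-expanding $\Delta g_N(z) = \Delta g_N(x+u) = \Delta g_N(x) + u\,\partial\Delta g_N(x) + \bar u\,\dbar\Delta g_N(x) + \mathrm{O}(|u|^2)$: after evaluating the ensuing angular integrals, the relevant cross-terms reduce by integration by parts to $\int \partial \Delta g_N \cdot \Delta g_N\,\d^2x = \tfrac12\int\partial(\Delta g_N)^2\,\d^2x = 0$. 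The careful bookkeeping of which radial pairings cancel among themselves — which is the analogue in our inhomogeneous setting of the exact cancellation already used to establish $I_0 = 0$ — is what ultimately produces the $\O(1)$ bound claimed in the statement.
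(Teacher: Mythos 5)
Your reduction $I_0=0$ is correct and is in fact a cleaner packaging of the cancellation the paper itself uses: after integrating the radial part $\mathrm{Y}^2_x(u)$ against $\Phi_N$, the paper ends up with $\tfrac{N}{\pi}\sum_j \tfrac{c'_j}{N^j}\int\Delta^{j+1}g_N\,\d^2x$, and each integral vanishes by the compact support of $g_N$; your translation-invariance argument yields exactly this conclusion without going through the Gaussian moments. The gap is in $I_{\mathrm{corr}}$. You have correctly isolated the dangerous term $\tfrac{1}{16\pi}\int(\Delta g_N)^2\,\d^2z$, which is indeed of size $\asymp\epsilon^{-2}$, but the cancellation you then assert does not occur. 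Re-centering $\Delta g_N(z)=\Delta g_N(x)+u\,\partial\Delta g_N(x)+\bar u\,\dbar\Delta g_N(x)+\O(|u|^2)$ generates, after the angular integral, \emph{additional} cross-terms of the type $\int\partial\Delta g_N\cdot\Delta g_N\,\d^2x$, which do vanish, but it does not remove the quadratic term $\int(\Delta g_N(x))^2\,\d^2x$ itself; that quantity is strictly positive and cannot be made to disappear by any integration by parts. So your bookkeeping, as written, only gives $I_{\mathrm{corr}}=\O(\epsilon^{-2})$, not $\O(1)$.

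By contrast, the paper handles this stage by collapsing the radial part of $\mathrm{Y}^2_x(u)\exp(\mathrm{Y}^1_x(u))$ to the single-Laplacian form $\sum_j c'_j\,\Delta^{j+1}g_N(x)\,|u|^{2j}$ modulo a small error, so that \emph{every} surviving radial term integrates to zero by compact support; the essential point is that no $(\Delta g_N)^2$-type integrand appears in the leading expansion, it is pushed into the error term. If you want to keep your $I_0/I_{\mathrm{corr}}$ split, you must either reproduce that reduction and explain why the products of Laplacians it suppresses are genuinely under control, or settle for the weaker bound $\O(N\eta^2)$ for $I_{\mathrm{corr}}$ — which, by \eqref{K8}, is in fact all that Proposition~\ref{prop:error3} ultimately needs.
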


\begin{proof}
The proof if analogous to that of Lemma~\ref{lem:05}. 
Since $g_N$ is a smooth function, by Taylor Theorem up to order $2\ell\in\N$, it holds  for any $x\in \D_{1/2}$  and  $z\in\D(x,\delta)$,  
\[
  \frac{\dbar g_N(x)- \dbar g_N(z)}{x-z}  =  \underset{0<i+j\le 2\ell}{\textstyle  \sum_{i,j=0}^{2\ell} } \M_{i,j} u^{i-1} \overline{u}^j
\partial^{i} \dbar^{j+1}  g_N(x) \\
 + \O\left( \big\{  \| \nabla^{2(\ell+1)}  g_N\|_\infty \delta^{2\ell}\big\} \right) ,
\]
where $u = (z-x) \neq 0$.
Let $ \mathrm{Y}^2_x (u): = \sum_{j=0}^{\ell-1} \frac{  \M_{j+1,j} }{4} |u|^{2j} \Delta^{j+1} g_N(x) $
and  $ \mathrm{A}^2_x (u): = \hspace{-.3cm} \underset{0<i+j\le 2\ell, i\neq j+1}{\textstyle  \sum_{i,j=0}^{2\ell} }  \hspace{-.3cm} \M_{i,j} u^{i-1} \overline{u}^j \partial^{i+1} \dbar^{j+1}  g_N(x) $ for $u\in\C$. 
Since $\| \nabla^{2(\ell+1)} g_N\|_\infty \delta^{2\ell} \le C \eta^{2\ell} \epsilon^{-2} \le CN^{-1}$  because we choose $\ell\in\N$ in such a way $\eta^\ell \le N^{-1}$ with $\eta = \delta/\epsilon$,  
this shows that uniformly for all $x\in \D_{1/2}$  and  $z\in\D(x,\delta)$,  
\[
\frac{\dbar g_N(x)- \dbar g_N(z)}{x-z}  
= \mathrm{Y}^2_x(x-z) + \mathrm{A}^2_x(x-z)
+ \O(N^{-1}) . 
\]
By Lemma~\ref{lem:05}, we immediately see  that    $\displaystyle \iint_{\substack{ |x| \le 1/2 \\  |z-x| \le \delta}} \hspace{-.1cm}|K_N^{\#}(x,z)|^2  \d^2z \d^2x  = \frac N4+ \O(1)$ and the previous expansion implies that
\[  
\mathfrak{Z}_N : = \iint_{\substack{ |x| \le 1/2 \\  |z-x| \le \delta}} \hspace{-.1cm}\frac{\dbar g(x)- \dbar g(z)}{x-z} |K_N^{\#}(x,z)|^2  \d^2z \d^2x  
=\iint_{\substack{ |x| \le 1/2 \\  |z-x| \le \delta}} \hspace{-.1cm}\left( \mathrm{Y}^2_x(x-z) + \mathrm{A}^2_x(x-z) \right) |K_N^{\#}(x,z)|^2  \d^2z \d^2x  
+\O(1) .
\]
Using formula \eqref{K2}, \eqref{Aest} and the estimates  $|\mathrm{Y}^1_x(u)| , |\mathrm{A}^1_x(u)| = \O(\epsilon^{-2})$ which are uniform for $x,u\in\C$, we obtain  by a change of variable, 
\[  
\mathfrak{Z}_N =  \frac{N}{\pi}  \iint_{\substack{ |x| \le 1/2 \\  |u| \le \delta}}
\left( \mathrm{Y}^2_x(u) + \mathrm{A}^2_x(u) \right)e^{\mathrm{A}^1_x (u)+ \mathrm{Y}^1_x (u)}\Phi_N(u)  \d^2u \d^2x  + \O(\epsilon^{-2}N^{-1}) . 
\]
The error term will be negligible.
If we proceed exactly as in the proof of Lemma~\ref{lem:05}, see \eqref{est18}, then only the radial parts contributes:
\[  \begin{aligned} 
\mathfrak{Z}_N 
= \frac{N}{\pi}  \iint_{\substack{ |x| \le 1/2 \\  |u| \le \delta}} 
 \mathrm{Y}^2_x(u) \exp\left( \mathrm{Y}^1_x(u)\right)  \Phi_N(\d u)  \d^2u \d^2x  + \O(\epsilon^{-2}N^{-1}) . 
\end{aligned}\]
Moreover, using that $|\Delta^{k+1} g_N(x)| \le  C \epsilon^{-2k}  \vartheta_N(x)   $ for all integer $k\in[0, \ell]$,
we can develop for all $|u| \le \delta$,  $ \mathrm{Y}^2_x(u) \exp\left( \mathrm{Y}^1_x(u)\right) = \Delta g_N(x)+ \O(\vartheta_N(x)|u|^2 )$ uniformly for all $x\in\D_{1/2}$ -- here we used again that the parameter $\eta\le 1/4$ to control the error term. Hence, by \eqref{Gmoment}, we conclude that
\[  
\mathfrak{Z}_N =  \frac{N}{\pi}   \int_{|x| \le 1/2} \hspace{-.1cm}\Delta g_N(x) \d^2x 
+ \O\bigg( \int_{|x| \le 1/2} \hspace{-.1cm} \vartheta_N(x) \d^2x   \bigg) + \O(\epsilon^{-2}N^{-1}) .
\]
Since the first integral on the RHS vanishes and the second integral is $O(1)$, this completes the proof.
\end{proof}

\subsection{Error of type $\mathfrak{T}^1_N$} \label{sect:error1}

In Sections~\ref{sect:error1}--\ref{sect:error3}, we use the estimates from Sections~\ref{sect:kernel} and~\ref{sect:est}  to bound the error terms when we apply Proposition~\ref{prop:ward} to the function $g_N = g_N^{\g, \vec{z}}$  given by \eqref{g}. Let us abbreviate
\begin{equation} \label{Sigmag}
\Sigma = \Sigma(g_N) = \sqrt{ \int  \dbar g_N(x)  \partial g_N(x) \sigma(\d x)} . 
\end{equation}

\begin{proposition} \label{prop:error1}
We have 
$\left| \mathfrak{T}^1_N(g_N) \right| = \O\left( \Sigma^2  \epsilon^{-2} \right)$,
uniformly for all $t\in(0,1]$,
as $N\to+\infty$. 
\end{proposition}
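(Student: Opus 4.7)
The starting point is the pointwise one--point estimate $|\widetilde{u}_N^*(x)| \le L(\vartheta_N(x)+1)$ provided by \eqref{1ptest} throughout $\D_{1-2\delta_N}$. By construction \eqref{g}, the test function $g_N = g_N^{\vec\gamma,\vec z}$ is compactly supported inside $\bigcup_k \D(z_k,\epsilon_0) \subset \D_{2\epsilon_0} \subset \D_{1/2}$, and since $\delta_N \to 0$, this support lies well inside $\D_{1-2\delta_N}$ for all $N$ large. Hence the bound on $|\widetilde{u}_N^*|$ is available uniformly throughout the region of integration, and no bulk/boundary splitting is required.

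Writing $\dbar g_N \,\partial g_N = \tfrac14 |\nabla g_N|^2$, the plan is to split
\[
|\mathfrak{T}^1_N(g_N)| \le \tfrac{L}{4}\int |\nabla g_N|^2 (\vartheta_N(x)+1)\,\d^2x + \tfrac{L}{4}\int |\Delta g_N| (\vartheta_N(x)+1)\,\d^2x
\]
and treat the two terms independently. For the first term, I would exploit the identity
\[
\int |\nabla g_N|^2 \,\d^2x = 4\int \dbar g_N \,\partial g_N \,\d^2x = 4\pi\, \Sigma^2,
\]
which follows from $\sigma = \pi^{-1}\1_\D$ together with $\supp(g_N)\subset\D$. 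Combined with the trivial sup--bound $\|\vartheta_N\|_\infty \le n\epsilon^{-2}$, this immediately yields a contribution of exactly the claimed order $\O(\Sigma^2 \epsilon^{-2})$.

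For the second term, I would invoke $\Delta \log|\cdot - y| = 2\pi \delta_y$ to compute explicitly
\[
\Delta g_N = 2\pi \sum_{k=1}^n \gamma_k \bigl( \phi_{\epsilon_k}(\cdot - z_k) - \phi(\cdot - z_k) \bigr),
\]
from which the trivial bound $\int |\Delta g_N|\,\d^2x \le 4\pi \sum_k |\gamma_k|$ gives a uniformly bounded $L^1$--norm; multiplying by $(\vartheta_N+1)$ and using $\|\vartheta_N\|_\infty \le n\epsilon^{-2}$ then yields $\O(\epsilon^{-2})$. This is absorbed into the target $\O(\Sigma^2\epsilon^{-2})$ since in the nontrivial regime $\Sigma^2$ is bounded away from $0$ (and is in fact of logarithmic order $\log\epsilon^{-1}$ in the setting of Proposition~\ref{thm:exp}). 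The argument is essentially direct once \eqref{1ptest} is available; the only technical point to verify is the support inclusion for large $N$, and I do not expect any genuine obstacle.
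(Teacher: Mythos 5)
Your argument follows exactly the paper's own route: split $\mathfrak{T}^1_N$ into the gradient piece and the Laplacian piece, bound $|\widetilde{u}_N^*|$ by $L(\vartheta_N+1)\le Cn\epsilon^{-2}$ on $\supp g_N\subset\D_{1/2}$ via \eqref{1ptest}, then use $\int\dbar g_N\,\partial g_N\,\d^2x=\pi\Sigma^2$ for the first and the explicit expression for $\Delta g_N$ with $\|\Delta g_N\|_{L^1}=\O(1)$ for the second. The one place you are more explicit than the paper is in noting that the $\O(\epsilon^{-2})$ contribution from the Laplacian piece is only absorbed into $\O(\Sigma^2\epsilon^{-2})$ when $\Sigma^2\gtrsim 1$ (which fails for arbitrarily small nonzero $\vec\gamma$ at fixed $N$); the paper silently makes the same reduction, and in both cases the issue is harmless in the application because $\epsilon^{-2}=\O(N\eta)$ is already dominated by the $\mathfrak{T}^3_N$ error estimate when the three bounds are combined in the proof of Proposition~\ref{thm:exp}.
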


\begin{proof}
A trivial consequence of the estimate \eqref{1ptest} is that 
$|\widetilde{u}_N^*(x)| \le C \epsilon^{-2}$ for all $|x| \le 1/2$. 
Since $\supp(g_N) \subseteq \D_{1/2}$, this implies that
\[
\left| \int \Delta g_N(x)\ \widetilde{u}_N^*(x)\d^2x \right| \le C  \epsilon^{-2}  \int |\Delta g_N(x)| \d^2x  = \O( \epsilon^{-2})  , 
\]
where we used that $\Delta g_N(x) = {\textstyle \sum_{k=1}^n} \gamma_k  \left( \phi_{\epsilon_k}(x-z_k) - \phi(x-z_k) \right)$ so that 
$\displaystyle \int |\Delta g_N(x)| \d^2x \le 2  {\textstyle \sum_{k=1}^n} |\gamma_k|$ since $\phi$ is a probability density function on $\C$. Similarly, we have
\[
\left|\int \dbar g_N(x)  \partial g_N(x) \ \widetilde{u}_N^*(x)\d^2x\right| \le  C  \epsilon^{-2}  \int    \dbar g_N(x)  \partial g_N(x) \d^2x = \O\left( \Sigma^2  \epsilon^{-2} \right) , 
\]
since $\dbar g_N= \overline{\partial g_N} $ so that $\dbar g_N(x)  \partial g_N(x) \ge 0$ for all $x\in\C$ and the previous integral is equal to  $\pi \Sigma^2$.
By definition of $ \mathfrak{T}^1_N$ -- see Proposition~\ref{prop:ward} --   this proves the claim.
\end{proof}

\subsection{Error of type $\mathfrak{T}^2_N$} \label{sect:error2}

\begin{proposition} \label{prop:error2}
Recall that $\eta = \delta/\epsilon$. 
It holds as $N\to+\infty$,
$| \mathfrak{T}^2_N(g_N)|  =  \O\left( \Sigma  N \eta \right)$. 
\end{proposition}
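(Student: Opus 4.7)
The plan is to swap the order of integration so as to isolate the Cauchy transform
\[
G(x) := \int_{\C}\frac{\widetilde{u}_N^*(z)}{x-z}\,\d^2 z,
\]
estimate $G$ on the support of $\dbar g_N$, and then apply Cauchy--Schwarz to the remaining integral in $x$. Using the fact that $\supp(\dbar g_N) \subseteq \D_{1/2}$, Fubini gives
\[
\mathfrak{T}^2_N(g_N) = \int_{\D_{1/2}} \dbar g_N(x)\,\widetilde{u}_N^*(x)\,G(x)\,\d^2 x,
\]
so that $|\mathfrak{T}^2_N(g_N)| \le \|G\|_{L^\infty(\D_{1/2})}\cdot \|\dbar g_N\|_{L^2}\cdot \|\widetilde{u}_N^*\|_{L^2(\D_{1/2})}$.

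The key step is bounding $\|G\|_{L^\infty(\D_{1/2})}$, which is the heart of the argument: naively $G(x)$ could blow up because the singularity of the Cauchy kernel $1/(x-z)$ is integrated against a signed density whose $L^1$ norm is as large as $N\delta_N$ (by Lemma~\ref{lem:04}). The resolution is a split of $\widetilde{u}_N^*$ into a bulk part on $\D_{1-2\delta_N}$ and a boundary part on $\C\setminus \D_{1-2\delta_N}$. For the boundary part, every $z$ with $|z|\ge 1-2\delta_N$ satisfies $|x-z|\ge 1/4$ for all $x\in \D_{1/2}$ (and large $N$), hence that contribution is bounded by $4\|\widetilde{u}_N^*\|_{L^1} = \O(N\delta_N)$ thanks to Lemma~\ref{lem:04}. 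For the bulk part, we use the pointwise estimate $|\widetilde{u}_N^*(z)|\le L(\vartheta_N(z)+1)$ from \eqref{1ptest}. Splitting $\vartheta_N = \sum_k \epsilon_k^{-2}\1_{\D(z_k,\epsilon_k)}$ and using the elementary bound $\int_{\D(z_k,\epsilon_k)} \d^2 z/|x-z|\le 2\pi\epsilon_k$, one obtains the bulk bound $\O(\epsilon^{-1})$. Since $N\delta_N = \sqrt{N(\log N)^\beta} \gg N^{1/2-\alpha} = \epsilon^{-1}$ for large $N$, the boundary term dominates and $\|G\|_{L^\infty(\D_{1/2})} = \O(N\delta_N)$.

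It remains to control the two $L^2$ factors, which is routine. First, $\|\dbar g_N\|_{L^2}^2 = \int_{\D_{1/2}}\dbar g_N\,\partial g_N\,\d^2 x = \pi\Sigma^2$ directly from the definition \eqref{Sigmag}. Second, interpolating between the estimates \eqref{1ptest} (which gives $\|\widetilde{u}_N^*\|_{L^\infty(\D_{1/2})} = \O(\epsilon^{-2})$ via $\vartheta_N \le n\epsilon^{-2}$) and \eqref{1pt2} (which gives $\|\widetilde{u}_N^*\|_{L^1(\D_{1/2})} = \O(1)$), we find $\|\widetilde{u}_N^*\|_{L^2(\D_{1/2})}^2 \le \|\widetilde{u}_N^*\|_{L^\infty(\D_{1/2})}\|\widetilde{u}_N^*\|_{L^1(\D_{1/2})} = \O(\epsilon^{-2})$. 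Combining,
\[
|\mathfrak{T}^2_N(g_N)| \;\le\; \O(N\delta_N)\cdot \O(\Sigma)\cdot \O(\epsilon^{-1}) \;=\; \O(\Sigma\, N\delta_N/\epsilon)\;=\;\O(\Sigma\, N\eta),
\]
as claimed. The main obstacle, as indicated above, is really the bound on $\|G\|_{L^\infty(\D_{1/2})}$: both the quantitative separation of scales $d(\supp\dbar g_N,\,\partial \D)\gtrsim 1$ and the precise pointwise control of $\widetilde{u}_N^*$ via $\vartheta_N$ are needed, and a naive Cauchy--Schwarz on the double integral would lose the correct power of $\epsilon$.
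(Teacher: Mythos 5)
Your proof is correct, and the end of your argument already pinpoints the essential difficulty: the $L^1$ norm of $\widetilde u_N^*$ over $\C$ is genuinely as large as $N\delta_N$ (Lemma~\ref{lem:04}), so a naive Cauchy--Schwarz on the double integral is lossy, and one must exploit the fact that both the singularity of $1/(x-z)$ and the support of $\dbar g_N$ sit well inside the bulk where the pointwise bound~\eqref{1ptest} holds.

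You take a cleaner route than the paper. The paper splits the double integral by a fixed cutoff scale $\kappa$ (near--diagonal $|z-x|\le\kappa$ versus far $|z-x|\ge\kappa$), treats the far part via Lemma~\ref{lem:04} together with a Cauchy--Schwarz bound $\int |\dbar g_N\,\widetilde u_N^*| = \O(\Sigma\epsilon^{-1})$ to get $\O(N\Sigma\eta)$, and for the near part $\mathfrak Z_N$ does a somewhat intricate case analysis over the disks $\D(z_k,\epsilon_k)$, arriving at $\O(\Sigma\epsilon^{-2})$, which is subdominant since $\epsilon^{-2}\le N\eta$. You instead isolate the Cauchy transform $G$ of $\widetilde u_N^*$ and split \emph{its} domain by bulk ($\D_{1-2\delta_N}$, where \eqref{1ptest} applies and the singularity contributes only $\O(\epsilon^{-1})$) versus boundary annulus ($\C\setminus\D_{1-2\delta_N}$, where separation $|x-z|\gtrsim 1$ and the $L^1$ estimate give $\O(N\delta_N)$). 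This removes the auxiliary scale $\kappa$ and packages the whole argument into a single uniform estimate $\|G\|_{L^\infty(\D_{1/2})}=\O(N\delta_N)$, after which one application of H\"older plus Cauchy--Schwarz gives the result. The arithmetic is ultimately identical — the dominant $\O(N\delta_N)$ piece comes from the neighborhood of $\partial\D$, multiplied by $\O(\Sigma\epsilon^{-1})$ — and your interpolation $\|\widetilde u_N^*\|_{L^2(\D_{1/2})}^2\le\|\widetilde u_N^*\|_{L^\infty}\|\widetilde u_N^*\|_{L^1}$ is equivalent to the paper's explicit computation of $\int(\vartheta_N+1)^2$. One small quibble: what is directly needed for the boundary piece is $\int_{|z|\ge 1-2\delta_N}|\widetilde u_N^*|=\O(N\delta_N)$, which is \eqref{1pt4} plus \eqref{1pt5} rather than Lemma~\ref{lem:04} itself, though the full $L^1$ bound of course also suffices.
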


\begin{proof}

Fix a small parameter $0< \kappa \le 1/4$  independent of $N\in\N$ and let us split
\begin{equation}  \label{error2}
 \mathfrak{T}^2_N(g_N)= \iint \frac{\dbar g_N(x) }{x-z} \widetilde{u}_N^*(z) \widetilde{u}_N^*(x) \d^2 x\d^2z
= \mathfrak{Z}_N +  \iint_{|z-x| \ge \kappa} \frac{\dbar g_N(x) }{x-z}  \widetilde{u}_N^*(z) \widetilde{u}_N^*(x) \d^2 x\d^2z 
\end{equation}
where 
\begin{equation}  \label{error1}
\mathfrak{Z}_N
: = \iint_{|z-x| \le \kappa} \frac{\dbar g_N(x) }{x-z}  \widetilde{u}_N^*(z) \widetilde{u}_N^*(x)\d^2 x\d^2z . 
\end{equation}
Since $\supp(g_N) \subseteq \D_{1/2}$, by Lemma~\ref{lem:04},  
the second term on the RHS of \eqref{error2} satisfies
\begin{equation}  \label{error5}
\left| \iint_{|z-x| \ge \kappa} \frac{\dbar g_N(x) }{x-z}  \widetilde{u}_N^*(z) \widetilde{u}_N^*(x) \d^2 x\d^2z \right|  = \O\left( N \delta   \int_{|x|\le 1/2} |\dbar g_N(x)  \widetilde{u}_N^*(x) | \d^2x \right) . 
\end{equation}
Moreover, using Cauchy--Schwartz inequality  and \eqref{1ptest}, this implies that
\[
 \int_{|x|\le 1/2}  |\dbar g_N(x)  \widetilde{u}_N^*(x) | \d^2x \le L \sqrt{ \int  |\dbar g_N(x) |^2\d^2x   \int_{|x|\le 1/2} \vartheta_N^2(x) \d^2x  } . 
\]
According to the notation of Proposition~\ref{prop:approx}, we verify ${ \displaystyle \int_{|x|\le 1/2} } \vartheta^2_N(x) \d^2x  \le \tfrac{\pi}{2}+ 2\pi \sum_{j,k=1}^n \epsilon_k^{-2} \epsilon_j^{-2}(\epsilon_k^2+\epsilon_j^{2}) \le C\epsilon^{-2}$, so that by \eqref{Sigmag}, 
\begin{equation}  \label{error6}
 \int_{|x|\le 1/2}  |\dbar g_N(x)  \widetilde{u}_N^*(x) | \d^2x = \O(\Sigma \epsilon^{-1}). 
\end{equation}
The estimates \eqref{error5} and \eqref{error6} show that with $\eta = \delta/\epsilon$, 
\begin{equation}  \label{error4}
\left| \iint_{|z-x| \ge \kappa} \frac{\dbar g_N(x) }{x-z}  \widetilde{u}_N^*(z) \widetilde{u}_N^*(x) \d^2 x\d^2z \right|  =  \O(N \Sigma \eta) . 
\end{equation}

\medskip

Let $\mathscr{S}_N :=\bigcup_{k=1}^n \D(z_k,\epsilon_k)$. 
In order to control the integral \eqref{error1},  we split it into $n+1$ parts and use  \eqref{1ptest} which is valid for all $x\in\supp(g_N)$, then we obtain 
\begin{equation} \label{error7}
 \begin{aligned}
\left| \mathfrak{Z}_N \right|
& \le L  \left( \sum_{k=1}^n \epsilon_k^{-2}  \int_{|x-z_k| \le \epsilon_k}
+ \int_{x \notin \mathscr{S}_N} \right)
\left( \int_{|w| \le \kappa}  |\widetilde{u}_N^*(x+w)|  \frac{\d^2 w}{|w|} \right)  |\dbar g_N(x)|  \d^2x   . 
\end{aligned} 
\end{equation}
On the one hand, it follows from  \eqref{1ptest} that for any $x\in\D(z_k,\epsilon_k)$, 
\[ \begin{aligned}
\int_{|w| \le \kappa}  |\widetilde{u}_N^*(x+w)|  \frac{\d^2 w}{|w|} \
 & \le L \sum_{j=1}^n \epsilon_j^{-2} \int_{w\in \D(z_j-x, \epsilon_j)}  \frac{\d^2 w}{|w|}  + L  \int_{\substack{ |w| \le \kappa \\  (x+w) \notin \mathscr{S}_N}}   \frac{\d^2 w}{|w|} \\
 &\le 
L \sum_{j=1}^n \epsilon_j^{-2} \int_{w\in \D(z_j-z_k, \epsilon_j + \epsilon_k)}  \frac{\d^2 w}{|w|}  + 2\pi \kappa L \\
&\le L \pi \left( 1+ {\textstyle \sum_{j=1}^n} \epsilon_j^{-2} (\epsilon_j + \epsilon_k) \right) . 
\end{aligned}\]
On the other hand, as $|\widetilde{u}_N^*(z)| \le nL \epsilon^{-2}$ for all $|z| \le 3/4$, it also holds for all $x\in \D_{1/2}$, 
\[
\int_{|w| \le \kappa}  |\widetilde{u}_N^*(x+w)|  \frac{\d^2 w}{|w|} =\O(\epsilon^{-2}) . 
\]
Combining these two bounds with \eqref{error7}, we conclude that
\[ \begin{aligned}
\left| \mathfrak{Z}_N \right| 
\le
\pi L^2  \sum_{k, j =1}^n \epsilon_k^{-2} \epsilon_j^{-2}  (\epsilon_j + \epsilon_k) 
  \int_{|x-z_k| \le \epsilon_k} \hspace{-.3cm} |\dbar g_N(x)|  \d^2x 
+\O\left(\epsilon^{-2}  \int |\dbar g_N(x)| \d^2x\right)
\end{aligned}\]
By the Cauchy--Schwartz inequality and \eqref{Sigmag}, this implies that 
\[
\left| \mathfrak{Z}_N \right| 
\le   (\pi L)^2 \Sigma   \sum_{k, j =1}^n \epsilon_k^{-1} \epsilon_j^{-2}  (\epsilon_j + \epsilon_k)   + \O(\epsilon^{-2} \Sigma) . 
\]
Since our parameters $\epsilon_1, \dots , \epsilon_n \ge \epsilon$, we have  $  \sum_{k, j =1}^n \epsilon_k^{-1} \epsilon_j^{-2}  (\epsilon_j + \epsilon_k)  \le 2n^2 \epsilon^{-2}$. Hence, we have proved that 
\begin{equation} \label{error3}
\left| \mathfrak{Z}_N \right|   =  \O(\epsilon^{-2} \Sigma) . 
\end{equation}
Since $\epsilon \ge \delta \ge N^{-1/2}$, by combining the estimates \eqref{error4} and \eqref{error3} with  \eqref{error2}, this completes the proof.
\end{proof}

\subsection{Error of type $\mathfrak{T}^3_N$} \label{sect:error3}

\begin{proposition} \label{prop:error3}
We have
$\left| \mathfrak{T}^3_N(g_N) \right| = \O(N\eta)$
 as $N\to+\infty$.
\end{proposition}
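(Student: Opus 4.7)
The proof follows a near/far split in the spirit of Proposition~\ref{prop:error2}, exploiting the fine structure of the two--point function $|K_N^*(x,z)|^2$. The starting observation is that $|K_N^*(x,z)|^2$ is symmetric in $(x,z)$, so we may symmetrize
\[
\mathfrak{T}^3_N(g_N) = \frac{1}{2}\iint\frac{\dbar g_N(x)-\dbar g_N(z)}{x-z}|K_N^*(x,z)|^2 \d^2x\d^2z,
\]
which removes the apparent singularity at $x=z$. I then split the integral into the near region $\{|x-z|\le\delta_N\}$ and the far region $\{|x-z|>\delta_N\}$.

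For the far region, using the elementary bound $|\dbar g_N(x)-\dbar g_N(z)|/|x-z|\le\delta_N^{-1}(|\dbar g_N(x)|+|\dbar g_N(z)|)$ together with the fact that $\dbar g_N$ is supported in $S\subseteq\D_{2\epsilon_0}\subset\D_{1-2\delta_N}$ for large $N$, the symmetry in $x\leftrightarrow z$ reduces the bound to
\[
C\delta_N^{-1}\int_S|\dbar g_N(x)|\int_{|z-x|>\delta_N}|K_N^*(x,z)|^2\d^2z\d^2x.
\]
Lemma~\ref{lem:06} gives $\int_{|z-x|>\delta_N}|K_N^*|^2\d^2z = O(N\delta_N^2(1+\vartheta_N(x)))$, while the pointwise bound $\|\dbar g_N\|_\infty = O(\epsilon^{-1})$ and $\int(1+\vartheta_N)\d^2x = O(1)$ from \eqref{vartheta} combine to give $\int_S|\dbar g_N|(1+\vartheta_N)\d^2x = O(\epsilon^{-1})$; hence the far contribution is $O(N\delta_N\epsilon^{-1}) = O(N\eta)$.

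For the near region, decompose $|K_N^*|^2 = |K_N^\#|^2 + R$ with $R := |K_N^*|^2 - |K_N^\#|^2$. The $|K_N^\#|^2$--contribution is handled directly by Lemma~\ref{lem:07} (with a routine support argument that restricts $x$ to $\D_{1/2}$) and produces $O(1)$. For the $R$--contribution, I combine $|R|\le 2|K_N^\#||K_N^*-K_N^\#|+|K_N^*-K_N^\#|^2$ with Proposition~\ref{prop:approx} and the Gaussian concentration $\int_{|x-z|\le\delta_N}|K_N^\#|\d^2z = O(1)$ coming from \eqref{K3}, which yields
\[
\int_{|x-z|\le\delta_N}|R(x,z)|\d^2z = O(\epsilon^{-2})\quad\text{uniformly in $x$.}
\]
Coupling this with a careful Lipschitz estimate for the difference quotient that exploits the $L^1$--bound $\int|\nabla\dbar g_N|\d^2x = O(\log N)$---where the logarithmic factor comes from the integrable singularity of $\dbar^2\psi_{\epsilon_k}(\cdot-z_k)$ at $z_k$---the $R$--contribution is $O(\epsilon^{-2}\log N) = o(N\eta)$. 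Summing the three pieces gives $|\mathfrak{T}^3_N(g_N)| = O(N\eta)$.

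The main obstacle lies in the estimate of the $R$ near part. A crude application of the pointwise Lipschitz bound $\|\nabla\dbar g_N\|_\infty = O(\epsilon^{-2})$ together with the bound on $\int|R|$ would yield $O(\epsilon^{-4})$, which is far too large. One must instead exploit the smaller variation of $\dbar g_N$ away from the centers $z_k$, captured by an $L^1$ or maximal--function control of $\nabla\dbar g_N$, and in parallel use the Gaussian decay of $K_N^\#$ at the microscopic scale $N^{-1/2}\ll\delta_N$ to make the bound on $\int|R|\d^2z$ tight.
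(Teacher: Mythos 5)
Your proof follows the same overall skeleton as the paper's: symmetrize $\mathfrak{T}^3_N$ to remove the diagonal singularity, split into near ($|x-z|\le\delta_N$) and far ($|x-z|>\delta_N$) regions, handle the far region by the crude $\delta_N^{-1}$ bound together with Lemma~\ref{lem:06}, and in the near region decompose $|K_N^*|^2 = |K_N^\#|^2 + R$ with Lemma~\ref{lem:07} absorbing the $|K_N^\#|^2$ piece. Where you diverge is in the bookkeeping for the remainder $R$ in the near region. The paper bounds the difference quotient uniformly by $\|\nabla^2 g_N\|_\infty=\O(\epsilon^{-2})$ and then uses the $\vartheta_N$--weighted $L^1$ estimates \eqref{K4}--\eqref{K5} (which come from Proposition~\ref{prop:approx} alone, without invoking Gaussian decay of $K_N^\#$) to get $\iint|R|\,\d^2z\,\d^2x=\O(N\delta_N^2)$, hence $\O(N\eta^2)=o(N\eta)$. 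You instead use the Gaussian decay $\int_{|z-x|\le\delta_N}|K_N^\#|\,\d^2z=\O(1)$ to obtain the sharper pointwise bound $\int_{|z-x|\le\delta_N}|R|\,\d^2z=\O(\epsilon^{-2})$, and compensate by controlling the difference quotient in $L^1$ via $\int|\nabla\dbar g_N|\,\d^2x=\O(\log N)$, yielding $\O(\epsilon^{-2}\log N)=o(N\eta)$. Both routes work and give bounds of comparable strength. Two remarks on your version: (i) the $L^1$ bound on $\nabla\dbar g_N$ is correct — it follows from $|\nabla\dbar g_N(x)|\lesssim\sum_k|\gamma_k|\min(\epsilon_k^{-2},|x-z_k|^{-2})+\O(1)$, which is integrable with a logarithmic divergence — but it is not stated in the paper and needs a short verification; and (ii) the "careful Lipschitz estimate" step implicitly requires that the maximal function $\sup_{\D(x,\delta_N)}|\nabla\dbar g_N|$ still has $L^1$ norm $\O(\log N)$, which holds here precisely because $\delta_N\ll\epsilon_k$ so the sup over a ball of radius $\delta_N$ does not change the profile of $\nabla\dbar g_N$; this should be spelled out. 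The paper's accounting is slightly more economical because it relies only on estimates already established for other purposes, but your variant is sound.
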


\begin{proof}
First, let us observe that by Lemma~\ref{lem:06}, 
\[ \begin{aligned}
\left| \iint_{|z-x| \ge \delta} \frac{\dbar g_N(x) }{x-z}  | K_N^*(x,z) |^2 \d^2 x\d^2z
\right|  &\le \delta^{-1} \int |\dbar g_N(x)| \left( \int_{|z-x| > \delta}  | K_N^*(x,z) |^2\d^2z \right) \d^2 x \\
&\le C N \delta  \int |\dbar g_N(x)|  \vartheta_N(x) \d^2 x . 
\end{aligned}\]
Since $\| \nabla g_N\|_\infty = \O(\epsilon^{-1})$ and 
$\displaystyle  \int_{|x|\le 1/2} \hspace{-.3cm} \vartheta_N(x) \d^2 x  \le (n+1)\pi$, this shows that 
\begin{equation} \label{K7}
\left| \iint_{|z-x| \ge \delta} \frac{\dbar g_N(x) }{x-z}  | K_N^*(x,z) |^2 \d^2 x\d^2z
\right|   = \O(N\eta) . 
\end{equation}

\smallskip

Second, since $\displaystyle \left| \frac{\dbar g_N(x)- \dbar g_N(z)}{x-z}  \right| \le \| \nabla^2 g_N\|_\infty  = \O(\epsilon^{-2})$ for all $x,z\in\C$, we have 
\[ \begin{aligned}
&\left| \iint_{\substack{ |x| \le 1/2 \\  |z-x| \le \delta}} \hspace{-.1cm} \frac{\dbar g_N(x)- \dbar g_N(z)}{x-z} |K_N^{*}(x,z)|^2  \d^2z \d^2x -\iint_{\substack{ |x| \le 1/2 \\  |z-x| \le \delta}} \hspace{-.1cm} \frac{\dbar g_N(x)- \dbar g_N(z)}{x-z} |K_N^{\#}(x,z)|^2  \d^2z \d^2x \right|  \\
 &\qquad\le 2 \epsilon^{-2} \left( \iint_{\substack{ |x| \le 1/2 \\  |z-x| \le \delta}} \hspace{-.1cm}  |K^\#_N(z,x) |  \left|  K^*_N(z,x) - K^\#_N(z,x)  \right|\d^2z \d^2x 
+  \iint_{\substack{ |x| \le 1/2 \\  |z-x| \le \delta}} \hspace{-.1cm}   \left|  K^*_N(z,x) - K^\#_N(z,x)  \right|^2\d^2z \d^2x \right) . 
\end{aligned}\]
If we integrate the estimate \eqref{K4}, respectively \eqref{K5}, over the set $|x| \le 1/2$, we obtain
\[
  \iint_{\substack{ |x| \le 1/2 \\  |z-x| \le \delta}} \hspace{-.1cm}   \left|  K^*_N(z,x) - K^\#_N(z,x)  \right|^2\d^2z \d^2x   = \O(\eta^2) ,
\]
and 
\[
 \iint_{\substack{ |x| \le 1/2 \\  |z-x| \le \delta}} \hspace{-.1cm}  |K^\#_N(z,x) |  \left|  K^*_N(z,x) - K^\#_N(z,x)  \right|\d^2z   = \O(N \delta^2). 
\]
Here we used again that $\displaystyle  \int_{|x|\le 1/2} \hspace{-.3cm} \vartheta_N(x) \d^2 x  \le (n+1)\pi$.
These bounds imply that 
\begin{equation} \label{K6}
\left| \iint_{\substack{ |x| \le 1/2 \\  |z-x| \le \delta}} \hspace{-.1cm} \frac{\dbar g_N(x)- \dbar g_N(z)}{x-z} |K_N^{*}(x,z)|^2  \d^2z \d^2x -\iint_{\substack{ |x| \le 1/2 \\  |z-x| \le \delta}} \hspace{-.1cm} \frac{\dbar g_N(x)- \dbar g_N(z)}{x-z} |K_N^{\#}(x,z)|^2  \d^2z \d^2x \right|   = \O(N \eta^2) . 
\end{equation}
By symmetry, since $\supp(g_N) \subseteq \D_{1/2}$, 
\[
\left| \iint_{|z-x| \le \delta} \frac{\dbar g_N(x) }{x-z}  | K_N^*(x,z) |^2 \d^2 x\d^2z
\right|  \le  \left| \iint_{\substack{ |x| \le 1/2 \\  |z-x| \le \delta}} \hspace{-.1cm} \frac{\dbar g_N(x)- \dbar g_N(z)}{x-z} |K_N^{*}(x,z)|^2  \d^2z \d^2x \right| .
\]
Then,  using the estimate \eqref{K6} and Lemma~\ref{lem:07}, we obtain 
\begin{equation} \label{K8}
\left| \iint_{|z-x| \le \delta} \frac{\dbar g_N(x) }{x-z}  | K_N^*(x,z) |^2 \d^2 x\d^2z \right| 
= \O(N \eta^2) .
\end{equation}
Finally, it remains to combine the estimates \eqref{K7} and \eqref{K8} to complete the proof. 
\end{proof}

\subsection{Proof of Proposition~\ref{thm:exp}} \label{sect:proof}

We are now ready to give the proof of  Proposition~\ref{thm:exp}. Recall that we use the notation of Section~\ref{sect:notation}. When we combine Propositions~\ref{prop:error1}, \ref{prop:error2} and \ref{prop:error3}, we obtain that as $N\to+\infty$,
\[
\left|\mathfrak{T}^1_N(g_N) + \mathfrak{T}^2_N(g_N) - \mathfrak{T}^3_N(g_N)\right|
 = \O\left(N \eta\Sigma(g_N) (1+\eta \Sigma(g_N)) \right) ,
\]
where, by Remark~\ref{rk:uniformity}, the error term is uniform for all $\vec{z} \in \D_{\epsilon_0}^{\times n}$ , all $t\in(0,1]$ and all $\vec{\gamma} \in [-R,R]^{n}$ for a fixed $R>0$.
Since $\Sigma^2(g_N) = \O(\log N)$ according to the asymptotics \eqref{cov1} and $\eta = \delta/\epsilon = (\log N)^{\beta/2} N^{-\alpha}$, this implies that as $N\to+\infty$
\begin{equation} \label{error8}
\frac{1}{N}\left|\mathfrak{T}^1_N(g_N) + \mathfrak{T}^2_N(g_N) - \mathfrak{T}^3_N(g_N)\right|
 = \O\left( (\log N)^{\frac{\beta+1}{2}} N^{-\alpha}\right).
\end{equation}

\medskip

The main idea of the proof, which originates from \cite{Johansson98} is to observe that for any $t>0$, 
\[
\frac{d}{dt}\log \E_N\left[\exp\left( t \X(g_N) \right)\right] 
= \E_N^*\left[ \X(g_N) \right] . 
\]
Hence, by Proposition~\ref{prop:ward} applied to the function $g_N = g_N^{\g, \vec{z}}$, using the estimate \eqref{error8}, we conclude that
\begin{equation} \label{error9}
\frac{d}{dt} \log\E_N\left[\exp\left( t\X(g_N^{\g, \vec{z}}) \right)\right] 
= t \Sigma^2(g_N^{\g, \vec{z}})+  \O\left( (\log N)^{\frac{\beta+1}{2}} N^{-\alpha}\right) ,
\end{equation}
where the error term is uniform for all $t\in[0,1]$, all $\vec{\gamma}$ in compact subsets of $\R^n$ and all $\vec{z} \in \D_{\epsilon_0}^{\times n}$. Then, if we integrate the asymptotics \eqref{error9} for $t\in [0,1]$,  we 
obtain \eqref{exp}.

\section{Kernel asymptotics} \label{sect:approx}

In this section, we obtain the asymptotics for the correlation kernel induced by the  biased measure \eqref{Pbias} that we need in Section~\ref{sect:clt} in order to control the error term in Ward's equation. 
Let us introduce
\begin{equation} \label{normQ}
\| f\|_Q^2 = \int_\C |f(x)|^2 e^{-2N Q(x)} \d^2x , 
\end{equation}
and similarly for the norm $\|\cdot\|_{Q^*}$. 
Recall that $Q(x) = |x|^2/2$ is the Ginibre potential and $Q^* = Q- \frac{g_N}{2N}$ is a potential which is perturbed by the function $g_N = g_N^{\g, \vec{z}} \in \Co_c^\infty(\D_{1/2})$  given by \eqref{g} with $\vec{z} \in \D_{\epsilon_0}^{\times n}$ and $\gamma\in[-R,R]^n$ for some fixed $n\in \N$ and $R>0$.
We rely on the Conventions~\ref{conv} and we choose 
$N_0 \in \N$ sufficiently large so that $\eta\le 1/4$ and  $\| \Delta g_N\|_\infty \le N$ for all $N\ge N_0$. 


\subsection{Uniform estimates for the  1--point function}

In this section, we collect some simple estimates on the 1--point function $u_N^*$ which we need. We skip the details since the argument is the same as in \cite[Section~3]{AHM10} only adapted to our situation.

\begin{lemma} \label{lem:01}
There exists a universal constant $C>0$ such that if $N\ge N_0$,  for any function $f$ which is analytic in $\D(z; 2/\sqrt{N})$ for some $z\in\C$, 
\[
|f(z)|^2 e^{-2N Q^*(z)} \le CN \|f\|^2_{Q^*} . 
 \]
\end{lemma}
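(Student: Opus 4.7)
The plan is to establish the classical Bergman-type sub-mean-value inequality associated with the Ginibre weight, and then absorb the small mesoscopic perturbation $g_N$. The crucial observation is that, for $f$ analytic on $\D(z, 2/\sqrt N)$, the function $w \mapsto h(w) := f(w)\, e^{-N w \bar z}$ is also holomorphic on that disk, so $|h|^2$ is subharmonic. Applying the area sub-mean-value inequality on the disk of radius $r = 2/\sqrt N$ centred at $z$ yields
\[
|f(z)|^2 e^{-2N|z|^2} = |h(z)|^2 \le \frac{1}{\pi r^2}\int_{\D(z,r)} |f(w)|^2 e^{-2N\Re(w\bar z)}\, \d^2w.
\]
Multiplying both sides by $e^{N|z|^2}$ and using the identity $|w-z|^2 = |w|^2 - 2\Re(w\bar z) + |z|^2$ rearranges this to
\[
|f(z)|^2 e^{-N|z|^2} \le \frac{1}{\pi r^2} \int_{\D(z,r)} |f(w)|^2 e^{-N|w|^2}\, e^{N|w-z|^2}\, \d^2w \le \frac{e^4 N}{4\pi} \int_{\D(z,r)} |f(w)|^2 e^{-N|w|^2}\, \d^2w,
\]
since $N r^2 = 4$. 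This is the standard bound for the unperturbed Ginibre weight $Q(w)=|w|^2/2$.

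Next I would transfer this estimate from $Q$ to the perturbed potential $Q^* = Q - g_N/(2N)$. Using $2NQ^*(w) = N|w|^2 - g_N(w)$, the inequality above rewrites as
\[
|f(z)|^2 e^{-2NQ^*(z)} \le \frac{e^4 N}{4\pi} \int_{\D(z,r)} |f(w)|^2 e^{-2NQ^*(w)}\, e^{g_N(z) - g_N(w)}\, \d^2w.
\]
The final step is to verify that the factor $e^{g_N(z)-g_N(w)}$ is bounded uniformly in $z\in\C$ and $w\in\D(z,r)$. Since $g_N = g_N^{\vec\gamma, \vec z}$ is built from the mollifier $\phi$ at the mesoscopic scale $\epsilon = N^{-1/2+\alpha}$, one has $\|\nabla g_N\|_\infty = \O(\epsilon^{-1})$, so by the mean value theorem $|g_N(z) - g_N(w)| \le \|\nabla g_N\|_\infty\, r = \O(N^{-\alpha})$, which is $\le 1$ for all $N \ge N_0$. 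Hence $e^{g_N(z)-g_N(w)} \le e$, and the bound follows with, e.g., $C = e^5/(4\pi)$.

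I do not anticipate any serious obstacle here: this is essentially the classical sub-mean-value estimate for weighted Bergman spaces, via the $h(w) = f(w)e^{-Nw\bar z}$ trick that removes the $|z|$-dependent terms. The only input genuinely specific to the paper is the last step, namely checking that the regularization scale $\epsilon = N^{-1/2+\alpha}$ with $\alpha > 0$ is large enough for $g_N$ to be almost constant on disks of radius $1/\sqrt N$. The estimate would fail at the microscopic scale $\epsilon \sim 1/\sqrt N$, which is consistent with the fact that the whole analysis in Section~\ref{sect:clt} is carried out strictly above this scale.
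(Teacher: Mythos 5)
Your proof is correct and gives the stated bound with an explicit constant $C=e^5/(4\pi)$. Structurally it differs from the paper's: the paper simply invokes \cite[Lemma~3.1]{AHM10}, a sub-mean-value inequality valid for any weight with uniformly bounded Laplacian, and checks the hypothesis $\Delta Q^*\le 3$ (which holds because $N_0$ was chosen so that $\|\Delta g_N\|_\infty\le N$). You instead rederive the weighted sub-mean-value estimate by hand for the pure Ginibre weight $Q(w)=|w|^2/2$, via the holomorphic auxiliary function $h(w)=f(w)e^{-Nw\overline{z}}$, and then transport it to $Q^*=Q-g_N/(2N)$ by bounding the extra factor $e^{g_N(z)-g_N(w)}\le e$ using the gradient estimate $\|\nabla g_N\|_\infty=\O(\epsilon^{-1})$. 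Both rest on the same Bergman-space idea; your version is self-contained and makes explicit exactly which regularity of $g_N$ is being used, while the paper's is shorter by outsourcing the general estimate. One small inaccuracy in your closing aside: nothing actually breaks at the microscopic scale $\epsilon\sim c/\sqrt{N}$ --- there $\|\nabla g_N\|_\infty\,(2/\sqrt N)=\O(1)$, so $e^{g_N(z)-g_N(w)}$ is still bounded by a constant depending only on $\phi$, $R$ and $n$, which is perfectly admissible --- the restriction to $\alpha>0$ is needed elsewhere in Section~\ref{sect:clt}, not by this lemma.
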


\begin{proof}
If $N\ge N_0$, we have $\Delta Q^* \le3$ and by \cite[Lemma~3.1]{AHM10}, we obtain 
\[
|f(z)|^2 e^{-2N Q^*(z)} \le  N \int_{|z-x| \le N^{-1/2}} |f(x)|^2 e^{-2 N Q^*(x)} e^{3N|x-z|^2} \sigma(\d x) .
\]
This immediately yields the claim. 
\end{proof}

\begin{lemma} \label{lem:02}
With the same $C>0$ as in Lemma~$\ref{lem:01}$,  it holds  for all  $N\ge N_0$ and all $z\in\C$,
\[
u_N^*(z)  \le C N . 
\]
\end{lemma}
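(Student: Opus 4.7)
The plan is to use the reproducing kernel property of $k_N^*$ and reduce to Lemma~\ref{lem:01}. Recall that $(p_0^*,\dots,p_{N-1}^*)$ is an orthonormal basis of the space $\mathscr{P}_N$ of analytic polynomials of degree $<N$, equipped with the inner product of $L^2(e^{-2NQ^*})$. Consequently, for every $z\in\C$, the kernel $k_N^*(\cdot,z)$ is the reproducing kernel for point evaluation at $z$ on $\mathscr{P}_N$ with respect to the weighted norm $\|\cdot\|_{Q^*}$.

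First, I would record the variational characterization
\[
k_N^*(z,z) = \sup_{p\in\mathscr{P}_N\setminus\{0\}} \frac{|p(z)|^2}{\|p\|_{Q^*}^2},
\]
which follows directly from the Cauchy--Schwarz inequality applied to the expansion $p(z)=\sum_{k<N}\langle p,p_k^*\rangle_{Q^*}\overline{p_k^*(z)}$, with equality when $p = k_N^*(\cdot,z)$. Multiplying through by $e^{-2NQ^*(z)}$ gives
\[
u_N^*(z) = K_N^*(z,z) = \sup_{p\in\mathscr{P}_N\setminus\{0\}} \frac{|p(z)|^2\, e^{-2NQ^*(z)}}{\|p\|_{Q^*}^2}.
\]

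Next, since every $p\in\mathscr{P}_N$ is entire, it is in particular analytic on the disk $\D(z;2/\sqrt{N})$, so Lemma~\ref{lem:01} applies and yields $|p(z)|^2 e^{-2NQ^*(z)} \le CN\|p\|_{Q^*}^2$ for every nonzero $p\in\mathscr{P}_N$ as soon as $N\ge N_0$. Taking the supremum over $p$ in the identity above immediately produces $u_N^*(z)\le CN$, uniformly in $z\in\C$, with the same constant $C$ as in Lemma~\ref{lem:01}.

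There is essentially no obstacle here: the only ingredient beyond Lemma~\ref{lem:01} is the reproducing kernel identity, which is a standard consequence of the orthonormality of $(p_k^*)_{k<N}$. The proof is a one-line argument once the variational formula is in place, and the uniformity in $z$ and in the perturbation parameters $(\vec{\gamma},\vec{z},t)$ is inherited directly from Lemma~\ref{lem:01} (whose hypothesis $\Delta Q^*\le 3$ is ensured by the choice $N\ge N_0$ together with $\|\Delta g_N\|_\infty\le N$).
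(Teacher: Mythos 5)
Your proof is correct and is essentially the same argument as the paper's: both reduce the bound on $u_N^*(z)$ to Lemma~\ref{lem:01} via the reproducing kernel structure of $k_N^*(\cdot,z)$. The paper applies Lemma~\ref{lem:01} directly to $f = k_N^*(\cdot,z)$ (using $\|k_N^*(\cdot,z)\|_{Q^*}^2 = k_N^*(z,z)$ and then taking $w=z$), while you phrase the same idea through the extremal characterization $k_N^*(z,z)=\sup_{p}\,|p(z)|^2/\|p\|_{Q^*}^2$ — an equivalent packaging of the reproducing property. (Minor slip: in the expansion $p(z)=\sum_{k<N}\langle p,p_k^*\rangle_{Q^*}\,p_k^*(z)$ there should be no conjugation on $p_k^*(z)$, but this has no effect on the Cauchy--Schwarz estimate.)
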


\begin{proof}
Fix $z\in\C$ and let us apply Lemma~\ref{lem:01} to the polynomial $k^*_N(\cdot,z)$, we obtain
\[
|k^*_N(w,z)|^2 e^{-2 N Q^*(w)}  \le  C N k^*_N(z,z) ,
\]
since $\| k^*_N(\cdot,z)\|_{Q^*}^2 = k^*_N(z,z)$ because of the reproducing property of the kernel $k^*_N$. Taking $w=z$ in the previous bound and using that 
$u_N^*(z) =k^*_N(z,z) e^{-2N Q^*(z)}$, we obtain the claim. 
\end{proof}

\subsection{Preliminary Lemmas}

Recall that we let $\Upsilon_N^w(u) =  \sum_{i=0}^{\ell} \frac{u^i}{i!} \partial^i g_N(w) $ and that we defined the \emph{approximate Bergman kernel}  $k^\#$ by 
\begin{equation*}
k^\#_N(x,w) = \frac{N}{\pi} e^{N x \overline{w}}  e^{- \Upsilon_N^w(x-w) } ,
\qquad x,w\in\C. 
\end{equation*}
We note that this kernel is not Hermitian but it is analytic in $x\in\C$  and we define the corresponding operator:
\begin{equation} \label{Kapprox} 
K^\#_N[f] (w) = \int_\C  \overline{k^\#_N(x,w)} f(x) e^{-2N Q^*(x)} \d^2x  , \qquad w\in \C, 
\end{equation}
for any $f\in L^2(e^{-2N Q^*})$. 
According to \eqref{kernel1}, we make a similar definition for $K^*_N[f] $. 
Our next Lemma is the counterpart of \cite[Lemma A.2]{AHM15} and it relies on the analytic properties of the function $\Upsilon_N^w$. Since the test function $g_N$ develops logarithmic singularities for large $N$, we need to adapt the proof accordingly. 

\begin{assumption}  \label{ass:chi}
Let $\chi \in \Co^\infty_c\big(\D_{2\delta}\big)$  be a radial function such that  $0\le \chi\le 1$, $\chi =1$ on $\D_{\delta}$, and $\| \nabla \chi\|_\infty \le C \delta^{-1}$ for a $C>0$ independent of $N\in\N$.  
In the following for any $z\in\C$, we let $\chi_z = \chi(\cdot-z)$.
\end{assumption}
 
\begin{lemma} \label{lem:1}
There exists a constant $C>0$ (which depends only on $R>0$, the mollifier $\phi$ and $n, \ell\in\N$) 
such that for any $z\in\C$  and any function $f \in  L^2(e^{-2N Q^*})$ which is analytic in $\D(z, 2 \delta)$, 
\[
\left| f(z) -K^\#_N[\chi_z f] (z) \right| \le C \big(  N^{-1/2} \vartheta_N(z) + e^{- N \delta^2/2}  \big)  e^{NQ^*(z)}  \| f\|_{Q^*} ,
\]
where $\vartheta_N$ is as in Proposition~\ref{prop:approx}.  
\end{lemma}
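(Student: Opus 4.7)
The strategy is an integration-by-parts argument following the $\dbar$-approach of Hörmander, Berman, and Ameur-Hedenmalm-Makarov. Set $A(x) := \overline{k_N^\#(x,z)}\,e^{-2NQ^*(x)}$, so that $K_N^\#[\chi_z f](z) = \int A(x)\,\chi_z(x) f(x)\,d^2x$. A direct computation using $\dbar_x[N\bar x z - N|x|^2] = -N(x-z)$ gives
\[
\dbar_x A(x) \;=\; A(x)\,\bigl[-N(x-z) + E(x,z)\bigr], \qquad E(x,z) := \dbar g_N(x) - \dbar_x\,\overline{\Upsilon_N^z(x-z)},
\]
where $E(z,z) = 0$ and $\dbar_x E(x,z)\big|_{x=z} = 0$ by the very construction of $\Upsilon_N^z$. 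Since $A(z) = N/\pi$ and $\dbar\,(x-z)^{-1} = \pi\,\delta_z$ distributionally, the function $v(x) := A(x)/[-N(x-z)]$ solves
\[
\dbar_x v(x) \;=\; A(x) \;-\; \delta_z(x) \;-\; \frac{A(x)\,E(x,z)}{N(x-z)}.
\]
Pairing this with $h(x) := \chi_z(x) f(x)$ and applying Stokes, and using that $\dbar h = f\,\dbar\chi_z$ is supported on the annulus $\delta_N \le |x-z|\le 2\delta_N$ (because $f$ is holomorphic on $\D(z,2\delta_N)\supseteq\supp\chi_z$), produces the exact identity
\[
K_N^\#[\chi_z f](z) - f(z) \;=\; \int \frac{A(x)\,E(x,z)}{N(x-z)}\,\chi_z(x) f(x)\,d^2x \;-\; \int v(x)\, f(x)\,\dbar\chi_z(x)\,d^2x.
\]

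Next, I would bound both right-hand terms by Cauchy-Schwarz against $\|f\|_{Q^*}$, using the pointwise estimate
\[
|A(x)|^2\, e^{2NQ^*(x)} \;\le\; C\, N\, \Phi_N(x-z)\, e^{2NQ^*(z)} \qquad (x\in\D(z,2\delta_N),\ N\ge N_0),
\]
which is essentially a restatement of $|K_N^\#(x,z)|^2 = \tfrac{N}{\pi}\Phi_N(x-z)\bigl(1+O(\eta)\bigr)$ from \eqref{K3}. The boundary term is harmless: on the annulus $|x-z|\ge\delta_N$ one has $|v|\le |A|/(N\delta_N)$, and $\Phi_N(x-z)\le \tfrac{N}{\pi}e^{-(\log N)^\beta}$ gives super-polynomial decay, yielding a contribution of size $o(N^{-M})\,e^{NQ^*(z)}\|f\|_{Q^*}$ for every $M>0$. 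For the volume term, the mean value theorem combined with $E(z,z)=0$ and the uniform bound $|\nabla E| \le C\|\nabla^2 g_N\|_{L^\infty(\D(z,2\delta_N))}$ (which uses that the subtracted polynomial $\dbar_x\overline{\Upsilon_N^z(x-z)}$ kills the leading antiholomorphic Taylor coefficient of $\dbar g_N$) yields
\[
\left|\frac{E(x,z)}{x-z}\right| \;\le\; C\,\|\nabla^2 g_N\|_{L^\infty(\D(z,2\delta_N))}
\]
uniformly on $\supp\chi_z$. Combining with the Gaussian bound on $A$ and $\int \Phi_N\,d^2x \le 1$ gives the volume term bounded by $\tfrac{C}{N}\,\|\nabla^2 g_N\|_{L^\infty(\D(z,2\delta_N))}\,e^{NQ^*(z)}\|f\|_{Q^*}$, comfortably within the claimed rate.

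The main technical obstacle will be the structural bound
\[
\|\nabla^2 g_N\|_{L^\infty(\D(z,2\delta_N))} \;\le\; C\,(\vartheta_N(z) + 1),
\]
which is exactly what forces the factor $\vartheta_N(z)+1$ to appear in the statement. This calls for a case analysis keyed to the mesoscopic scales: when $\D(z,2\delta_N)$ meets $\D(z_k,\epsilon_k)$ the local bound $\sim \epsilon_k^{-2}$ is captured precisely by $\vartheta_N(z)$, whereas the cancellation $\psi_{\epsilon_k}-\psi$ in the definition \eqref{g} of $g_N$ removes the long-range logarithmic tails, leaving only a uniformly bounded contribution when $z$ is far from all the singular balls. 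Once this structural estimate is in place, assembling the above ingredients yields the claimed bound.
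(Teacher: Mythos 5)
Your setup (pair $k_N^\#(\cdot,z)$ with $\chi_z f$, use Cauchy's formula / $\dbar$-integration by parts, split into a volume term and a $\dbar\chi_z$ boundary term on the annulus $\delta_N\le|x-z|\le 2\delta_N$, and kill the boundary term by the Gaussian decay of $\Phi_N$) is exactly the paper's strategy, and the boundary-term estimate is correct. The gap is in the volume term.

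Your argument hinges on the structural bound
\[
\|\nabla^2 g_N\|_{L^\infty(\D(z,2\delta_N))}\ \le\ C\,(\vartheta_N(z)+1),
\]
but this estimate is \emph{false}. The point is that $\nabla^2 g_N$ includes the pure second derivatives $\partial^2 g_N$ and $\dbar^2 g_N$, and these are \emph{not} locally supported: writing (for simplicity) $n=1$, $z_1=0$, $\gamma_1=1$, one has $g_N(x)=\log|x|-\psi(x)$ for $\epsilon\le|x|\le1$, hence
\[
\dbar^2 g_N(x)\ =\ -\tfrac{1}{2\,\bar x^{\,2}}\ -\ \dbar^2\psi(x),\qquad \epsilon\le|x|\le1,
\]
with $\dbar^2\psi = O(1)$. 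Near the inner boundary $|x|\approx\epsilon$ this is $\sim\epsilon^{-2}$, while $\vartheta_N(x)=\epsilon^{-2}\1_{|x|<\epsilon}$ vanishes there. So for $z$ just outside $\D_\epsilon$, your right-hand side is $O(1)$ but the left-hand side is of order $\epsilon^{-2}$. The subtraction $\psi_\epsilon-\psi$ only truncates the logarithmic tail at $|x|\ge1$; it does nothing to the polynomial decay $|x|^{-2}$ of the pure second derivatives in the intermediate annulus.

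What the paper does instead (and what repairs the argument) is a full Taylor expansion of $E(x,z)=\dbar g_N(x)-\dbar_x\overline{\Upsilon_N^z(x-z)}$ around $x=z$ to order $\ell$. By construction of $\Upsilon_N^z$, the purely antiholomorphic coefficients $\dbar^k E(z,z)$ vanish for $k<\ell$, and \emph{every surviving} coefficient $\partial^j\dbar^k E(z,z)$ with $j\ge1$ equals $\tfrac14\,\partial^{j-1}\dbar^{k}\Delta g_N(z)$. Since $\Delta g_N = 2\pi\sum_k\gamma_k\bigl(\phi_{\epsilon_k}(\cdot-z_k)-\phi(\cdot-z_k)\bigr)$ has genuinely compact support (the mollifiers are compactly supported, unlike their logarithmic potentials), these point values \emph{are} bounded by $C\bigl(\epsilon^{1-j-k}\vartheta_N(z)+1\bigr)$, and the factor $(x-z)^j$ with $j\ge1$ makes the quotient $E/(x-z)$ uniformly $O(\vartheta_N(z)+1)$ after summing the $\ell$ terms, with the Taylor remainder contributing only $O(\epsilon^{-1}\eta^\ell)=o(1)$. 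In short: the locality that produces $\vartheta_N(z)+1$ lives in $\Delta g_N$ and its derivatives, not in $\nabla^2 g_N$, and one must expand to order $\ell$ (not just invoke the mean value theorem once) to isolate those factors.
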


\begin{proof}
We fix $z\in\C$ and by definitions, 
\[ \begin{aligned}
K^\#_N[\chi_z f] (z)  & = \frac{N}{\pi}  \int   e^{ g_N(x)- \overline{\Upsilon_N^z(x-z)} }
\chi_z(x) f(x) e^{N(z-x) \overline{x} } \d^2x \\
& =   \int   e^{ g_N(x)- \overline{\Upsilon_N^z(x-z)} }
 \chi_z(x) f(x)  \dbar \left( e^{N(z-x) \overline{x} } \right) \frac{1}{z-x} \frac{\d^2x}{\pi} . 
\end{aligned}\]
By formula \eqref{cauchy}, since $\chi_z(z) =1$ and $\Upsilon_N^z(0) =g_N(z) \in\R$, we obtain
\[
K^\#_N[\chi_z f] (z)
= f(z)  - \int   \dbar \left(  e^{ g_N(x)- \overline{\Upsilon_N^z(x-z)} }\chi_z(x) f(x) \right)  e^{N(z-x) \overline{x} }  \frac{1}{z-x} \frac{\d^2x}{\pi} .
\]
Since  $f$ is analytic in $\D(z,2 \delta) = \supp(\chi_z)$, this implies that 
\begin{equation} \label{est8}
\begin{aligned} 
f(z) - K^\#_N[\chi_z f] (z)
= \mathfrak{Z}_N
+ \int   e^{ g_N(x)- \overline{\Upsilon_N^z(x-z)}} f(x) \dbar\chi_z(x)   \frac{ e^{Nz\overline{x} } }{z-x} e^{-2NQ(x)} \frac{\d^2x}{\pi} ,
\end{aligned}
\end{equation}
where we let
\begin{equation} \label{est10}
 \begin{aligned}
\mathfrak{Z}_N  &: = \int   \dbar \left(  e^{ g_N(x)-\overline{\Upsilon_N^z(x-z)}} \right)  \chi_z(x) f(x)   \frac{e^{N(z-x) \overline{x} } }{z-x} \frac{\d^2x}{\pi}   \\
&=  - \int_{ |x-z| \le 2\delta}  \hspace{-.3cm} \frac{\overline{ \partial  g_N(x) - \partial \Upsilon_N^z(x-z)}}{x-z} e^{ g_N(x)- \overline{\Upsilon_N^z(x-z)}}   \chi_z(x) f(x) e^{Nz\overline{x} } e^{-2NQ(x)} \frac{\d^2x}{\pi}  . 
\end{aligned}
\end{equation}
Using the Assumptions~\ref{ass:chi}, the second term on the RHS of  \eqref{est8} satisfies 
\begin{equation} \label{est7}
\left|\int   e^{ g_N(x)- \overline{\Upsilon_N^z(x-z)} } f(x) \dbar\chi_z(x) \frac{ e^{Nz\overline{x} } }{z-x} e^{-2NQ(x)}\frac{\d^2x}{\pi} \right| 
\le  \delta^{-2} \int_{\delta \le |x-z| \le 2\delta} \hspace{-.3cm} |f(x)| \  |e^{  g_N(x)-\Upsilon_N^z(x-z) }|\ e^{N\Re\{ z\overline{x} \}} e^{-2NQ(x)} \frac{\d^2x}{\pi}  . 
\end{equation}
Recall $\| \nabla^k g_N\|_\infty \le C \epsilon^{-k}$ for $k=1, \dots, \ell$ and we assume that $\eta= \delta/\epsilon  \le1/4$. Then, by Taylor's formula, if $|x-z| \le 2\delta$, 
 \begin{equation} \label{est11}
 e^{  g_N(x)-\Upsilon_N^z(x-z)} = e^{ g_N(x)/2 -  g_N(z)/2} e^{- \i\Im\{ \partial  g_N(z) (x-z) \}   + \O(\eta^2) } . 
 \end{equation}
This shows that  on the RHS of \eqref{est7}, 
 $|e^{  g_N(x)-\Upsilon_N^z(x-z) }| \le C e^{ g_N(x)/2 -  g_N(z)/2} $. 
 Moreover, by  rearranging \eqref{Q}, 
 \begin{equation}  \label{est15}
  \frac{g_N(x) -  g_N(z)}{2} - N \big(2Q(x) - \Re\{z\overline{x}\}  \big)=  -\frac N2 |z-x|^2 - NQ^*(z) - NQ^*(x), 
\end{equation}
which shows that
\[
\left|\int   e^{ g_N(x)-\overline{\Upsilon_N^z(x-z)}} f(x) \dbar\chi_z(x)  \frac{ e^{Nz\overline{x} } }{z-x} e^{-2NQ(x)} \frac{\d^2x}{\pi} \right|
 \le C \delta^{-2} e^{NQ^*(z)}
\int_{|x-z| \ge \delta} \hspace{-.3cm} |f(x)|  e^{-N |x-z|^2/2}  e^{-NQ^*(x)} \frac{\d^2x}{\pi} . 
\] 
By Cauchy--Schwartz inequality and \eqref{normQ}, we conclude that the second term on the RHS of \eqref{est8} is bounded by 
\begin{equation} \label{est9}
\left|\int   e^{ g_N(x)- \overline{\Upsilon_N^z(x-z)} } f(x) \dbar\chi_z(x) \frac{ e^{Nz\overline{x} } }{z-x} e^{-2NQ(x)} \frac{\d^2x}{\pi} \right| \le C  e^{NQ^*(z)}
\| f\|_{Q^*}  e^{- N \delta^2/2}  . 
\end{equation}
Here we used that $\delta^{-2} \le N$ and that for any $r>0$, 
\begin{equation} \label{est16}
   \int_{ |x-z| \ge r} \hspace{-.3cm}    
 e^{-N |x-z|^2 } \frac{\d^2x}{\pi} = N^{-1} e^{-N r^2}  . 
 \end{equation}

The RHS of \eqref{est9} will be negligible and it remains to control $\mathfrak{Z}_N$. 
Using again formulae \eqref{est11}--\eqref{est15} and taking the $|\cdot|$ inside the integral \eqref{est10}, we obtain
 \begin{equation} \label{est17}
|\mathfrak{Z}_N  |
\le   C e^{NQ^*(z)} \int_{ |x-z| \le 2\delta}  \bigg|  \frac{ \partial  g_N(x) - \partial \Upsilon_N^z(x-z)}{\overline{x-z}}  \bigg|   | f(x) |e^{- N|z-x|^2/2 } e^{-NQ^*(x)} \frac{\d^2x}{\pi}  , 
\end{equation}
where we used that $\| \chi_z\|_\infty \le 1$. 
Since the function $g_N$ is smooth, by Taylor's Theorem up to order $2\ell$, it holds for any $|u| \le 2\delta$, 
\[ \begin{aligned}
\partial  g_N(z+u) - \partial \Upsilon_N^z(u) 
&=   {\textstyle  \sum_{i=0}^{\ell} }  {\textstyle  \sum_{j=1}^{\ell} }  \M_{i,j} u^i \overline{u}^j \
\partial^{i+1} \dbar^j  g_N(z) 
+ \O\left( |u| \sup_{\ell \le k \le 2\ell}  \big\{  \| \nabla^{k+2}  g_N\|_\infty \delta^{k}\big\} \right) ,
\end{aligned}\]
where the coefficients $  \M_{i,j} >0$. 
Let us recall that $\| \nabla^{k}  g_N\|_\infty \le C \epsilon^{-k}$  for any integer $k\in[1,2\ell]$, $\eta =\delta/\epsilon$ is small and we fixed $\ell \in\N$ in such a way that the parameter $\eta^\ell \le N^{-1}$. 
In particular, we have constructed $\Upsilon_N^z$ in such a way that we deduce from the previous expansion that for any  $x\in \D(z,2\delta)$, 
\[
\partial  g_N(x) - \partial \Upsilon_N^z(u) 
= \frac{\overline{u}}{4}  {\textstyle  \sum_{i=0}^{\ell} }  {\textstyle  \sum_{j=0}^{\ell-1} }   \M_{i,j+1} u^i \overline{u}^{j} \
\partial^{i} \dbar^j (\Delta g_N)(z)  +\O(|u|N^{-2\alpha}) , \qquad \text{where } u = x-z 
\]
and we have used that $\partial \dbar g_N = \frac 14 \Delta g_N$. 
Using \eqref{g}, \eqref{psi} and the definition of $\vartheta_N$, it is straightforward to verify that for any integer $k\in [0,2\ell]$ and uniformly for all $z\in\C$,  
$| \nabla^k (\Delta g_N)(z)| \le C \epsilon^{-k} \vartheta_N(z) $. 
Hence, these estimates imply that uniformly  for all  $x\in \D(z,2\delta)$, 
\begin{equation} \label{est13}
\left| \frac{ \partial  g_N(x) - \partial \Upsilon_N^z(x-z)}{\overline{x-z}} \right| \le 
C  \vartheta_N(z)  +  \O\left(N^{-2\alpha}\right) . 
\end{equation}
Note that the error term is 0 if $z\notin \D$ since $g_N$ has compact support in $\D_{\epsilon_0}$. 
Therefore, by combining \eqref {est17} and \eqref{est13}, we conclude that   \begin{equation} \label{est12} 
|\mathfrak{Z}_N  |
\le   C N^{-1/2} \vartheta_N(z) e^{NQ^*(z)}  \| f\|_{Q^*} ,
 \end{equation}
where we have used the Cauchy--Schwartz inequality and \eqref{est16} with $r=0$. 
Finally, by combining the estimates \eqref{est9} and \eqref{est12}  with formula \eqref{est8}, this completes the proof.
\end{proof}

Our next Lemma is the counterpart of \cite[(4.12)]{AHM15}. The proof needs again to be carefully adapted but the general strategy remains the same as in \cite{AHM15} and relies on H\"ormander's inequality and the fact that  \eqref{kernel1} is the reproducing kernel of the Hilbert space $\mathscr{P}_N \cap L^2(e^{-2NQ^*})$.

\begin{lemma} \label{lem:2}
For any integer $\kappa\ge0$,  there exists $N_\kappa\in\N$  (which depends only on $R>0$, the mollifier $\phi$ and $n, \ell\in\N$)  such that if $N \ge N_\kappa$, we have  for all $z\in \D_{1-2\delta}$ and all $w\in\D(z,\delta)$, 
\[
\left| k^\#_N(w,z) -  K^*_N[\chi_z k^\#_N(\cdot,z)] (w)\right| \le N^{-\kappa}  e^{NQ^*(z) + N Q^*(w)} . 
\]
\end{lemma}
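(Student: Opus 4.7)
The plan is to construct a polynomial approximation to $\chi_z k^\#_N(\cdot,z)$ via H\"ormander's $\dbar$-method and show it coincides, up to super-polynomially small errors in the bulk, with the $L^2$-orthogonal projection $K^*_N[\chi_z k^\#_N(\cdot,z)]$. Set $F_0 := \chi_z k^\#_N(\cdot,z)$, which is smooth and compactly supported in $\D(z,2\delta_N)$. Since $k^\#_N(\cdot,z)$ is entire, $\dbar F_0 = (\dbar\chi_z) k^\#_N(\cdot,z)$ is supported on $A_z := \{\delta_N \le |x-z| \le 2\delta_N\}$. Using formula \eqref{K3} together with $\int_{|u|\ge\delta_N} \Phi_N(u)\d^2u = e^{-N\delta_N^2}$ and $\|\nabla\chi\|_\infty \le C$, one obtains $\|\dbar F_0\|_{Q^*}^2 \le CN e^{-N\delta_N^2} e^{2NQ^*(z)}$. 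Since $\Delta Q^* = 2 - \Delta g_N/(2N) \ge 1$ for $N \ge N_0$, H\"ormander's weighted $L^2$-estimate yields $v$ with $\dbar v = \dbar F_0$ and $\|v\|_{Q^*}^2 \le CN^{-1}\|\dbar F_0\|_{Q^*}^2 \le Ce^{-N\delta_N^2} e^{2NQ^*(z)}$. As $N\delta_N^2 = (\log N)^\beta$ with $\beta > 1$, this is super-polynomially small in $N$.

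Setting $h := F_0 - v$, $h$ is entire with $\|h\|_{Q^*} \le C\sqrt{N}\, e^{NQ^*(z)}$ (via the analogous $\|F_0\|_{Q^*}^2 \le CN e^{2NQ^*(z)}$). For $w \in \D(z,\delta_N)$, $\chi_z(w)=1$ gives $F_0(w) = k^\#_N(w,z)$, and
\[
k^\#_N(w,z) - K^*_N[F_0](w) = \bigl(h(w) - K^*_N[h](w)\bigr) + \bigl(v(w) - K^*_N[v](w)\bigr).
\]
Since $K^*_N[v]$ is a polynomial (hence entire), Lemma~\ref{lem:01} gives $|K^*_N[v](w)|^2 e^{-2NQ^*(w)} \le CN\|v\|_{Q^*}^2$, which is super-polynomially small. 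To bound $|v(w)|$ via the same Lemma we need $v$ analytic on $\D(w, 2/\sqrt N)$; this is arranged by re-doing the H\"ormander step with a slightly fatter radial cutoff $\tilde\chi_z$ equal to $1$ on $\D(z,3\delta_N/2)$ and supported in $\D(z,2\delta_N)$, giving an analogous $\tilde v$ analytic on a neighborhood of every $w \in \D(z,\delta_N)$ (since $2/\sqrt N \ll \delta_N$). The resulting discrepancy $K^*_N[(\chi_z - \tilde\chi_z) k^\#_N(\cdot,z)]$ is controlled pointwise by $C\sqrt{N}\,\|(\chi_z - \tilde\chi_z) k^\#_N(\cdot,z)\|_{Q^*} e^{NQ^*(w)}$ and is super-polynomially small because $(\chi_z - \tilde\chi_z)$ is supported on $A_z$ where the Gaussian integral contributes $e^{-N\delta_N^2}$.

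It remains to control $h(w) - K^*_N[h](w)$, that is, the tail of $h$ in the orthonormal polynomial basis $(p_k^*)_{k \ge 0}$ of $L^2(e^{-2NQ^*})$. Writing $h = \sum_{k \ge 0} c_k p_k^*$, Cauchy--Schwarz gives
\[
|h(w) - K^*_N[h](w)|^2 e^{-2NQ^*(w)} \le \|h\|_{Q^*}^2 \sum_{k\ge N} |p_k^*(w)|^2 e^{-2NQ^*(w)}.
\]
The main obstacle, and the crux of the argument, is the super-polynomial decay of the diagonal tail $\sum_{k \ge N} |p_k^*(w)|^2 e^{-2NQ^*(w)}$ for $w$ in the bulk. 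For the unperturbed Ginibre weight this tail equals $\tfrac{N}{\pi}\, \P[\mathrm{Poisson}(N|w|^2) \ge N]$, and Chernoff bounds give $\le CN e^{-NI(|w|^2)}$ with $I(r) = r - 1 - \log r \ge (1-r)^2/2$; since $z \in \D_{1-2\delta_N}$, $w \in \D(z,\delta_N)$ forces $|w| \le 1 - \delta_N$, whence the tail is $\le CN e^{-cN\delta_N^2}$. For the perturbed weight $Q^* = Q - g_N/(2N)$ with $g_N$ bounded and compactly supported in $\D_{1/2}$, the same bound persists, either via a perturbative comparison of $p_k^*$ with monomials or by invoking the pointwise bulk Bergman kernel asymptotics developed in \cite{AHM15}. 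Combined with $\|h\|_{Q^*} \le C\sqrt{N}\, e^{NQ^*(z)}$ this yields $|h(w) - K^*_N[h](w)| \le CN e^{-cN\delta_N^2/2} e^{NQ^*(z)+NQ^*(w)} \le \tfrac12 N^{-\kappa} e^{NQ^*(z)+NQ^*(w)}$ for $N \ge N_\kappa$, because $N\delta_N^2 = (\log N)^\beta$ with $\beta>1$ makes $e^{-cN\delta_N^2/2}$ beat any negative power of $N$.
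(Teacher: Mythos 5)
Your proof follows the general Hörmander/$\dbar$ philosophy, but diverges from the paper's argument in a way that leaves a genuine gap. The paper does \emph{not} solve the $\dbar$-problem in the target weight $L^2(e^{-2NQ^*})$. Instead it introduces the auxiliary weight $W(x) = N(\varphi(x)+1/2) + \log\sqrt{1+|x|^2}$, which has the decisive property that $W(x) = 2(N+1)\log|x| + o(1)$ at infinity, so the Bergman space $A^2(e^{-2W})$ is \emph{exactly} $\mathscr{P}_N$, the polynomials of degree $<N$. Consequently, the minimal $L^2(e^{-2W})$-solution $V$ of $\dbar V = \dbar f$ automatically satisfies $V - f \in \mathscr{P}_N$, so $f - V$ is a \emph{competitor} for the minimization defining $U := f - K^*_N[f]$, and the desired bound on $U$ follows by comparison. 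There is never any need to control degrees $\ge N$.

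Your version solves $\dbar v = \dbar F_0$ in $L^2(e^{-2NQ^*})$, which is essentially the Bargmann--Fock space: its entire functions of finite norm are not confined to degree $<N$. So $h = F_0 - v$ is entire but has no a priori degree bound, and you are forced to estimate the tail $\sum_{k \ge N}|p_k^*(w)|^2\,e^{-2NQ^*(w)}$. That estimate is the crux of your argument, and it is not supplied. The Ginibre case follows from the explicit Poisson tail, but for the perturbed weight you say only that "the same bound persists, either via a perturbative comparison of $p_k^*$ with monomials or by invoking the pointwise bulk Bergman kernel asymptotics developed in \cite{AHM15}." Neither route closes the gap: (i) $g_N$ is $N$-dependent with $\|\nabla^j g_N\|_\infty \sim \epsilon^{-j} = N^{j(1/2-\alpha)}$ blowing up, so $p_k^*$ is not a small perturbation of a monomial in any sense that would transport the Chernoff bound; (ii) the bulk asymptotics of \cite{AHM15} concern the \emph{truncated} kernel $k_N^*$, and using them to deduce the tail would be circular with Proposition~\ref{prop:approx}, which is precisely what Lemma~\ref{lem:2} feeds into. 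Writing the tail as $\big(\text{full Fock kernel of }Q^*\big) - u_N^*(w)$ exhibits the circularity explicitly: showing it is small is essentially equivalent to showing $u_N^*(w) \approx N/\pi$, which is the conclusion one is trying to reach. This is exactly the difficulty the paper's $W$-trick is designed to sidestep: by choosing a weight whose Bergman space is $\mathscr{P}_N$, the degree constraint is enforced for free, and only the norm transfer $L^2(e^{-2W}) \leftrightarrow L^2(e^{-2NQ^*})$ (which costs a harmless power of $N$, absorbed into $e^{-N\delta_N^2}$) remains.

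The rest of your manipulations (the annulus bound for $\|\dbar F_0\|$, the Hörmander estimate with $\Delta Q^* \ge 1$, the use of Lemma~\ref{lem:01} for pointwise control, the cutoff-fattening) are sound and closely parallel what the paper does inside its own argument, but without the weight-$W$ device they do not add up to a proof.
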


\begin{proof}
In this proof, we fix $N$ and  $z\in \D_{1-2\delta}$. We let $f := \chi_z k^\#_N(\cdot,z)$ where $\chi_z$ is as in Assumptions~\ref{ass:chi} and $W(x) := N \big(\varphi(x) +1/2\big) + \log\sqrt{1+|x|^2}$ where $\varphi$ is as in equations \eqref{varphi}--\eqref{EL}. 
Let also $V$ be the minimal solution   in $L^2(e^{-2W})$ of the problem $\dbar V= \dbar f$ and recall that H\"ormander's inequality, e.g. \cite[formula (4.5)]{AHM10}, for the  $\dbar$--equation states that
\[
\|V\|_{L^2(e^{-2W})}^2 \le 2 \int_\D \left| \dbar f(x)\right|^2 \frac{e^{-2W(x)}}{\Delta W(x)} \d^2x .
\]
 Here we used that $W$ is strictly subharmonic\footnote{Note that we have $\Delta \left( \log\sqrt{1+|x|^2} \right) = \frac{4}{(1+|x|^2)^2} >0$ for $x\in\C$.}. By \eqref{EL}, since $W(x) \ge N Q(x) $  and $\Delta W(x) \ge N \Delta Q(x) = 2N$ for all $x\in\D$, this implies that 
\[
\|V\|_{L^2(e^{-2W})}^2 \le N^{-1} \|\dbar f\|_{Q}^2  . 
\]
Moreover, by \eqref{varphi}, there exists a universal constant $c>0$ such that $W(x) \le N Q(x) + c $.
Therefore, we obtain
\begin{equation} \label{est3}
\|V\|_{Q}^2 \le e^{2c} N^{-1} \|\dbar f\|_{Q}^2  . 
\end{equation}

Recall that $Q^*= Q -\frac{g_N}{2N}$ where the perturbation $g_N$ is given by \eqref{g} and  satisfies  $\| g_N\|_\infty \le C \log \epsilon^{-1}$.
This implies that $L^2(e^{-2NQ^*}) \cong L^2(e^{-2NQ})$ with for any function $h\in L^2(e^{-2NQ})$:
\[
\epsilon^{C/2}\|h\|_{Q^*} \le \| h\|_{Q} \le \epsilon^{-C/2} \|h\|_{Q^*} . 
\]
By \eqref{est3}, this equivalence of norms shows that if $N\in\N$ is sufficiently large, 
\begin{equation} \label{est14}
\|V\|_{Q^*}^2 \le N^{C-1} \|\dbar f\|_{Q^*}^2  . 
\end{equation}

Observe that by \eqref{varphi}, $W(x) = (N+1) \log|x| +o(1)$ as $|x| \to +\infty$, hence  the Bergman space $A^2(e^{-2W})$ coincide with $\mathscr{P}_N$ and we must have $V- f \in \mathscr{P}_N$, see \eqref{polynom}. 

\medskip 

Now, we let $U$ to be the minimal solution in $L^2(e^{-2NQ^*})$ of the problem $U-f \in \mathscr{P}_N$. 
Since $U$ has minimal norm, \eqref{est14}  implies that 
\begin{equation} \label{est4}
 \|U\|_{Q^*}^2 \le N^{C-1} \|\dbar f\|_{Q^*}^2 . 
\end{equation}

Since $k^\#_N(\cdot,z)$ is analytic,  see \eqref{kapprox}, 
$\dbar f =  k^\#_N(\cdot,z) \dbar \chi_z$ and  according to the Assumptions~\ref{ass:chi},  
\[
 \|\dbar f\|_{Q^*}^2  \le C \delta^{-2} \int_{\delta \le |x-z| \le 2\delta} \hspace{-.3cm}|  k^\#_N(x,z) |^2 e^{-2NQ^*(x)} \d^2x . 
\]
Recall that $\eta = \delta/\epsilon \le 1/4$ and $\| \nabla^i g_N\| \le C \epsilon^{-i} $ for all $i=1,\dots, \ell$. 
Then by \eqref{kapprox} with $t=1$, it holds for all $z \in \C$ and $|x-z| \le 2\delta$, 
$\Upsilon_N^z(x-z) = g_N(z) + \O(\eta)$ so that
\[
| k^\#_N(x,z) |^2 \le C e^{-2 g_N(z)+ 2N  \Re\{ x \overline{z} \}} .
\]
By \eqref{est15} and using that $|g_N(x)-g_N(z)| =\O(\eta)$,  this shows that
\[
| k^\#_N(x,w) |^2 e^{-2NQ^*(x)} \le C e^{ 2N Q^*(z) - N |x-z|^2} .
\]
Then by \eqref{est16} and using that $\delta^{-2} \le N$, we obtain
\[ \begin{aligned}
 \|\dbar f\|_{Q}^2  & \le C \delta^{-2} e^{2NQ^*(z)}\int_{\delta \le |x-z| \le 2\delta}  \hspace{-.3cm} e^{- N |x-z|^2} \frac{\d^2x}{\pi} \\
 & \le   C e^{2NQ^*(z)} e^{-N \delta^2} . 
\end{aligned}\]
Combining the previous estimate with \eqref{est4}, we conclude that
\begin{equation} \label{est5}
\|U\|_{Q^*}^2 \le C  N^{C-1} e^{2NQ^*(z)} e^{-N \delta^2}. 
\end{equation}
We may now  turn \eqref{est5} into a pointwise estimate using Lemma~\ref{lem:01}. 
Note that both $f$ and $U$ are analytic\footnote{Here we used that $\chi_z=1$ on $\D(z;\delta)$ so that $\dbar f=0$ and that $\dbar f = \dbar U$ since $U-f \in\mathscr{P}_N$ by definition of $U$. } in $\D(z;\delta)$, this implies that for any $w\in \D(z;\delta)$
\begin{equation} \label{est6}
|U(w)|^2 e^{-2N Q^*(w)} \le C  N^{C}  e^{2NQ^*(z)}   e^{-N \delta^2}. 
\end{equation}
Since $k^*_N$ is the reproducing kernel of the Hilbert space $\mathscr{P}_N \cap L^2(e^{-2NQ^*})$, see  \eqref{kernel1}, it is well known that minimal solution $U$ is given by
\begin{equation*} 
U = f - K^*_N[f] . 
\end{equation*}
Conseqently, as  $f = \chi_z k^\#_N(\cdot,z)$ and $\chi_z =1$ on $\D(z;\delta)$, by \eqref{est6},  we conclude that for any $w\in \D(z;\delta)$, 
\[
\left| k^\#_N(w,z) -  K^*_N[\chi_z k^\#_N(\cdot,z)] (w) \right| \le C  N^{C/4} e^{NQ^*(z) + NQ^*(w)} e^{- N \delta^2/2} . 
\]
Since $e^{N \delta^2}$ grows faster than any power of $N\in\N$, this completes the proof. 
\end{proof}

We are now ready to give the proof of our main approximation for the correlation kernel $K^*_N$, see \eqref{kernel2}.

\subsection{Proof of Proposition~\ref{prop:approx}} \label{sect:BA}

We apply Lemma~\ref{lem:1}  to the function $f(x) = k^*_N(x,w)$ which is analytic for $x\in\C$ with norm
\[
\| f \|_{Q^*}^2 =  k^*_N(w,w) = u^*_N(w) e^{2N Q^*(w)} , 
\]
by the reproducing property. Hence, we obtain for any  $z\in\D$ and $w\in\C$, \begin{equation*}
\left|  k^*_N(z,w) -K^\#_N[\chi_z k^*_N(\cdot,w)] (z) \right| \le C \vartheta_N(z)\sqrt{\tfrac{u^*_N(w)}{N}} e^{NQ^*(z) + N Q^*(w)}    . 
\end{equation*}
By Lemma~\ref{lem:02}, this shows that 
\begin{equation} \label{est1}
\left|  k^*_N(z,w) -K^\#_N[\chi_z k^*_N(\cdot,z)]  (w) \right| 
\le  C\vartheta_N(z)  e^{NQ^*(z) + N Q^*(w)}    . 
\end{equation}
Recall that by \eqref{Kapprox}, we have
\[
K^\#_N[\chi_z k^*_N(\cdot,w)] (z) =  \int  \overline{k^\#_N(x,z)} \chi_z(x) k^*_N(x,w) e^{-2N Q^*(x)} \d^2x ,
\]
so that
\[
\overline{K^\#_N[\chi_z k^*_N(\cdot,w)] (z)} =  \int k^\#_N(x,z) \chi_z(x)  \overline{k^*_N(x,w)} e^{-2N Q^*(x)} \d^2x
= K^*_N[\chi_z k^\#_N(\cdot,z)]  (w) .
\]
Then, since the kernel $k^*_N$ is Hermitian,   it follows from the bound \eqref{est1} that  for any  $z, w\in\D$, 
\begin{equation} \label{est2}
\left|  k^*_N(w,z) -K^*_N[\chi_z k^\#_N(\cdot,z)]  (w) \right| 
\le  C\vartheta_N(z)  e^{NQ^*(z) + N Q^*(w)}  . 
\end{equation}
Finally, by Lemma~\ref{lem:2}, we conclude that for any $z\in \D_{1-2\delta}$ and all $w\in\D(z,\delta)$, 
\[
\left|k^*_N(w,z) - k^\#_N(w,z) \right| \le   C\vartheta_N(z)  e^{NQ^*(z) + N Q^*(w)} .
\]
\end{document}